\let\mathbb\undefined
\patchcmd{\part}{\bfseries}{\scshape\Large}{}{}
\numberwithin{section}{part}
\newtheorem{definition}{Definition}[section]
\newtheorem{lemma}[definition]{Lemma}
\newtheorem{proposition}[definition]{Proposition}
\newtheorem{corollary}[definition]{Corollary}
\newtheorem{assumption}[definition]{Assumption}
\newtheorem{theorem}[definition]{Theorem}
\newtheorem*{notation}{Notation}
\theoremstyle{remark}
\newtheorem{remark}[definition]{Remark}
\title{Eisenstein degeneration of Euler systems}
\newcommand{\into}{\hookrightarrow}
\newcommand{\ZZ}{\mathbf{Z}}
\newcommand{\QQ}{\mathbf{Q}}
\newcommand{\CC}{\mathbf{C}}
\newcommand{\Qp}{\QQ_p}
\newcommand{\Qpb}{\overline{\QQ}_p}
\newcommand{\Qb}{\overline{\QQ}}
\newcommand{\Zp}{\ZZ_p}
\newcommand{\cF}{\mathcal{F}}
\newcommand{\cO}{\mathcal{O}}
\newcommand{\cR}{\mathcal{R}}
\newcommand{\cH}{\mathcal{H}}
\newcommand{\cW}{\mathcal{W}}
\newcommand{\hf}{{\mathbf{f}}}
\newcommand{\hg}{{\mathbf{g}}}
\newcommand{\hh}{{\mathbf h}}
\newcommand{\hj}{{\mathbf j}}
\newcommand{\hk}{{\mathbf k}}
\newcommand{\fp}{\mathfrak{p}}
\newcommand{\Lp}{{\mathscr{L}_p}}
\newcommand{\Dcris}{\mathbf{D}_{\mathrm{cris}}}
\renewcommand{\ge}{\geqslant}
\renewcommand{\le}{\leqslant}
\DeclareMathOperator{\GL}{GL}
\DeclareMathOperator{\Ad}{Ad}
\DeclareMathOperator{\Fil}{Fil}
\DeclareMathOperator{\Frob}{Frob}
\DeclareMathOperator{\Gal}{Gal}
\DeclareMathOperator{\crit}{crit}
\DeclareMathOperator{\Hom}{Hom}
\DeclareMathOperator{\ac}{ac}
\DeclareMathOperator{\loc}{loc}
\newcommand{\eps}{\varepsilon}
\newcommand{\rig}{\mathrm{rig}}
\newcommand{\sD}{\mathscr{D}}
\newcommand{\sV}{\mathscr{V}}
\newcommand{\et}{\text{\textup{\'et}}} 
\begin{document}

\begin{abstract}
We discuss the theory of Coleman families interpolating critical-slope Eisenstein series. We apply it to study degeneration phenomena at the level of Euler systems. In particular, this allows us to prove relations between Kato elements, Beilinson--Flach classes and diagonal cycles, and also between Heegner cycles and elliptic units. We expect that this method could be extended to construct new instances of Euler systems.
\end{abstract}
\renewcommand{\urladdrname}{\itshape ORCID}

\author{David Loeffler}
\address[D.L.]{Mathematics Institute, University of Warwick, Coventry, UK}
\curraddr{UniDistance Suisse, Schinerstrasse 18, 3900 Brig, Switzerland}
\email{david.loeffler@unidistance.ch}
\urladdr{\href{https://orcid.org/0000-0001-9069-1877}{\upshape\path{0000-0001-9069-1877}}}

\author{\'Oscar Rivero}
\address[O.R.]{Mathematics Institute, University of Warwick, Coventry, UK}
\curraddr{Department of Mathematics, Universidade de Santiago de Compostela, Spain}
\email{oscar.rivero@usc.es}
\urladdr{\href{https://orcid.org/0000-0001-9348-3651}{\upshape\path{0000-0001-9348-3651}}}

\thanks{Supported by ERC Consolidator grant ``Shimura varieties and the BSD conjecture'' (D.L.) and Royal Society Newton International Fellowship NIF\textbackslash R1\textbackslash 202208 (O.R.)}

\subjclass[2010]{11F80; 11F67.}

\keywords{Euler systems, Coleman families, critical slope, Eisenstein series}
 \maketitle

 \setcounter{tocdepth}{1}
 \tableofcontents
 \section*{Introduction}

  \subsection*{Overview} In the beautiful survey article \cite{BCDDPR}, Bertolini et al.~described several families of global cohomology classes arising from modular curves, including the Gross--Kudla--Schoen diagonal-cycle classes for a triple product of modular forms, and the Euler systems of Beilinson--Flach and Beilinson--Kato elements for two and for one modular form. They noted that the latter two constructions formally behave, in many ways, as if they were a ``degenerate case'' of the Gross--Kudla--Schoen classes with one or more of the cusp forms replaced by Eisenstein series. However, while this formal resemblance has proved very informative as a heuristic to guide the development of the theory, making it into a rigorous mathematical statement is not straightforward: since the Gross--Kudla--Schoen cycle extends to the triple product of the compactified modular curves $X_1(N)$, the projection of this cycle to a non-cuspidal Hecke eigenspace is zero.

  In this work, we develop an approach which allows us to make this ``Eisenstein degeneration'' of Euler systems into a rigorous theory. Our approach is based on two ingredients: the theory developed in \cite{loefflerzerbes16} to study the variation of Euler systems in Coleman families; and the existence of \emph{critical-slope Eisenstein series}, which are Eisenstein series arising as specialisations of Coleman families which are generically cuspidal.

  Using this method, we show the following: if $\hf$ is a Coleman family passing through a critical-slope Eisenstein series $f_\beta$, and $\hg, \hh$ are (cuspidal) Coleman families, then the process of ``specialisation at $f_\beta$'' relates Euler systems in the following way:
  \begin{enumerate}[(i)]
  \item The Beilinson--Flach classes for $\hf \times \hg$ go to a multiple of the Beilinson--Kato classes for $\hg$. This setting is studied in Section \ref{sect:BFdef}.
  \item The triple-product classes for $\hf \times \hg \times \hh$ are sent to the Beilinson--Flach classes for $\hg \times \hh$. This scenario is presented in Section \ref{sect:diag-def}.
  \item The Heegner classes for $\hf \times \lambda$, where $\lambda$ is a (suitable) Gr\"ossencharacter of an imaginary quadratic field, go to the elliptic-unit classes for $\lambda$. This is the content of the final section \ref{sect:heeg-def}.
  \end{enumerate}

 More precisely, in each case, we show that the image of the Euler system for the ``larger'' family is an Euler system class for the ``smaller'' family, multiplied by an additional, purely local ``logarithm'' term (and also by an extra $p$-adic $L$-function factor in case (ii), which can be naturally interpreted in terms of the Artin formalism for $L$-functions).

 Besides the intrinsic interest in relating these natural and important objects to each other, we hope that the techniques that we develop here may be of use in constructing new Euler systems, as in forthcoming work of Barrera et al.~discussed below. We hope to pursue this further in a future work.

We have also included an extensive section (Part B) devoted to recalling the main points in the theory for each of the Euler systems discussed in this note. We hope that this could help to reconcile the notations used in different papers, and could help as a guide to the less experienced reader willing to gain expertise in the area. Some of the results we present have been proved under certain simplifying assumptions, like considering tame level 1 for the degeneration of diagonal cycles to Beilinson--Flach classes; or restricting to class number 1 when discussing how to recover elliptic units beginning with Heegner points. Our purpose was illustrating our method without dealing with certain technical difficulties, but of course most of these hypotheses can be removed with some extra work.

\subsection*{Relation to other work} There are a number of prior works studying the specialisation of families of $p$-adic $L$-functions and/or Euler systems at points of the eigencurve corresponding to \emph{cuspidal} points of critical slope; see for instance \cite{loefflerzerbes12} and rather more recently \cite{buyukboduk-pollack-sasaki}.

In a slightly different direction, one can consider families of cusp forms degenerating to \emph{weight one} Eisenstein series, which exist when the Eisenstein series is non-$p$-regular. This approach has the advantage that the resulting families are ordinary, but on the other hand, the eigencurve is not smooth (or even Gorenstein) at such points; its local geometry has been studied in detail by Pozzi \cite{pozzi-thesis}. A forthcoming work of Barrera, Cauchi, Molina and Rotger will use this approach, applied to diagonal cycles on triple products of quaternionic Shimura curves, in order to define global Galois cohomology classes associated to products of one or two Hilbert modular forms. (This could potentially compensate for the fact that the original construction of the Beilinson--Flach and Beilinson--Kato Euler systems does not generalise to the Hilbert case, owing to the lack of suitable Eisenstein classes.) It will be interesting to explore the relation between their techniques and ours once their manuscript becomes available.

Finally, a recent work of Bertolini, Darmon and Venerucci \cite{BDV} proves a striking conjecture of Perrin-Riou relying on a comparison between Beilinson--Flach elements associated to weight 1 Eisenstein series and Beilinson--Kato elements, showing the strength of this kind of techniques.

\subsection*{Acknowledgements} The authors would like to thank Massimo Bertolini, Kaz\^im B\"uy\"ukboduk, Marco Seveso, Rodolfo Venerucci, and Sarah Zerbes for informative conversations related to this work, and Victor Rotger for his feedback on an earlier draft. We also thank the anonymous referees for a very careful reading of the text, whose comments notably contributed to improve the exposition of the article.
\clearpage

\part{Critical-slope Eisenstein series and their Galois modules}

 \section{Notations}

  We introduce notation for Eisenstein series, following the conventions of \cite{bellaichedasgupta15}. Let $\psi, \tau$ be two primitive Dirichlet characters of conductors $N_1, N_2$, and let $N = N_1 N_2$. Let $r \ge 0$ be such\footnote{We use $r$ for the weight of our Eisenstein series, rather than the more conventional $k$, since $k$ will later be the weight of a generic form in a family passing through the Eisenstein point (so $k$ may or may not be equal to $r$).} that $\psi(-1)\tau(-1) = (-1)^r$. If $r = 0$, assume $\psi$ and $\tau$ are not both trivial.

  \begin{definition}\label{def:Eis}
   We write $f = E_{r+2}(\psi, \tau)$ for the (classical) modular form of level $N$ and weight $r+2$ given by
   \[
    E_{r+2}(\psi, \tau) =  (*) + \sum_{n \ge 1} q^n \sum_{\substack{n = d_1 d_2 \\ (N_1, d_1) = (N_2, d_2) = 1}} \psi(d_1) \tau(d_2) d_2^{r+1},
   \]
   where $(*)$ is the appropriate constant defined e.g in \cite[\S1]{bellaichedasgupta15}. In particular we have $a_\ell(f) = \psi(\ell) + \ell^{r+1} \tau(\ell)$ for primes $\ell \nmid N$.
  \end{definition}

  We have two eigenforms of level $\Gamma_1(N) \cap \Gamma_0(p)$ associated to $E_{r+2}(\psi, \tau)$: one ordinary and one critical-slope, with eigenvalues
  \[ \alpha \coloneqq \psi(p) \qquad\text{and}\qquad \beta \coloneqq p^{r+1} \tau(p)\]
  respectively. We denote these two eigenforms by $f_\alpha$ and $f_\beta$ respectively. Observe that the role of the characters $\psi$ and $\tau$ is not symmetric, and when studying the corresponding Galois representations we will see that $\tau$ is always twisted by a suitable power of the cyclotomic character.

  We shall want to study the $p$-adic Galois representations attached to these forms, for a prime $p \nmid N$. It will be helpful to make the following definition:

  \begin{definition}
   We say the Eisenstein series $f = E_{r+2}(\psi, \tau)$ is \emph{$p$-decent} if one of the following conditions holds:
   \begin{itemize}
    \item $r > 0$;
    \item $r = 0$, and for every prime $\ell \mid Np$, either the conductor of $\psi / \tau$ is divisible by $\ell$, or $(\psi/\tau)(\ell) \ne 1$.
   \end{itemize}
  \end{definition}

  \begin{remark}
   This is slightly stronger than Bella\"iche's definition of ``decent'' \cite[Definition 1]{bellaiche11a}, since we also make the assumption at $p$.
  \end{remark}

 \section{Galois representations}
  \label{sect:galrep}

  We first fix notation for Galois characters. Let $p$ be a prime (which will be fixed for the duration of this paper).

  \begin{notation} \
   \begin{itemize}
    \item We write $\Frob_\ell$ for an arithmetic Frobenius element at $\ell$ in $\operatorname{Gal}(\Qb / \QQ)$; this depends, of course, on the choice of an embedding of $\Qb$ into $\Qb_{\ell}$, and is well-defined modulo the inertia subgroup for this embedding.

    \item The $p$-adic cyclotomic character $\operatorname{Gal}(\Qb / \QQ) \to \Qp^\times$ will be denoted by $\eps$, so that $\eps(\Frob_\ell) = \ell$ for $\ell \ne p$.

    \item If $\chi$ is a Dirichlet character, then we interpret $\chi$ as a character $\Gal(\Qb / \QQ) \to \CC^\times$ unramified at all primes $\ell$ not dividing the conductor, and mapping $\Frob_\ell^{-1}$ to $\chi(\ell)$ for all such primes $\ell$.
   \end{itemize}
  \end{notation}

  Now let $\chi, \psi$ be Dirichlet characters, and $r \ge 0$ an integer, as in \cref{def:Eis}; and choose an embedding $\QQ(\psi, \tau) \hookrightarrow L$, where $\QQ(\psi, \tau) \subset \CC$ is the finite extension of $\QQ$ generated by the values of $\psi$ and $\tau$, and $L$ is a finite extension of $\Qp$. Hence we may regard the $q$-expansion coefficients $a_n(f)$ of $f = E_{r+2}(\psi, \tau)$ as elements of $L$.

  \begin{theorem}[Soul\'e]
   \label{thm:soule}
   If $f$ is a $p$-decent Eisenstein series, there are exactly three isomorphism classes of continuous Galois representations $\rho: G_\QQ \to \GL_2(L)$ which are unramified at primes $\ell \nmid Np$ and satisfy $\operatorname{tr} \rho(\operatorname{Frob}_\ell^{-1}) = a_\ell(f)$. These are as follows:
   \begin{enumerate}
    \item The semisimple representation $\psi \oplus \tau \eps^{-1-r}$.

    \item Exactly one non-split representation having $\tau \eps^{-1-r}$ as a subrepresentation. This representation splits locally at $\ell$ for every $\ell \ne p$, but does not split at $p$, and is not crystalline (or even de Rham).

    \item Exactly one non-split representation having $\psi$ as a subrepresentation. This representation splits locally at $\ell$ for every $\ell \ne p$, and is crystalline at $p$.
   \end{enumerate}
  \end{theorem}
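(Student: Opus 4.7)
The plan is to classify $\rho$ by first determining its semisimplification and then its extension class, reducing the latter to a computation in Galois cohomology controlled by Soulé's theorem.

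By Chebotarev, the characteristic polynomial of $\rho(\Frob_\ell^{-1})$ for $\ell \nmid Np$ must factor as $(T - \psi(\ell))(T - \tau(\ell)\ell^{r+1})$, so $\rho^{\mathrm{ss}} \cong \psi \oplus \tau\eps^{-1-r}$; the two summands are inequivalent as continuous characters of $G_\QQ$ because their Hodge--Tate weights at $p$ are $0$ and $1+r \ge 1$ respectively, and in particular distinct.

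Non-split representations with $\tau\eps^{-1-r}$, respectively $\psi$, as subrepresentation correspond up to isomorphism to $L^\times$-orbits of non-zero classes in
\[
V^+ \coloneqq H^1\bigl(\ZZ[1/Np],\, \psi^{-1}\tau\eps^{-1-r}\bigr) \quad \text{and} \quad V^- \coloneqq H^1\bigl(\ZZ[1/Np],\, \psi\tau^{-1}\eps^{1+r}\bigr).
\]
The key input is Soulé's theorem: the character appearing in $V^+$ is odd and has Tate weight $-(1+r) \le -1$, so $H^1(\ZZ[1/p], \psi^{-1}\tau\eps^{-1-r})$ is one-dimensional over $L$ and generated by the \'etale realisation of a cyclotomic class, which is crystalline at $p$ and unramified at every $\ell \ne p$. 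A local-global analysis at the primes $\ell \mid N$, controlled by the $p$-decency hypothesis (which forces the local $H^0$ of the relevant character to vanish there), shows that this subgroup exhausts $V^+$, hence $\dim_L V^+ = 1$; this produces the representation of case (3). For $V^-$, global Poitou--Tate duality paired against $V^+$, combined with the vanishing of the relevant $H^0$ and $H^2$, again gives $\dim_L V^- = 1$, while the Bloch--Kato rank bound on the positive-weight twist rules out the generator being crystalline at $p$, producing the representation of case (2).

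The local splitting at $\ell \ne p$ is inherited from the specific cyclotomic (respectively dual) nature of the constructed classes, which ensures that the restriction to each local decomposition group $G_{\QQ_\ell}$ lands in a cohomology subgroup killed once $p$-decency is imposed on the ramified primes and an unramified extension with distinct Frobenius eigenvalues is analysed at the remaining ones. The main obstacle in this program is Soulé's rank calculation, which we treat as a black box; modulo this, everything reduces to careful local-global bookkeeping in Galois cohomology, and the extra clause at $p$ in the definition of $p$-decent is precisely what is required to prevent spurious contributions at the primes of bad reduction -- in particular, it is what makes the argument go through uniformly in the delicate case $r = 0$.
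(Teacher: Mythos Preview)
The paper does not actually prove this theorem; it simply refers to \cite[\S 5.1]{bellaichechenevier06} and notes that the input is Soul\'e's cases of the Bloch--Kato conjecture. Your overall strategy---reduce to classifying extension classes in the two relevant $H^1$ groups, and use Soul\'e's theorem together with the global Euler characteristic and $p$-decency to pin down the dimensions---is exactly the argument one finds there.

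However, your execution has the two cases swapped. By your own bookkeeping, $V^+ = H^1(\ZZ[1/Np], \psi^{-1}\tau\eps^{-1-r})$ classifies extensions with $\tau\eps^{-1-r}$ as subrepresentation, i.e.\ case~(2), while $V^- = H^1(\ZZ[1/Np], \psi\tau^{-1}\eps^{1+r})$ classifies case~(3). The character in $V^-$ has Hodge--Tate weight $1+r \ge 1$, and it is \emph{this} space in which Soul\'e's cyclotomic elements live and furnish a crystalline generator; that is what produces the crystalline, locally-split extension of case~(3). Conversely, the character in $V^+$ has Hodge--Tate weight $-(1+r) \le -1$, so $H^1_{\mathrm{f}}(\Qp, \psi^{-1}\tau\eps^{-1-r}) = 0$, and hence the generator of $V^+$ cannot be crystalline at $p$---which is precisely the assertion of case~(2). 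Your sentence ``the Bloch--Kato rank bound on the positive-weight twist rules out the generator being crystalline at $p$'' has this exactly backwards: positive Hodge--Tate weight is where crystalline classes \emph{do} exist. Once you exchange the treatments of $V^+$ and $V^-$, the argument is correct.
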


  This follows from cases of the Bloch--Kato conjecture due to Soul\'e; see \cite[\S 5.1]{bellaichechenevier06} for the statement in this form. If $f$ is not $p$-decent, there will be extra representations in case (3), but we can repair the statement by only considering representations which are assumed to be unramified (resp.~crystalline) at each prime $\ell \ne p$ (resp.~at $\ell = p$) where the decency hypothesis fails.

  \begin{remark}
   The dual of the representation in (3) is the one that Bella\"iche describes as the ``preferred representation'' associated to $f$ \cite[Lemma 2.10]{bellaiche11a}.
  \end{remark}

 \section{Classical cohomology}

  For an $L$-vector space $M$ with an action of the Hecke algebra of level $N$, we let $M[T=f]$ signify the maximal subspace of $M$ on which the Hecke operators $T(\ell), U(\ell)$ act as $a_\ell(f)$. Dually, we let $M[T' = f]$ for the corresponding eigenspace for the dual Hecke operators $T'(\ell)$, $U'(\ell)$.

  Let $Y_1(N)$ denote the modular curve classifying\footnote{Throughout this discussion we shall assume $N \ge 4$, so that $Y_1(N)$ exists as a fine moduli space. The remaining cases can be dealt with by the usual trick of passing to the moduli space of elliptic curves with a point of order $N$ and an auxiliary full level $R$ structure, for some $R \ge 3$ coprime to $pN$, and taking invariants under the action of $\GL_2(\ZZ/ R)$ on the cohomology.} elliptic curves with a point of order $N$ (identified with a quotient of the upper half-plane via $\tau \leftrightarrow \left(\mathbf{C} / (\ZZ + \ZZ\tau), \tfrac{1}{N}\right)$, so the cusp $\infty$ is not defined over $\QQ$). We let $\overline{Y_1(N)}$ denote the modular curve over $\Qb$.

  \subsection{Etale cohomology}

   Let $\mathscr{H}_{\Qp}$ be the relative Tate module of the universal elliptic curve over $Y_1(N)$, and $\mathscr{H}_{\Qp}^\vee$ for its dual (the relative \'etale $H^1$). We write $\mathscr{V}_r = \operatorname{Sym}^r\left(\mathscr{H}_{\Qp}^\vee\right) \otimes_{\Qp} L$.

   \begin{proposition}
    Let $f = E_{r + 2}(\psi, \tau)$ as above. Then the eigenspaces
    \[ H^1_{c}(\overline{Y_1(N)}, \mathscr{V}_r)[T = f] \quad\text{and}\quad H^1(\overline{Y_1(N)}, \mathscr{V}_r)[T = f] \]
    are both one-dimensional over $L$, but the natural map between them is 0. The group $\Gal(\overline{\QQ} / \QQ)$ acts on the former by $\psi$, and on the latter by $\tau \varepsilon^{-1-r}$.
   \end{proposition}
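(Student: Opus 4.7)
The plan is to reduce everything to a computation on the boundary of $\overline{X_1(N)}$, by combining the excision long exact sequence with Eichler--Shimura to kill the cuspidal contribution.

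We first note that the image of $H^1_c \to H^1$ is the interior cohomology $H^1_!(\overline{Y_1(N)}, \mathscr{V}_r)$, which by Eichler--Shimura decomposes Hecke-equivariantly as a direct sum over cuspidal eigensystems. Since $f$ is Eisenstein, the eigenvalue $a_\ell(f) = \psi(\ell) + \ell^{r+1}\tau(\ell)$ exceeds the Ramanujan--Petersson bound for weight $r+2$ cusp forms at large $\ell \nmid N$, so no cuspidal eigensystem can agree with $f$ at all primes; hence $H^1_![T=f] = 0$, which immediately shows that the natural map $H^1_c[T=f] \to H^1[T=f]$ is identically zero.

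Next, the excision long exact sequence for the open inclusion $j \colon \overline{Y_1(N)} \into \overline{X_1(N)}$, combined with the standard unipotent-monodromy computation of $i_c^* Rj_* \mathscr{V}_r$ at each cusp $c$ (via the Tate curve description of the universal elliptic curve), produces a Hecke- and Galois-equivariant four-term exact sequence
\[
0 \to \bigoplus_c \mathscr{V}_r^{I_c} \to H^1_c(\overline{Y_1(N)}, \mathscr{V}_r) \to H^1(\overline{Y_1(N)}, \mathscr{V}_r) \to \bigoplus_c (\mathscr{V}_r)_{I_c}(-1) \to 0,
\]
for $r \ge 1$, with a small modification when $r = 0$ (where one must quotient by the trivial-eigensystem contribution, which the hypothesis that $\psi, \tau$ are not both trivial excludes from our $T=f$ eigenspace). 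Combined with the first step, $H^1_c[T=f]$ embeds in $\bigl(\bigoplus_c \mathscr{V}_r^{I_c}\bigr)[T=f]$ and $H^1[T=f]$ injects into $\bigl(\bigoplus_c (\mathscr{V}_r)_{I_c}(-1)\bigr)[T=f]$.

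It remains to explicitly compute the Hecke and Galois action on these boundary terms. A local calculation using the Tate curve shows that each $\mathscr{V}_r^{I_c}$ carries trivial intrinsic Galois action, while each $(\mathscr{V}_r)_{I_c}(-1)$ has intrinsic Galois action by $\eps^{-1-r}$. The cusps of $Y_1(N)$ are permuted by $\Gal(\QQ(\zeta_N)/\QQ) \cong (\ZZ/N)^\times$, and an explicit parametrisation (as in \cite{bellaichedasgupta15}) together with the double-coset formula for $T(\ell)$ on cusps then shows that the eigensystem $T(\ell) \mapsto \psi(\ell) + \ell^{r+1}\tau(\ell)$ occurs with multiplicity exactly one in each sum, on a line where the permutation Galois action contributes a twist by $\psi$ (resp.~by $\tau$), yielding the characters $\psi$ and $\tau \eps^{-1-r}$ as claimed. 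This final step -- pinning down which cusp produces which eigensystem and carefully tracking the Dirichlet twist arising from the permutation action -- is the main technical obstacle, though it is a well-documented classical computation.
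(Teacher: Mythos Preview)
The paper states this proposition without proof, treating it as a well-known structural fact about the cohomology of modular curves. Your approach --- kill the cuspidal contribution via Eichler--Shimura so that the $[T=f]$-eigenspaces are forced to the boundary, then compute the boundary pieces explicitly via the excision sequence and the Tate-curve description of $\mathscr{V}_r$ near each cusp --- is exactly the standard argument one would supply, and it is correct.

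Two small refinements. First, for the non-compactly-supported space you only show that $H^1[T=f]$ \emph{injects} into the boundary coinvariants $B^-[T=f]$, which you compute to be one-dimensional; this gives $\dim H^1[T=f] \le 1$. To obtain equality you also need the Hecke-equivariant surjection $H^1 \twoheadrightarrow B^-$ to remain surjective on the $[T=f]$-eigenline. This is immediate once you note that the Hecke action on the boundary is semisimple (it is a direct sum of one-dimensional characters indexed by pairs of primitive Dirichlet characters), so every eigenline in $B^-$ is hit. The analogous issue does not arise for $H^1_c$, where the boundary map goes \emph{into} $H^1_c$ and injectivity alone pins down $H^1_c[T=f] \cong B^+[T=f]$, as you argued.

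Second, invoking Ramanujan--Petersson to separate the Eisenstein eigensystem from cuspidal ones is correct but heavier than necessary: the decomposition $M_{r+2}(\Gamma_1(N)) = S_{r+2} \oplus \mathrm{Eis}_{r+2}$ is Hecke-stable and the eigensystems occurring in the two summands are disjoint (e.g.\ by multiplicity one, or simply because $f$ has nonzero constant term at some cusp), which already forces $H^1_![T=f]=0$.
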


   Note that $H^1_{c}(\overline{Y_1(N)}, \mathscr{V}_r)$ is exactly the space of \emph{modular symbols} $\operatorname{Symb}_{\Gamma_1(N)}(\mathscr{V}_r)$. We endow these spaces with Galois actions following the normalisations of \cite{KLZ17}.

   \begin{remark}\label{rmk:pstab}
    If we work instead with the modular curve $Y$ of level $\Gamma=\Gamma_1(N) \cap \Gamma_0(p)$, and use either of the two $p$-stabilised eigenforms $f_{\alpha}, f_\beta$, then the resulting spaces are isomorphic to those at level $N$ via degeneracy maps, so the Galois actions are the same as above. That is, for $? = \alpha$ or $\beta$, the spaces
      \[
       H^1_{c}(\overline{Y}, \mathscr{V}_r)[T = f_?] \quad\text{and}\quad H^1(\overline{Y}, \mathscr{V}_r)[T = f_?]
      \]
      are $1$-dimensional, and isomorphic as Galois representations to $\psi$ and $\tau \eps^{-1-r}$ respectively.
   \end{remark}

  \subsection{De Rham cohomology}\label{sect:dRcoh}

   We have similar results for de Rham cohomology: the eigenspaces
   \[ H^1_{\mathrm{dR}, c}(Y_1(N), \mathscr{V}_r)[T = f] \quad\text{and}\quad H^1_{\mathrm{dR}}(Y_1(N), \mathscr{V}_r)[T = f]\]
   are both 1-dimensional over $L$, but the former has its grading concentrated in degree 0, and the latter in degree $r+1$. Using the BGG complex (with and without compact supports, see \cite[Prop. 2.4.1]{LSZ-asai}), we can define classical coherent-cohomology eigenclasses
   \[
    \eta_f \in H^1\left(X_1(N)_{\overline{\QQ}}, \omega^{-r}(-\mathrm{cusps})\right)[T = f]\quad\text{and}\quad \omega_f \in H^1\left(X_1(N)_{\overline{\QQ}}, \omega^{r+2}\right)[T = f]
   \]
   whose images span the $f$-eigenspaces in $H^1_{\mathrm{dR}, c}$ and $H^1_{\mathrm{dR}}$ respectively; these are characterised, as usual, by the requirement that $\omega_f$ be the image of the differential form associated to $f$, and $\eta_f$ pair to 1 with $\omega_{f^*}$ under Serre duality. Here, $f^*$ is the eigenform conjugate to $f$.

   In general $\eta_f$ and $\omega_f$ will not be defined over $L$, only over $L(\mu_N)$. We can correct this by multiplying them by a suitable Gauss sum, to make them $L$-rational, giving modified classes $\tilde{\eta}_f$, $\tilde{\omega}_f$ spanning the corresponding spaces over $L$, exactly as in the cuspidal case considered in \S 6.1 of \cite{KLZ20}. However, for our purposes it is simpler to assume that $L$ is large enough that it contains an $N$-th root of unity (so the Gauss sum is in $L$ anyway), in which case $\eta_f$ and $\omega_f$ are $L$-rational. We shall assume that $L$ satisfies this condition henceforth.


 \section{Overconvergent cohomology}

  Let $\Gamma = \Gamma_1(N) \cap \Gamma_0(p)$, and $Y = Y(\Gamma)$, equipped with a model over $\QQ$ compatibly with the one above for $Y_1(N)$ (so $Y$ is the modular curve denoted $Y(1, N(p))$ in \cite{kato04}). For each integer $r \ge 0$ we have an \'etale sheaf $\sD_r$ on $Y$, corresponding to the representation of $\Gamma$ on the dual space of the Tate algebra in one variable $z$, with the action of $\Gamma$ twisted by $(a + cz)^r$. This also makes sense for $r < 0$ (and indeed for any locally analytic character of $\Zp^\times$).

  \begin{remark}
   This sheaf is the sheaf denoted $\mathcal{D}_{r, m}(\mathscr{H}_0)(-r)$ in \cite{loefflerzerbes16}, with $m$ an auxiliary parameter (radius of analyticity); we take $m = 0$ and drop it from the notations.
  \end{remark}

  These sheaves are normally considered as topological (Betti) sheaves, but they can be promoted to \'etale sheaves on the canonical model of $Y$ as a $\ZZ[1/Np]$-scheme; cf.~\cite{andreattaiovitastevens,loefflerzerbes16}. Note that the Hecke operators away from $p$ act on the cohomology of $\sD_r$, as does the operator $U(p)$; but $U'(p)$ does \emph{not} act on this sheaf.

  As in \cite[\S3.2]{bellaiche11a}, we have a specialisation morphism $\sD_r \xrightarrow{\rho_r} \mathscr{V}_r$, which fits into an exact sequence of sheaves on $Y$,
  \[ 0 \longrightarrow \sD_{-2-r}(-1-r) \xrightarrow{\theta^{r + 1}} \sD_{r} \xrightarrow{\rho_r} \mathscr{V}_r \longrightarrow 0, \]
  where $\rho_r$ is the natural specialisation map, $(-1-r)$ denotes a Tate twist, and $\theta^{r + 1}$ is the dual of $(r + 1)$-fold differentiation.

  \subsection{Compact support}

   A theorem due to Stevens (see \cite[Lemma 5.2]{pollackstevens13}) shows that $H^2_c(\overline{Y}, \sD_{-2-r}) = 0$, and of course $H^0_c(\overline{Y}, \mathscr{V}_r)$ is also zero, so we obtain a short exact sequence of compactly-supported \'etale cohomology spaces
   \begin{equation}
    \label{eq:sescoho}
    0 \longrightarrow H^1_c(\overline{Y}, \sD_{-2-r}(-1-r)) \xrightarrow{\theta^{r + 1}} H^1_c(\overline{Y},\sD_{r}) \xrightarrow{\rho_r} H^1_c(\overline{Y},\mathscr{V}_r) \longrightarrow 0.
   \end{equation}
   This is an exact sequence of \'etale sheaves on $\operatorname{Spec} \ZZ[1/Np]$ (equivalently, of representations of $\operatorname{Gal}(\Qb / \QQ)$ unramified outside $Np$). The map $\rho_r$ is compatible with the Hecke operators, while the map $\theta^{r+1}$ is Hecke-equivariant up to a twist: we have $T(\ell) \circ \theta^{r+1} = \ell^{r+1} \theta^{r+1} \circ T(\ell)$ for primes $\ell \nmid Np$, and similarly for $U(\ell)$ with $\ell \mid Np$.


   \begin{proposition}[{\cite[Theorem 1]{bellaiche11a}}]
    Let $f_\beta = E_{r+2}^{\crit}(\psi, \tau)$ be the critical-slope $p$-stabilisation of a $p$-decent Eisenstein series $E_{r+2}(\psi, \tau)$, as before. Then for each sign $\pm$, the eigenspace
    \[ H^1_c(\overline{Y}, \sD_r)[T = f_\beta]^{\pm}\]
    where complex conjugation acts by $\pm 1$ is one-dimensional.
   \end{proposition}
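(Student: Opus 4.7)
The plan is to realise the target space as the fibre at $x = f_\beta$ of a coherent sheaf on the Coleman--Mazur eigencurve $\mathcal{C}$, and then bound its dimension using the local geometry of $\mathcal{C}$ at $x$. By the theory of Ash--Stevens in the form developed in \cite{andreattaiovitastevens,loefflerzerbes16}, as the weight $\kappa$ varies over a small affinoid neighbourhood of $r$ in weight space $\mathcal{W}$, the compactly-supported overconvergent cohomology modules $H^1_c(\overline{Y}, \mathscr{D}_\kappa)$ interpolate into a coherent sheaf $\mathscr{M}$ on a neighbourhood of $x$ in $\mathcal{C}$; this decomposes under complex conjugation as $\mathscr{M}^{+} \oplus \mathscr{M}^{-}$, and the fibre $\mathscr{M}^{\pm}_x$ is canonically $H^1_c(\overline{Y}, \mathscr{D}_r)[T = f_\beta]^{\pm}$.

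Next, I would observe that $\mathscr{M}^{\pm}$ has generic rank one on $\mathcal{C}$: at any classical cuspidal non-critical point the fibre specialises, by Stevens' control theorem, to the corresponding classical modular-symbol eigenspace, which is one-dimensional for each sign by Eichler--Shimura. Assuming the eigencurve is smooth of dimension one at $x$, the local ring $\mathcal{O}_{\mathcal{C},x}$ is a discrete valuation ring, and a coherent torsion-free module of generic rank one over a DVR is free of rank one. Specialising at $x$ then gives $\dim_L \mathscr{M}^{\pm}_x = 1$, as desired. (The exact sequence \eqref{eq:sescoho} then retroactively shows that $\theta^{r+1}$ is injective on the corresponding eigenspaces, and accounts for the fact that the classical eigenspace $H^1_c(\overline{Y}, \mathscr{V}_r)[T = f_\beta]$ is concentrated in only one sign.)

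The main obstacle — the technical core of Bella\"iche's argument — is proving that $\mathcal{C}$ is indeed smooth at $x$ and that the weight map $\kappa : \mathcal{C} \to \mathcal{W}$ is \'etale there. The strategy is to identify the tangent space to $\mathcal{C}$ at $x$ (respectively, to the fibre of $\kappa$) with a group classifying trianguline first-order deformations of the pseudocharacter $\psi + \tau \eps^{-1-r}$ (respectively, those with fixed determinant), using Kisin's interpolation of crystalline periods and Chenevier's pseudocharacter description of the eigencurve. The $p$-decency hypothesis, together with Theorem~\ref{thm:soule} — i.e.\ Soul\'e's vanishing/uniqueness results for the Bloch--Kato Selmer groups $H^1_f(\QQ, \psi\tau^{-1}\eps^{r+1})$ and $H^1_f(\QQ, \tau\psi^{-1}\eps^{-1-r})$ — then forces these deformation spaces to attain the minimum possible dimension: one for $\mathcal{C}$ (accounting for the cyclotomic direction) and zero for the fibre of $\kappa$. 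This is exactly smoothness plus \'etaleness, and finishes the proof.
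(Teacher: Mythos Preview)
The paper does not give its own proof of this proposition; it is stated purely as a citation of \cite[Theorem~1]{bellaiche11a}. Your sketch is a faithful outline of Bella\"iche's argument there: interpolate the $\pm$-eigenspaces as fibres of the sheaf of overconvergent modular symbols over the eigencurve, establish smoothness of $\mathcal{C}$ at $x$ and \'etaleness of the weight map via the deformation theory of the reducible pseudocharacter combined with Soul\'e's Selmer-group input (this is precisely \cite[Proposition~2.11]{bellaiche11a}, which the present paper also cites a few paragraphs later), and conclude freeness of rank one from generic rank one over the resulting DVR.

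Two small points are glossed over and would be worth tightening. First, the fibre of $\mathscr{M}^{\pm}$ at $x$ is a priori the \emph{generalised} $f_\beta$-eigenspace, not the strict eigenspace; the correct order is ``generalised eigenspace is one-dimensional, hence coincides with the eigenspace, hence the eigenspace is one-dimensional''. Second, you invoke torsion-freeness of $\mathscr{M}^{\pm}$ without justification; this comes from the vanishing of $H^0_c(\overline{Y},\mathscr{D}_\kappa)$ (or dually from Stevens' lemma $H^2_c(\overline{Y},\mathscr{D}_\kappa)=0$, \cite[Lemma~5.2]{pollackstevens13}) via the long exact sequence for multiplication by a uniformiser on weight space. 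With these filled in, your proposal matches what the paper is citing.
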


   If $\varepsilon(f) = \psi(-1)$ as before, this shows that there is an ``extra'' eigenspace in the kernel of the classical specialisation map $\rho_r$, and complex conjugation acts on this space by $-\varepsilon(f)$.

   \begin{definition}
    We define
    \[ V^c(f_\beta) \coloneqq H^1_c(\overline{Y}, \sD_r)[T = f_\beta], \]
    which is a 2-dimensional representation of $\Gal(\Qb/\QQ)$.
   \end{definition}

   In practice we are interested in a more restrictive situation. For the following result, $M_{r + 2}^\dagger(\Gamma)_{(T = f_\beta)}$ stands for the generalised eigenspace of overconvergent modular forms associated to $f_{\beta}$.

   \begin{definition}[{\cite[Definition 2.14]{bellaiche11a}}]
    We say $f_\beta$ is \emph{non-critical} if the \emph{generalised} eigenspace of overconvergent modular forms $M_{r + 2}^\dagger(\Gamma)_{(T = f_\beta)}$ associated to $f_\beta$ is 1-dimensional.
   \end{definition}

   Note that here the notion of being \emph{critical} has to do, as in Bella\"iche, with the existence of a generalised eigenspace. In particular, as discussed in \cite[\S1.3, \S2.2.2]{bellaiche11a}, critical implies critical-slope, but not conversely. (Observe that in the notation $E_{r+2}^{\crit}(\psi, \tau)$, the superscript ``crit'' refers to this eigenform being critical-slope, but it is not necessarily critical.)

   \begin{theorem}[Bella\"iche--Chenevier, see {\cite[Remark 1.5]{bellaichedasgupta15}}]\label{equivalences}
    The following are equivalent:
    \begin{itemize}
     \item The form $f_\beta$ is non-critical.
     \item The localisation map
     \[ H^1_{\mathrm{f}}(\QQ, \psi \tau^{-1} \eps^{1+r}) \to H^1_{\mathrm{f}}(\Qp, \psi \tau^{-1} \eps^{1+r}) \]
     is non-zero.
     \item We have $L_p(\psi^{-1}\tau, r+1) \neq 0$, where $L_p$ denotes the Kubota--Leopoldt $p$-adic Dirichlet $L$-function.

     \item The Galois representation in case (3) of \cref{thm:soule} is not locally split at $p$.
    \end{itemize}
   \end{theorem}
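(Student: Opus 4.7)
The plan is to establish the cycle $(1) \Leftrightarrow (4) \Leftrightarrow (2) \Leftrightarrow (3)$, with each step connecting the Galois-theoretic and automorphic structure at the critical-slope Eisenstein point $f_\beta$.

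I would address $(1) \Leftrightarrow (4)$ first, using Bella\"iche's infinitesimal deformation analysis of the eigencurve at $f_\beta$. A non-classical overconvergent generalised eigenvector specialising to $f_\beta$ gives a non-trivial first-order deformation of its system of Hecke eigenvalues, and interpolating the associated pseudo-representations along the eigencurve produces an extension of $\tau\eps^{-1-r}$ by $\psi$ (or dually), unramified outside $Np$ and crystalline at all $\ell \neq p$. The trianguline refinement carried by the family at $p$ then forces the local behaviour at $p$ of this first-order deformation to be controlled by the splitting of the preferred representation from case (3) of \cref{thm:soule}: such a deformation exists exactly when that preferred representation fails to split locally at $p$. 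This identification is the core content of \cite[\S 3]{bellaiche11a}, combined with the dimension formula for the generalised eigenspace in terms of an $H^1_f$.

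For $(4) \Leftrightarrow (2)$, observe that by \cref{thm:soule} the extension class of the preferred representation lies in $H^1(\QQ, \psi\tau^{-1}\eps^{1+r})$, and in fact in $H^1_f$ (crystallinity at $p$ being built into case (3) and unramifiedness at the other primes being automatic under $p$-decency). Since $p$-decency forces $\dim_L H^1_f = 1$, this class generates the Bloch--Kato Selmer group, so the condition that the preferred representation not split at $p$ is literally the statement that the image of this generator under the localisation map is non-zero. Finally, $(2) \Leftrightarrow (3)$ is the Coates--Wiles/Coleman--Perrin-Riou explicit reciprocity law: the generator of $H^1_f(\QQ, \psi\tau^{-1}\eps^{1+r})$ is a Soul\'e twist of cyclotomic units, and Perrin-Riou's big logarithm identifies its image in $H^1_f(\Qp, \cdot)$ with a non-zero $p$-adic period times the value $L_p(\psi^{-1}\tau, r+1)$, up to an Euler factor at $p$ which is non-zero by the $p$-decency hypothesis.

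The main obstacle is the step $(1) \Leftrightarrow (4)$: it requires fine control of the trianguline refinement of the family passing through $f_\beta$ and its compatibility with the filtration on the preferred representation, together with the delicate verification that the non-split extension arising infinitesimally on the eigencurve coincides (as a local-at-$p$ extension class) with the one distinguishing case (3) from case (1) of \cref{thm:soule}. The remaining equivalences are essentially formal consequences of the Bloch--Kato formalism together with well-established Iwasawa-theoretic calculations for Dirichlet characters.
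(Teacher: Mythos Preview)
The paper does not actually give its own proof of this theorem: it is stated with attribution to Bella\"iche--Chenevier and a pointer to \cite[Remark 1.5]{bellaichedasgupta15}, and the text moves on immediately. So there is no ``paper's proof'' to compare against, only the argument implicit in the cited literature.

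Your sketch is a faithful reconstruction of that argument. The chain $(4) \Leftrightarrow (2)$ is exactly the observation that the extension class of the preferred representation generates the (one-dimensional, by $p$-decency and Soul\'e) global $H^1_f$, so local non-splitting at $p$ is equivalent to non-vanishing of the localisation map. The equivalence $(2) \Leftrightarrow (3)$ via the Perrin-Riou regulator / Soul\'e twist of cyclotomic units is standard, and your remark about the Euler factor being non-zero under $p$-decency is the right thing to check. The hard step $(1) \Leftrightarrow (4)$ is indeed Bella\"iche's infinitesimal analysis from \cite[\S 2--3]{bellaiche11a}, computing the dimension of the generalised eigenspace in terms of a refined (trianguline) Selmer group and identifying the obstruction with the local-at-$p$ splitting of the preferred extension.

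One point of phrasing to tighten: in your $(1) \Leftrightarrow (4)$ paragraph you write that the extra first-order deformation ``exists exactly when that preferred representation fails to split locally at $p$''. The direction is inverted: an extra generalised eigenvector (i.e.\ $f_\beta$ \emph{critical}) appears precisely when the preferred representation \emph{does} split locally at $p$, since the local splitting enlarges the relevant trianguline-deformation $H^1$. Equivalently, non-criticality $\Leftrightarrow$ non-splitting, as stated in the theorem. With this correction the sketch is sound.
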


   It is conjectured that these equivalent statements are always true (see the remarks \emph{loc.cit.}). The following partial result shows that they are true ``often'':

   \begin{proposition} \
    \begin{enumerate}[(i)]
     \item For any given $\psi$ and $\tau$, the Eisenstein series $E_{r+2}^{\crit}(\psi, \tau)$ is non-critical for all but finitely many integers $r \ge 0$ satisfying the parity condition $(-1)^r = \psi\tau(-1)$.

      \item If $r = 0$, and $\psi, \tau$ are such that the Eisenstein series $E_2^{\crit}(\psi, \tau)$ is $p$-decent, then this form is also non-critical.
    \end{enumerate}
   \end{proposition}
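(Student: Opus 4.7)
My plan is to reduce both parts to the third equivalent condition in \cref{equivalences}, namely the non-vanishing of the Kubota--Leopoldt value $L_p(\psi^{-1}\tau, r+1)$, and then to apply two different tools in the two cases: an elementary analyticity argument for (i), and Brumer's theorem on Leopoldt's conjecture in the abelian case for (ii).

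For part (i), I would use that $L_p(\psi^{-1}\tau, s)$ is an Iwasawa-analytic function of $s \in \Zp$ (with a possible pole only at $s=1$, which occurs only when $\psi = \tau$), and that it is not identically zero thanks to its interpolation property. By the Weierstrass preparation theorem, such a function admits only finitely many zeros in $\Zp$. Since the set of $r \ge 0$ satisfying the parity condition $(-1)^r = \psi\tau(-1)$ is infinite, at most finitely many of them can make $L_p(\psi^{-1}\tau, r+1)$ vanish. I expect this part to be essentially a formal consequence of known properties of the $p$-adic $L$-function.

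For part (ii), I would first observe that the $p$-decency hypothesis at $r = 0$ already forces $\psi \ne \tau$: if they were equal, the character $\psi/\tau$ would be trivial, with conductor $1$ and value $1$ at every prime, so the decency condition would fail at every $\ell \mid Np$. Hence $\chi \coloneqq \psi^{-1}\tau$ is a non-trivial even Dirichlet character (evenness coming from $\psi(-1)\tau(-1) = 1$). I would then appeal to Leopoldt's $p$-adic analogue of the class number formula, which, for $\chi_0$ a non-trivial even primitive Dirichlet character of conductor coprime to $p$, expresses $L_p(\chi_0, 1)$ as an explicit non-zero scalar times a $p$-adic linear combination of logarithms of cyclotomic units attached to $\chi_0$. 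Brumer's theorem, which establishes Leopoldt's conjecture for abelian extensions of $\QQ$ via Baker's theorem on linear independence of $p$-adic logarithms, implies the non-vanishing of this regulator.

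The main obstacle in (ii) will be the imprimitive-to-primitive comparison: what enters Bella\"iche--Dasgupta's formulation is the $L$-value of the possibly non-primitive character $\psi^{-1}\tau$, whereas Leopoldt's formula applies to the primitive version $\chi_0$. These differ by a finite product of correction Euler factors indexed by the primes $\ell \mid N$ not dividing the conductor of $\chi_0$, together with the Euler factor at $p$. The $p$-decency hypothesis is precisely tailored so that each of these correction factors is non-zero at $s=1$ (the condition at $p$ handling the factor at $p$, and the condition at $\ell \mid N$ handling the missing tame factors); once the conventions are pinned down, this bookkeeping should be routine, with the deep non-vanishing step being the Brumer--Leopoldt input for the primitive value.
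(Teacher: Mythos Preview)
Your proposal is correct and follows essentially the same approach as the paper: both parts are reduced to the $p$-adic $L$-function criterion from \cref{equivalences}, with (i) handled by the finiteness of zeros of a non-trivial Iwasawa-analytic function and (ii) by Leopoldt's conjecture for abelian fields via Brumer's theorem. Your extra observations (that $p$-decency forces $\psi\neq\tau$, and the imprimitive-to-primitive Euler-factor bookkeeping) are more explicit than the paper, which simply invokes the known non-vanishing $L_p(\chi,1)\neq 0$ for non-trivial even $\chi$ with a reference to Washington.
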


   \begin{proof}
    Part (i) follows from the second of the equivalent conditions of \cref{equivalences}: the $p$-adic $L$-function $L_p(\psi^{-1}\tau,s)$ is not identically 0 on the components of weight space determined by the parity condition, so it cannot vanish for infinitely many integers lying in these components.

    For part (ii) we use the fact that $L_p(\chi, 1) \ne 0$ for any non-trivial even Dirichlet character $\chi$, which is a consequence of the fact that Leopoldt's conjecture is known to hold for cyclotomic fields; see e.g.~\cite[Corollary 5.30]{washington97}.
   \end{proof}

   We assume henceforth that $f_\beta$ is non-critical.

   \begin{theorem}[{\cite[Theorem 4]{bellaiche11a}}] \
    \begin{itemize}
     \item Both generalised eigenspaces $H^1_c(\overline{Y}, \sD_r)_{(T = f_\beta)}^{\pm}$ are 1-dimensional over $L$.
     \item The $\varepsilon(f)$-eigenspace maps bijectively to its counterpart in $H^1_c(\overline{Y}, \mathscr{V}_r)$.
     \item The $-\varepsilon(f)$-eigenspace is isomorphic, via the $\theta^{r + 1}$ map, to the $[T = g]$ eigenspace in $H^1_c(\overline{Y}, \sD_{-2-r})^{\varepsilon(f)}$, where $g = E_{-r}^{\mathrm{ord}}(\tau, \psi)$ (note the reversal of the order of the characters) is the unique eigenform satisfying $\theta^{r + 1}(g) = f_\beta$.
    \end{itemize}
   \end{theorem}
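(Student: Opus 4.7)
The plan is to apply the short exact sequence \eqref{eq:sescoho} to the (generalised) $T = f_\beta$ eigenspaces and then decompose by complex-conjugation signs. Since the three sheaves carry compatible Hecke actions (with $\theta^{r+1}$ equivariant after the indicated twist), passing to generalised $f_\beta$-eigenspaces of the finite-slope parts preserves exactness and yields
\[
0 \to H^1_c(\overline{Y}, \mathscr{D}_{-2-r}(-1-r))_{(T = f_\beta)} \xrightarrow{\theta^{r+1}} H^1_c(\overline{Y}, \mathscr{D}_r)_{(T = f_\beta)} \xrightarrow{\rho_r} H^1_c(\overline{Y}, \mathscr{V}_r)[T = f_\beta] \to 0.
\]
The right-hand term is the classical cohomology eigenspace, which is $1$-dimensional with complex conjugation acting by $\varepsilon(f) = \psi(-1)$. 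On the left, the Tate twist multiplies Hecke eigenvalues by $\ell^{r+1}$, so the condition $T = f_\beta$ on the twisted sheaf translates into the condition $T = g$ on the untwisted $\mathscr{D}_{-2-r}$, where $g = E^{\mathrm{ord}}_{-r}(\tau, \psi)$ has Fourier coefficients $a_\ell(g) = \ell^{-r-1}\psi(\ell) + \tau(\ell)$.

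For the first bullet, I would use non-criticality to pin down the dimension of the middle term. By \cref{equivalences}, non-criticality is equivalent to the generalised overconvergent modular forms eigenspace at $f_\beta$ being $1$-dimensional; the Stevens-style comparison between overconvergent modular symbols and overconvergent modular forms (cf.~\cite{pollackstevens13}) then propagates this to $\dim H^1_c(\overline{Y}, \mathscr{D}_r)_{(T = f_\beta)}^{\pm} = 1$ for each sign, upgrading the $1$-dimensionality of the true eigenspaces (the preceding proposition of Bella\"iche) to the generalised case.

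For the remaining two bullets, I argue sign-by-sign in the displayed exact sequence. In the $+\varepsilon(f)$-part, both the middle and the right are $1$-dimensional, forcing the left piece to vanish and $\rho_r$ to be an isomorphism; this gives the second bullet. In the $-\varepsilon(f)$-part, the right vanishes (since the classical $1$-dimensional eigenspace lives purely in sign $+\varepsilon(f)$), so $\theta^{r+1}$ maps the $1$-dimensional left piece isomorphically onto the $1$-dimensional middle. Unwinding the Tate twist then identifies the source with a sign eigenspace of $H^1_c(\overline{Y}, \mathscr{D}_{-2-r})[T = g]$; the precise sign is computed by observing that complex conjugation on $\mathscr{D}_{-2-r}(-1-r)$ acquires an extra factor of $(-1)^{-1-r}$ relative to the untwisted sheaf, and simplifying using the parity relation $\psi(-1)\tau(-1) = (-1)^r$. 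The main technical obstacle is the propagation step from overconvergent modular forms to overconvergent cohomology at the level of \emph{generalised} eigenspaces (not merely true eigenspaces); this rests on the Hecke- and sign-equivariance of the Stevens evaluation map together with a slope-bounded control theorem comparing the characteristic power series on both sides.
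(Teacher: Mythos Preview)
The paper does not prove this theorem; it merely cites \cite[Theorem 4]{bellaiche11a}. So there is no in-paper argument to compare against.

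Your strategy for the second and third bullets is correct: once the first bullet is established, localising the exact sequence \eqref{eq:sescoho} at the generalised $f_\beta$-eigenspace and decomposing by sign immediately yields the other two, exactly as you describe, and your sign bookkeeping for the third bullet is on target.

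The substantive content is the first bullet, and here your proposal has a genuine gap. You correctly flag this as the ``main technical obstacle'', but the resolution you sketch---a ``Stevens-style comparison'' plus a ``control theorem comparing characteristic power series''---is too vague to constitute a proof, and the reference to \cite{pollackstevens13} does not supply what is needed: Pollack--Stevens develop overconvergent modular symbols, but do not prove the comparison of \emph{generalised} eigenspaces with overconvergent modular forms that you are invoking. In Bella\"iche's actual argument the passage goes through the eigencurve. Non-criticality in the modular-forms sense is equivalent to the cuspidal eigencurve being smooth and \'etale over weight space at $f_\beta$ (\cite[Proposition 2.11]{bellaiche11a}); one then shows that the local Hecke algebra acting on the slope-bounded part of $H^1_c(\overline{Y}, \mathscr{D}_r)^{\pm}$ over a neighbourhood in weight space coincides with this local ring of the eigencurve (an eigenvariety comparison in the spirit of Chenevier), and that each sign-component of the cohomology is \emph{free of rank one} over it. Specialising a rank-one free module over a regular local ring to the closed point gives a one-dimensional fibre, which is the first bullet. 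Your sketch gestures toward the Fredholm-series comparison but never articulates the freeness step, and that is where the real work lies; without it, knowing that the true eigenspaces are one-dimensional (the preceding proposition) does not by itself exclude nilpotents in the generalised eigenspace.
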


   \begin{proposition}\label{prop:SEScompact}
    There is a short exact sequence of $L$-vector spaces
    \[ 0 \longrightarrow \tau \eps^{-1-r} \longrightarrow V^c(f_\beta) \longrightarrow \psi \longrightarrow 0.\]
   \end{proposition}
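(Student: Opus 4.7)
My plan is to obtain the desired sequence by restricting \eqref{eq:sescoho} to the $[T=f_\beta]$-eigenspace and identifying the two outer terms with the help of the results already recorded above.

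First I would argue that taking $[T=f_\beta]$-eigenspaces in \eqref{eq:sescoho} is an exact operation in our setting: by Bella\"iche's theorem (the last theorem quoted, under the non-criticality hypothesis that is in force), the ordinary and generalised $f_\beta$-eigenspaces in $H^1_c(\overline{Y},\mathscr{D}_r)$ coincide, and the same is clear for the classical quotient. The same theorem further tells us that $\rho_r$ restricted to $V^c(f_\beta)$ sends the $\varepsilon(f)$-eigenspace isomorphically onto $H^1_c(\overline{Y},\mathscr{V}_r)[T=f_\beta]^{\varepsilon(f)}$, and that the $-\varepsilon(f)$-eigenspace lies entirely in the kernel of $\rho_r$. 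Transporting the classical-cohomology proposition from level $N$ to level $\Gamma$ via the degeneracy maps identifies that target with the one-dimensional Galois representation $\Qp(\psi)$, on which complex conjugation acts by $\psi(-1)=\varepsilon(f)$, so the full quotient is recovered. We thus obtain a short exact sequence
\[ 0 \to K \to V^c(f_\beta) \to \Qp(\psi) \to 0, \]
where $K\coloneqq V^c(f_\beta)^{-\varepsilon(f)}$ is a Galois-stable line.

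To identify $K$ I would invoke Soul\'e's theorem (\cref{thm:soule}). Because $V^c(f_\beta)$ is cut out from the \'etale cohomology of $\mathscr{D}_r$ on $Y$ regarded over $\ZZ[1/Np]$, it is unramified outside $Np$; and the Eichler--Shimura congruence relation for the $\mathscr{D}_r$-valued cohomology gives $\operatorname{tr}\Frob_\ell^{-1}=a_\ell(f_\beta)=a_\ell(f)$ at every unramified $\ell$. Thus $V^c(f_\beta)$ is isomorphic to one of the three classes listed by Soul\'e. The presence of $\Qp(\psi)$ as a quotient immediately rules out case~(3), in which $\Qp(\psi)$ appears only as a subrepresentation of a non-split extension. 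In the remaining cases~(1) and~(2), the one-dimensional subrepresentation is forced to be $\Qp(\tau)(-1-r)$, which completes the proof (and incidentally shows that $V^c(f_\beta)$ realises case~(2), the non-split extension that fails to be crystalline at $p$).

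The main obstacle, conceptually, is the Eichler--Shimura statement for the trace of $\Frob_\ell^{-1}$ on the overconvergent cohomology $V^c(f_\beta)$; everything else is sign-bookkeeping among the eigenspaces and classification via Soul\'e. This compatibility comes from the construction of $\mathscr{D}_r$ as a lisse \'etale sheaf on the canonical $\ZZ[1/Np]$-model of $Y$ as in \cite{andreattaiovitastevens,loefflerzerbes16}, and is what allows us to apply Soul\'e's theorem in the first place; once it is granted, the rest of the argument is formal.
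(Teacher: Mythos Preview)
Your argument is correct and reaches the conclusion, but it follows a genuinely different route from the paper's. The paper identifies the kernel $K$ directly: by the exact sequence \eqref{eq:sescoho} and Bella\"iche's theorem, $K$ is the $(-1-r)$-twist of the $H^1_c$-eigenspace attached to the non-classical ordinary Eisenstein series $g=E_{-r}^{\mathrm{ord}}(\tau,\psi)$, and the Galois action on this eigenspace is then computed by interpolation---placing it in the ordinary (Hida) Eisenstein family over weight space and reading off the character $\Qp(\tau)$ from the classical positive-weight specialisations via the control theorem. Your approach instead bypasses the Hida interpolation entirely and appeals to Soul\'e's trichotomy (\cref{thm:soule}) plus the Eichler--Shimura relation for the \'etale sheaf $\mathscr{D}_r$, concluding by elimination. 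The paper's method is more self-contained within the overconvergent framework already set up; yours is shorter once the Eichler--Shimura input is granted, and has the pleasant feature of tying the result directly to the classification the paper invokes anyway in the subsequent Remark.

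One minor caution: your parenthetical ``incidentally shows that $V^c(f_\beta)$ realises case~(2)'' overreaches. Your argument only pins down the representation as case~(1) or case~(2); ruling out the split case~(1) requires the separate input that the reducibility ideal of the family pseudo-character is exactly $\mathfrak{m}$ (Bella\"iche--Chenevier), which the paper brings in later. This does not affect the proposition itself, which only asserts the short exact sequence.
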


   \begin{proof}
    We know that the space $V^c(f_\beta)$ has a one-dimensional quotient isomorphic to $\psi$, by \eqref{eq:sescoho} and Remark \ref{rmk:pstab}. On the other hand, also by \eqref{eq:sescoho}, the kernel is isomorphic to the $(-1-r)$-th twist of the $H^1_c$ eigenspace associated to the non-classical ordinary $p$-adic Eisenstein series $g$ of weight $-r$. Since $f_\beta$ is non-critical, this space must be 1-dimensional. So it suffices to show that $\Gal(\Qb / \QQ)$ acts on the one-dimensional space $H^1_c(\overline{Y}, \sD_{-2-r})[T = g]$ via $\tau$.

    We now recall (see \cite[(2.3.10)]{ohta99} for example) that if $\mathcal{W}_{-2-r}$ denotes the component of weight space containing the integer $-2-r$, then there exists a $p$-adic family of Eisenstein series over $\mathcal{W}_{-2-r}$, whose specialisation in a classical weight $k \in \mathcal{W}_{-2-r} \cap \ZZ_{\ge 0}$ is the classical ordinary Eisenstein series $E_{k+2}^{\mathrm{ord}}(\tau, \psi)$, and whose specialisation at $-2-r$ is $g = E_{-r}^{\mathrm{ord}}(\tau, \psi)$.

    From the control theorem for \'etale cohomology in (possibly non-cuspidal) Hida families proved in \cite{ohta99} (which is also valid, with the same proof, for compactly-supported cohomology), it follows that there is an open disc $U$ around $-2-r$ in weight space, and a finite-rank free $\cO(U)$-module with an action of $G_{\QQ}$, whose specialisation at a weight $\kappa \in U$ is the eigenspace $H^1_c(\overline{Y}, \sD_\kappa)[T = E_{\kappa+2}^{\mathrm{ord}}(\tau, \psi)]$. By applying Remark \ref{rmk:pstab} to the specialisation at each integer $k \ge 0$ in $U$ (and using the fact that such points are Zariski-dense in $U$), we see that this module must be of rank 1 over $\cO(U)$, and $G_{\QQ}$ acts on it by the character $\tau$. Specialising back to $\kappa =  -2-r$ we obtain the result.
   \end{proof}

   \begin{remark}
    This shows that the isomorphism class of $V^c(f_\beta)$ must be one of the isomorphism classes (1) or (2) in \cref{thm:soule}. We shall see shortly that it is in fact non-split, so it lies in the isomorphism class (2), and is not de Rham at $p$.
   \end{remark}


  \subsection{Non-compact supports}

   We now consider the non-compactly-supported space, continuing to assume $f_\beta$ is $p$-decent and non-critical.

   \begin{definition}
    We define
    \[ V(f_\beta) = H^1(\overline{Y}, \sD_r)[T = f_\beta].\]
   \end{definition}

   There is a canonical map $H^1_c(\overline{Y}, \sD_r) \to H^1(\overline{Y}, \sD_r)$; its kernel is the space of \emph{boundary modular symbols}. This gives a map of Galois representations $V^c(f_\beta) \to V(f_\beta)$.

   \begin{proposition}[{\cite[Remark 5.10]{bellaichedasgupta15}}]
    The intersection of $V^c(f_\beta)$ with the boundary symbols is 1-dimensional, and is exactly the $\tau \eps^{-1-r}$ subrepresentation.
   \end{proposition}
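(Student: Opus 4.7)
The plan is to analyse the boundary-symbol intersection via the cusp decomposition and match the result against the short exact sequence for $V^c(f_\beta)$ established in the preceding proposition.

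Since the boundary of the compactification of $Y$ is zero-dimensional, the boundary-symbol subspace of $H^1_c(\overline Y, \mathscr{D}_r)$ is naturally identified (via the connecting map from $H^0(\partial)$) with a direct sum indexed by Galois orbits of cusps of $Y$, carrying compatible Hecke and Galois actions. At level $\Gamma_1(N) \cap \Gamma_0(p)$, each cusp of $Y_1(N)$ splits into two cusps distinguished by the action of $U(p)$: the ``ordinary'' cusps above $\infty$ have $U(p)$-eigenvalue $\alpha = \psi(p)$, while the ``critical'' cusps above $0$ have $U(p)$-eigenvalue $\beta = p^{r+1}\tau(p)$. Only the critical cusps can contribute to the $[T = f_\beta]$ eigenspace. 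A direct cusp-by-cusp calculation (of the kind carried out in \cite[\S5]{bellaichedasgupta15}) shows that, after imposing also the $T(\ell)$-eigenvalue condition $a_\ell(f_\beta)$, the boundary contribution to this eigenspace is exactly one-dimensional, supported on a single Galois orbit of critical cusps corresponding to the character $\tau$.

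One then computes the Galois action on this one-dimensional subspace: these critical cusps are defined over the cyclotomic extension cut out by $\tau$, producing the $\tau$-factor, and the sheaf $\mathscr{D}_r$ supplies an additional $\eps^{-1-r}$-twist coming from the $(a+cz)^r$-factor in its $\Gamma$-action and the associated Tate twist in the \'etale realisation. The resulting Galois character is therefore $\tau \eps^{-1-r}$. Since the short exact sequence
\[ 0 \longrightarrow \Qp(\tau)(-1-r) \longrightarrow V^c(f_\beta) \longrightarrow \Qp(\psi) \longrightarrow 0 \]
identifies $\Qp(\tau)(-1-r)$ as the unique one-dimensional subrepresentation of $V^c(f_\beta)$ with this character, the boundary-symbol intersection must coincide with it.

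The main obstacle is the explicit Galois computation at the critical cusps, since these are not rational over $\QQ$ and one must carefully decompose their Galois action as the product of a $\tau$-factor coming from the residue field in $\Gamma_1(N)$ and the $\eps^{-1-r}$-factor coming from the sheaf $\mathscr{D}_r$, compatibly with the conventions of \cref{sect:galrep}. A useful sanity check is provided by the $\theta^{r+1}$ map: $\ker \rho_r$ on $V^c(f_\beta)$ is the $\theta^{r+1}$-image of the $[T = g]$-eigenspace in $H^1_c(\overline Y, \mathscr{D}_{-2-r})$ with $g = E^{\mathrm{ord}}_{-r}(\tau,\psi)$, and since $g$ is an ordinary Eisenstein eigenform, Hida theory forces this eigenspace to be purely Eisenstein (i.e., contained in the boundary symbols of the weight $-r$ cohomology). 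The fact that $\theta^{r+1}$ is induced by a morphism of \'etale sheaves compatible with the compactification means that it preserves boundary symbols, giving the inclusion $\Qp(\tau)(-1-r) \subseteq V^c(f_\beta) \cap (\text{boundary})$ independently of the cusp calculation, and hence corroborating the output of the direct approach.
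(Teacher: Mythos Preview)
The paper does not give its own proof of this proposition: it is stated with a citation to \cite[Remark 5.10]{bellaichedasgupta15} and used as input, so there is no in-paper argument to compare against. Your sketch via the cusp decomposition of the boundary-symbol module, together with the Hecke and $U(p)$-eigenvalue analysis at the $\Gamma_0(p)$-cusps, is precisely the kind of computation carried out in the cited reference, and it leads to the correct conclusion.

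A couple of small cautions on the details. First, the sentence ``ordinary cusps above $\infty$ have $U(p)$-eigenvalue $\alpha=\psi(p)$, while critical cusps above $0$ have $U(p)$-eigenvalue $\beta=p^{r+1}\tau(p)$'' is morally right but slightly oversimplified: the $U(p)$-eigenvalue on a boundary symbol depends both on the cusp and on the particular distribution at that cusp, so the accurate statement is that the \emph{slope} of $U(p)$ on boundary symbols supported above $\infty$ is $0$ and on those above $0$ is $r+1$, which is what forces only the latter to contribute to the $f_\beta$-eigenspace. Second, your factorisation of the Galois action into a ``$\tau$ from the residue field'' piece and an ``$\eps^{-1-r}$ from the sheaf'' piece is heuristically correct but in practice these two contributions are not entirely disentangled; the clean way to pin down the character is exactly your final step, namely to invoke the already-established exact sequence for $V^c(f_\beta)$ and note that $\Qp(\tau)(-1-r)$ is its unique one-dimensional subrepresentation.

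Your sanity check via $\theta^{r+1}$ is a genuinely independent (and arguably cleaner) route to the inclusion $\Qp(\tau)(-1-r)\subseteq V^c(f_\beta)\cap(\text{boundary})$: it avoids any explicit Galois computation at cusps by transporting the question to the ordinary Eisenstein eigenspace in weight $-r$, where the boundary nature is a standard fact from Hida theory. Note, however, that this only gives one inclusion; to conclude that the intersection is \emph{exactly} one-dimensional you still need either the direct cusp count, or an argument that the $\Qp(\psi)$-quotient of $V^c(f_\beta)$ survives in $H^1(\overline{Y},\mathscr{D}_r)$ --- and the latter is essentially what the paper deduces \emph{from} this proposition rather than the other way around, so the cusp computation is not redundant.
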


   Hence both eigenspaces $H^1(Y, \sD_r)[T = f_\beta]^{\pm}$ must have dimension at least 1. In fact these dimensions are both exactly 1, and are equal to the corresponding generalised eigenspaces, since Bellaiche's argument in \cite[Theorem 3.30]{bellaiche11a} works for non-compactly-supported cohomology also. We deduce that there is a short exact sequence of $L$-linear Galois representations
   \begin{equation}\label{ses-eis}
   0 \longrightarrow \psi \longrightarrow V(f_\beta) \longrightarrow \tau \eps^{-1-r} \longrightarrow 0,
   \end{equation}
   so $V^c(f_\beta)$ and $V(f_\beta)$ have isomorphic semi-simplifications; but the natural map between them is not an isomorphism, but rather identifies the one-dimensional $\psi$-isotypic quotient of $V^c(f_\beta)$ given by \cref{prop:SEScompact} with the one-dimensional $\psi$-isotypic subspace of $V(f_\beta)$ given by \eqref{ses-eis}.

   \begin{remark}
    As with $V^c(f_\beta)$ above, we have not yet determined whether $V(f_\beta)$ is a split or non-split extension; so it could be either of the isomorphism classes (1) or (3) of \cref{thm:soule}, but in either case it is de Rham at $p$. We shall see shortly that it is in fact the non-split extension (3).
   \end{remark}

 \section{Duality and Atkin--Lehner}

  As in \cite{loefflerzerbes16}, there is a second family of sheaves $\sD'_r$ (denoted by the more verbose notation $\mathcal{D}_{r, m}(\mathscr{H}'_0)$ in \emph{op.cit.}), which also interpolates the standard finite-dimensional sheaves, but has an action of $U'(p)$ rather than $U(p)$. It is the sheaf $\sD'_r$ which is used for interpolating Euler system classes in families.

  \begin{remark}
   There is a canonical $L$-linear pairing between $\sD_r$ and $\sD'_r$, landing in the constant sheaf $L$; but it is not perfect in general (indeed, we shall see shortly that it does not induce a perfect duality on the fibre at a critical-slope Eisenstein point).
  \end{remark}

  The two sheaves are interchanged by the Atkin--Lehner involution, modulo a twist of the Galois action; so the above structural results for $\sD$ carry over \emph{mutatis mutandis} to $\sD'$. So, for $r \in \ZZ_{\ge 0}$, we have exact sequences of sheaves on $Y$
  \[
   0 \longrightarrow \sD'_{-2-r}(1 + r) \xrightarrow{\theta^{r + 1}} \sD_r' \xrightarrow{\rho_r} \mathscr{V}_r^* \longrightarrow 0,
  \]
  where $\mathscr{V}_r^*$ is the linear dual of $\mathscr{V}_r$ (isomorphic to the Tate twist $\mathscr{V}_r(r)$).

  \begin{definition}
   \label{def-vfbeta}
   For $f_\beta = E_{r+2}^{\crit}(\psi, \tau)$ as above, we define
   \[ V(f_\beta)^* = H^1(\overline{Y}, \sD_r'(1))[T' = f_\beta], \]
   which is a 2-dimensional $L$-linear representation of $\Gal(\Qb/\QQ)$ fitting into an exact sequence
   \begin{equation}
    \label{ses-coh2}
     0 \to \tau^{-1} \eps^{1 + r} \to V(f_\beta)^* \to \psi^{-1} \to 0.
   \end{equation}
   We define $V^c(f_\beta)^*$ as the analogous space with compactly-supported rather than non-compactly-supported cohomology, so that
   \begin{equation}
    \label{ses-coh2c}
     0 \to \psi^{-1} \to V^c(f_\beta)^* \to \tau^{-1} \eps^{1 + r} \to 0,
   \end{equation}
   and there is a natural map $V^c(f_\beta)^* \to V(f_\beta)^*$ whose image is the $\tau^{-1}\eps^{1+r}$ subrepresentation of the latter.
  \end{definition}

  \begin{remark}\label{rmk:notperfect}
   We have therefore defined four Galois representations $V(f_\beta)$, $V(f_\beta)^*$, $V^c(f_\beta)$, and $V^c(f_\beta)^*$ associated to $f_\beta$, all of which are 2-dimensional, with natural 1-dimensional invariant subspaces; and we have identified the Galois actions on their graded pieces.

   We shall see in the next section that all four representations are \emph{non-split} extensions. From this, it follows that $V(f_\beta)^*$ is isomorphic to the dual of $V(f_\beta)$ (with both being de Rham at $p$), while $V^c(f_\beta)^*$ is isomorphic to the dual of $V^c(f_\beta)$ (with both being non-de Rham at $p$), justifying our choice of notations. However, the pairing giving this duality is not quite the natural Poincar\'e duality pairing. What we obtain from a naive application of Poincar\'e duality is pairings
   \begin{equation}
    \label{eq:notperfect}
    V^c(f_\beta) \times V(f_\beta)^* \to L, \qquad V(f_\beta) \times V^c(f_\beta)^* \to L
   \end{equation}
   which are \emph{not} perfect, but rather factor through the 1-dimensional classical quotients of these 2-dimensional representations.
  \end{remark}

 \section{P-adic families}

  \subsection{Pseudocharacters}

   The form $f_\beta$ is an overconvergent cuspidal eigenform of finite slope, so it defines a point on the Coleman--Mazur--Buzzard cuspidal eigencurve $\mathcal{C}^0$ of tame level $N$. Moreover, our assumptions that $f$ be decent and non-critical imply that $\mathcal{C}^0$ is smooth at $f_\beta$ and locally \'etale over weight space (see Proposition 2.11 of \cite{bellaiche11a}); so we may choose a closed disc $U$ around $f_\beta$ which maps isomorphically to its image in weight space.

   We let $\hf$ be the universal eigenform over $U$, which is an overconvergent cuspidal eigenform with coefficients in $\cO(U)$, and weight $\hk + 2$ where $\hk: \Zp^\times \to \cO(U)^\times$ is the canonical character; by definition, the specialisation of $\hf$ at $r \in U$ is $f_\beta$. Let us write $\beta_{\hf} \in \cO(U)^\times$ for the $U(p)$-eigenvalue of $\hf$, so that $\beta_{\hf}(r) = \beta = p^{r+1} \tau(p)$.

   There is a canonical Galois pseudo-character $t_{\hf}: G_\QQ \to \cO(U)$ satisfying
   \[ t_{\hf}(\Frob_\ell^{-1}) = a_\ell(\hf)\]
   for good primes $\ell$. If $\mathfrak{m}$ is the maximal ideal of $\cO(U)$ corresponding to $r$, then $t_{\hf}$ is reducible modulo $\mathfrak{m}$ and its reduction is the sum of two distinct characters. Up to shrinking $U$, we have the following result.

   \begin{theorem}[{\cite[Theorem 1]{bellaichechenevier06}}]
    \label{thm:redideal}
    The reducibility ideal of the pseudo-character $t_{\hf}$ is exactly $\mathfrak{m}$.\qed
   \end{theorem}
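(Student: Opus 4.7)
The plan is to apply the general theory of $2$-dimensional residually-reducible pseudo-characters over local rings, developed by Bella\"iche--Chenevier. First I record that $A := \cO(U)$ is a discrete valuation ring with maximal ideal $\mathfrak{m}$: this follows because $U$ maps isomorphically to its image in weight space, itself a consequence of smoothness of the eigencurve and \'etaleness over weight space at $f_\beta$, already invoked via decency and non-criticality (cf.~\cite[Prop.~2.11]{bellaiche11a}). Writing $I_{\mathrm{red}} = \mathfrak{m}^n$ for some $n \in \{1, 2, \ldots, \infty\}$, the goal is to show $n = 1$.

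Ruling out $n = \infty$ is immediate. Classical cuspidal weights form a Zariski-dense subset of $U$ by Coleman's classicality theorem, and the Galois representations attached to classical cuspidal eigenforms are absolutely irreducible by Ribet, so $t_{\hf}$ cannot be globally the sum of two characters. Hence $I_{\mathrm{red}}$ is nonzero, and the substantive content is to show that $n$ cannot exceed $1$.

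To that end I would follow the GMA (generalised matrix algebra) formalism: to the pseudo-character $t_{\hf}$ with distinct residual characters $\chi_1 := \psi$ and $\chi_2 := \tau\eps^{-1-r}$ one attaches $A$-modules $B$ and $C$ measuring infinitesimal extensions of $\chi_2$ by $\chi_1$ and of $\chi_1$ by $\chi_2$ respectively, together with a product pairing $B \otimes_A C \to A$ whose image is exactly $I_{\mathrm{red}}$. These modules admit canonical injections
\[ B/\mathfrak{m}B \hookrightarrow H^1_{\ast}(\QQ, \psi\tau^{-1}\eps^{1+r}), \qquad C/\mathfrak{m}C \hookrightarrow H^1_{\dagger}(\QQ, \tau\psi^{-1}\eps^{-1-r}), \]
where $\ast, \dagger$ are Selmer conditions determined by the trianguline structure of the finite-slope family at $p$ and by unramifiedness outside $Np$. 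Under non-criticality, the triangulation of the $(\varphi, \Gamma)$-module of $\hf$ does not degenerate at $f_\beta$, and these conditions can be matched with Bloch--Kato-type conditions; Soul\'e's theorem (already invoked in \cref{equivalences}) then forces both Selmer groups to be one-dimensional over $L$. Combined with $B, C \ne 0$, this shows that $B$ and $C$ are cyclic $A$-modules of length $1$, so $I_{\mathrm{red}} = B \cdot C$ has length $1$ as an $A$-module, i.e., $I_{\mathrm{red}} = \mathfrak{m}$.

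The principal obstacle is the precise identification of the local conditions $\ast$ and $\dagger$ at $p$ with their Bloch--Kato analogues. Since $V^c(f_\beta)$ itself is not de Rham at $p$, this identification is delicate: one must interpolate the de Rham (indeed crystalline) local structure from the classical cuspidal specialisations of $\hf$ at weights $k' \ne r + 2$, using the Kedlaya--Pottharst--Xiao triangulation of the family of $(\varphi, \Gamma)$-modules over $U$, and then check that the first-order behaviour at $f_\beta$ controls precisely the Bloch--Kato part of the ambient cohomology rather than some strictly larger subspace. Non-criticality is exactly what prevents a degeneration here, and so is what makes the bound $n \le 1$ sharp.
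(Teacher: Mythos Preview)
The paper does not supply a proof; the statement is cited directly from \cite[Theorem~1]{bellaichechenevier06}, with a parenthetical remark that the argument there (written for tame level~$1$) carries over to $p$-decent Eisenstein series of arbitrary level. So there is no argument in the paper to compare against beyond this citation.

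Your outline follows the Bella\"iche--Chenevier strategy (GMA formalism together with the Soul\'e bounds on the relevant Selmer groups), but the final deduction is flawed. From $B/\mathfrak{m}B \hookrightarrow H^1_{\ast}$ and $C/\mathfrak{m}C \hookrightarrow H^1_{\dagger}$ with $1$-dimensional targets, together with $B, C \ne 0$, you can conclude only that $B$ and $C$ are \emph{cyclic} $A$-modules, not that they have length~$1$. In fact length~$1$ is impossible here: if $B \cong C \cong A/\mathfrak{m}$ then $B \otimes_A C \cong L$, and any $A$-linear map $L \to A$ has image in the $\mathfrak{m}$-torsion of $A$, which is zero since $A$ is a domain; this would force $I_{\mathrm{red}} = 0$, contradicting your own first step. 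In this setting $B$ and $C$ are torsion-free, hence free of rank~$1$ over the DVR $A$, and knowing their number of generators says nothing about which power of $\mathfrak{m}$ the image $BC \subset A$ is. The argument in \cite{bellaichechenevier06} does not bound $B$ and $C$ in this way; rather, it bounds the tangent space of the local ring of the eigencurve at $f_\beta$ directly, using that over $A/I_{\mathrm{red}}$ the two characters are rigidified (via the refinement at $p$ and the determinant condition) to lie in the image of the weight algebra, so that reducible first-order deformations contribute nothing beyond the weight direction, while the Ext-group bounds control the remaining contribution. This yields a one-dimensional tangent space, from which regularity and $I_{\mathrm{red}} = \mathfrak{m}$ follow simultaneously.
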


   (More precisely, this is proved in \cite{bellaichechenevier06} for $N = 1$. However, as noted in Remark 2.9 of \cite{bellaiche11a}, this is purely because a construction of the eigencurve of tame level $> 1$ was not available in the literature at the time \cite{bellaichechenevier06} was written, and all of the arguments of \emph{op.cit.}~go over without change to $p$-decent Eisenstein series of higher level.)

  \subsection{Families of representations}

   We now define two specific $\cO(U)$-linear Galois representations which both have traces equal to $t_{\hf}$, and two more whose traces are the dual $t_{\mathbf{f}}^*$. As shown in \cite[\S 4]{loefflerzerbes16}, there exist sheaves $\sD_U$ and $\sD'_U$ of $\cO(U)$-modules on $Y$, whose specialisations at any $\kappa \in U$ are the sheaves $\sD_\kappa$ and $\sD_\kappa'$ considered above.

   \begin{definition}
    \label{rep-hida}
    We define
    \begin{align*}
     V(\hf) &\coloneqq H^1_{\et}\left(\overline{Y}, \sD_U\right)[T = \hf] &
     V^c(\hf) &\coloneqq H^1_{\et,c}\left(\overline{Y}, \sD_U\right)[T = \hf] \\
     V(\hf)^* &\coloneqq H^1_{\et}\left(\overline{Y}, \sD'_U(1)\right)[T' = \hf] &
     V^c(\hf)^* &\coloneqq H^1_{\et,c}\left(\overline{Y}, \sD'_U(1)\right)[T' = \hf]
    \end{align*}
   \end{definition}

   It follows from the results of \cite{andreattaiovitastevens,loefflerzerbes16} that if $f_\beta$ is non-critical, then (up to possibly shrinking $U$) the module $V(\hf)$ is free of rank 2 over $\cO(U)$ and a direct summand of $H^1_{\et}(\overline{Y}, \sD_U)$, so we have a natural Hecke-equivariant map
   \[
    \operatorname{Pr}_{\hf}: H^1_{\et}(\overline{Y}, \sD_U) \to V(\hf);
   \]
   and $V(\hf)$ carries a $\cO(U)$-linear Galois representation whose trace is $t_{\hf}$. Since $t_{\hf}$ has maximal reducibility ideal, it follows that the fibre of $V(\hf)$ at $k=r$, which is exactly $V(f_\beta)$, must be a non-split extension. The same applies \emph{mutatis mutandis} to the other three modules, showing that all four of $V(f_\beta), V^c(f_\beta), V(f_\beta)^*$ and $V^c(f_\beta)^*$ are non-split extensions.

  \begin{corollary} \
   \begin{enumerate}[(i)]
    \item The isomorphism class of the representation $V(f_\beta)$ is the unique non-split, de Rham extension in case (3) of \cref{thm:soule}. The isomophism class of $V^c(f_\beta)$ is the non-de Rham extension in case (2) of the theorem.
    \item $V(f_\beta)^*$ is isomorphic to the dual of $V(f_\beta)$, and $V^c(f_\beta)^*$ to the dual of $V^c(f_\beta)$.
   \end{enumerate}
  \end{corollary}

 \subsection{Comparison of $\cO(U)$-lattices}\label{inclusions}

  There is a natural `forget supports' map $V^c(\hf) \to V(\hf)$, which is $\cO(U)$-linear. Since this map specialises to an isomorphism at any non-critical-slope classical point, it must be injective, with torsion cokernel; thus we may regard both $V^c(\hf)$ and $V(\hf)$ as $\cO(U)$-lattices in $V(\hf) \otimes_{\cO(U)} \operatorname{Frac} \cO(U)$. Shrinking our disc $U$ further if necessary, we may assume that the cokernel is supported at $\mathfrak{m}$.

  Since the map $V^c(f_\beta) \to V(f_\beta)$ is not the zero map, $V^c(\hf)$ is not contained in $\mathfrak{m} \cdot V(\hf)$. Thus we may find a basis $(e_1, e_2)$ of $V(\hf)$, and an integer $r \ge 1$, such that $(e_1, X^r e_2)$ is a basis of $V^c(\hf)$, where $X$ is a uniformizer of $\mathfrak{m}$. From \cref{thm:redideal}, we must in fact have $r = 1$. Thus $X V(\hf) \subset V^c(\hf)$, and we have a chain of inclusions
  \[ \dots \supset \tfrac{1}{X} V^c(\hf) \supset V(\hf) \supset V^c(\hf) \supset X V(\hf) \supset \dots \]
  with all of the successive quotients l-dimensional over $L$, and alternately equal to either $\psi$ or $\tau \varepsilon^{-1-r}$ as Galois modules. Similarly, we have a chain
  \begin{equation}\label{chain}
   \dots \supset \tfrac{1}{X} V^c(\hf)^* \supset V(\hf)^* \supset V^c(\hf)^* \supset X V(\hf)^* \supset \dots
  \end{equation}
  with quotients alternately isomorphic to either $\psi^{-1}$ or $\tau^{-1} \varepsilon^{1+r}$.

 \subsection{Duality}

  We recall the construction of the ``Ohta pairing'' from \cite[\S 4.3]{loefflerzerbes16}, which is a $\cO(U)$-bilinear pairing on  $\sD_U \times \sD'_U$, taking values in the constant sheaf $\cO_U$. This gives rise to a pairing of Galois representations $\{-, -\} : V^c(\hf) \times V(\hf)^* \to \cO(U)$, interpolating the Poincar\'e duality pairings on the classical specialisations.

  This pairing is not perfect, since the induced pairing on the fibre at $\mathfrak{m}$ is the pairing $V^c(f_\beta) \times V(f_\beta)^* \to L$ (which we have seen is non-perfect). However, since the Poincar\'e duality pairings on non-critical-slope, cuspidal specialisations of $\hf$ are perfect, the map $V^c(\hf) \to \Hom_{\cO(U)}(V(\hf)^*, \cO(U))$ induced by the Ohta pairing must be injective, with torsion cokernel. Shrinking $U$ if needed, we may suppose the cokernel is supported at $\mathfrak{m}$.

  \begin{proposition}\label{prop:Ohta}
   The $\cO(U)$-submodule
   \[ \left\{ x \in V^c(\hf)[1/X] : \{x, y\} \in \cO(U)\  \forall y \in V(\hf)^* \right\}\]
   is equal to $V(\hf) \subset \tfrac{1}{X} V^c(\hf)$, and hence the Ohta pairing extends uniquely to a perfect $\cO(U)$-linear pairing $V(\hf) \times V(\hf)^* \to \cO(U)$.
  \end{proposition}

  \begin{proof}
   Let us write $W$ for the $\cO(U)$-lattice $\left\{ x \in V^c(\hf)[1/X] : \{x, y\} \in \cO(U)\  \forall y \in V(\hf)^* \right\}$. Evidently $W \supseteq V^c(\hf)$, and the quotient is $X$-torsion. The image of $V^c(\hf) / X V^c(\hf) \cong V^c(f_\beta)$ in $W / XW \cong \Hom_L(V(f_\beta)^*, L)$ is not zero: it is exactly the 1-dimensional classical quotient of $V^c(f_\beta)$. So the quotient $W / V^c(\hf)$ is isomorphic to $\cO(U) / X^n$, for some $n$.

   However, since both $W$ and $V^c(f_\beta)$ are $G_{\QQ}$-invariant $\cO(U)$-lattices in $V^c(\hf)[1/X]$, this implies that the action of Galois in a suitable $\cO(U)$-basis of $W$ is upper-triangular modulo $X^n$; and if $n \ge 2$, this contradicts the maximality of the reducibility ideal of the pseudocharacter $t_{\hf}$.

   Hence $W$ is a Galois-invariant sublattice of $\tfrac{1}{X} V^c(\hf)$ containing $V^c(\hf)$, with both containments strict. As $V^c(\hf) / X \cong V^c(f_\beta)$ has a unique Galois-invariant line, there is a unique such lattice, namely $V(\hf)$.
  \end{proof}

  \begin{remark}
   In particular, there is a uniquely determined perfect pairing $V(f_\beta) \times V(f_\beta)^* \to L$, refining the result above that these representations are abstractly dual to each other.
  \end{remark}

 \section{Nearly overconvergent cohomology}

  We summarise here some results from \cite[\S 5.2]{loefflerzerbes16} on ``nearly overconvergent \'etale cohomology''. The basic object of study is the cohomology of of the sheaves $\sD'_{U - j} \otimes \sV_j$, for $j \in \ZZ_{\ge 0}$, which we interpret as ``nearly overconvergent cohomology of degree $j$''. Here $U - j$ denotes the image of the disc $U$ under the map $\cW \to \cW, \kappa \mapsto \kappa - j$, so that for every integer $k \in U \cap \ZZ$ with $k \ge j$, the fibre of $\sD'_{U - j} \otimes \sV_j$ at $k$ surjects onto $\sV_{k - j} \otimes \sV_j$.

  \begin{remark}
   These modules are relevant to our present study because the Beilinson--Flach classes associated to Coleman families constructed in \emph{op.cit.} naturally land in these larger sheaves, rather than in the $\sD'_{U}$ sheaves themselves, as we shall recall in more detail below.
  \end{remark}

  As explained \emph{loc.cit.}, there is a natural map of sheaves
  \[ \beta_j^* : \sD'_U \to \sD'_{U - j} \otimes \sV_j, \]
  compatible with the Clebsch--Gordan maps $\sV_k \into \sV_{k - j} \otimes \sV_j$ for integers $k \ge j$; and there is a map the other way,
  \[ \delta_j^* : \sD'_{U - j} \otimes \sV_j \to \sD'_U,\]
  such that $\delta_j^* \circ \beta_j^*$ is multiplication by $\binom{\nabla}{j} \in \cO_U$. Both of these maps are $\cO(U)$-linear, and compatible with Hecke correspondences away from $p$, and with $U'(p)$.

  Since $\cO(U)$ is reduced and $\binom{\nabla}{j} \ne 0$, we may make sense of the map
  \[ \Pr{}^{[j]} \coloneqq \tfrac{1}{\binom{\nabla}{j}} \delta_j^* :\ \ H^1(\overline{Y}, \sD'_{U - j} \otimes \sV_j) \to \tfrac{1}{\binom{\nabla}{j}} H^1(\overline{Y}, \sD'_{U}), \]
  whose specialisation at any $k \ge j$ is a left inverse of $\beta_j^*$. The denominator has simple poles at all locally-algebraic characters of degree $k \in \{0, \dots, (j-1)\}$; but the residues at these poles are valued in the kernel of specialisation on $\sD'_{k}$, since the composite $\sD'_{k - j} \otimes \sV_j \xrightarrow{\delta_j^*} \sD'_k \to \sV_k$ is zero.

  We shall need this map in the case when $\hf$ is a Coleman family with one critical-slope Eisenstein fibre in weight $r$, and $j = r + 1$. Shrinking $U$ if necessary, we can assume that the specialisations at all points of $U$ which are locally-algebraic of degree $0, \dots, r$ are cuspidal and non-critical-slope, except possibly at $r$ itself. Thus $\Pr_{\hf}^{[r+1]}$ on cohomology takes values in $\tfrac{1}{X} V(\hf)^*$; but its residue maps trivially into $V(f_\beta)^*_{\mathrm{quo}}$, so in fact it factors through the slightly smaller module $\tfrac{1}{X} V^c(\hf)^* \supset V(\hf)^*$, where $X$ is a uniformizer at $r \in U$ (cf.~equation \ref{chain}). Thus we obtain a map of Galois representations over $\cO(U)$,
  \[ \operatorname{Pr}^{[r+1]}_{\hf} : H^1(\overline{Y}, \sD'_{U - (r + 1)} \otimes \sV_{r + 1}(1)) \to \tfrac{1}{X}V^c(\hf^*), \]
  whose composite with $\beta_j^*$ coincides with the natural projection map $\operatorname{Pr}_\hf:  H^1(\overline{Y}, \sD'_{U}(1)) \to V(\hf)^* \subset \tfrac{1}{X} V^c(\hf)^*$.

 \section{P-adic Hodge theory}

  We now investigate the restriction of the Galois modules constructed above to the decomposition group at $p$. We fix an embedding $\Qb \into \Qpb$, so we may regard $\Gal(\Qpb/\Qp)$ as a subgroup of $\Gal(\Qb/\QQ)$.
  \subsection{Notations}

   \begin{itemize}
    \item Let $\cR$ be the Robba ring over $\Qp$, and let $\cR_U = \cR \mathop{\hat\otimes} \cO_U$.
    \item Let $t \in \cR$ be the period for the cyclotomic character, so $\varphi(t) = pt$ and $\gamma(t) = \varepsilon(\gamma) t$.
    \item For $\delta \in \cO(U)^\times$, let $\cR_U(\delta)$ be the rank 1 $(\varphi, \Gamma)$-module over $\cR_U$ generated by an element $e$ which is $\Gamma$-invariant and satisfies $\varphi(e) = \delta e$.
    \item For $D$ a $(\varphi, \Gamma)$-module over $\cR$ (or $\cR_U$), write $\Dcris(D) = D[1/t]^{\Gamma}$, with its filtration $\Fil^n \Dcris(D) = (t^n D)^{\Gamma}$; this is compatible with the usual notations for $D = \Dcris(V)$, $V$ crystalline.

    \item Let $D(\hf)^* = \mathbf{D}^\dag_\rig(V(\hf)^*)$, and similarly $D^c(\hf)^*$.
   \end{itemize}

  \subsection{Triangulations for $D(\hf)^*$}

   We know that the space of homomorphisms
   \[ \cR_U(\frac{\beta_\hf}{\psi(p) \tau(p)})(1 + \hk) \to D(\hf)^*\]
   is a finitely-generated $\cO(U)$-module, by the main theorem of \cite{KPX}. Thus it must be free of rank 1, since it is clearly torsion-free, and there is a Zariski-dense set of $x \in \cO(U)$ where the fibre is 1-dimensional. Let $\cF^+ D(\hf)^*$ be the image of a generator of this map, and $\cF^-$ the quotient, so that we have a short exact sequence
   \[ 0 \to \cF^+ D(\hf)^* \to D(\hf)^* \to \cF^- D(\hf)^* \to 0.\]

   Over the punctured disc $U - \{ r \}$, the sub and quotient are both free of rank 1 and so the above sequence defines a triangulation of $D(\hf)^*$.

   \begin{proposition}
    If $f_\beta$ is non-critical, the submodule $\cF^+ D(f_\beta)^*$ is saturated; thus $\cF^-D(\hf)^*$ is free of rank 1 over $\cR_U$, and $\cF^{\pm} D(\hf)^*$ define a triangulation of $D(\hf)^*$ over $U$.
   \end{proposition}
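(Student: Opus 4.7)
The plan is to exploit the non-splitness of $V(f_\beta)^*$, established earlier via the Bella\"iche--Chenevier reducibility-ideal theorem: the extension
\[ 0 \to \Qp(\tau^{-1})(1+r) \to V(f_\beta)^* \to \Qp(\psi^{-1}) \to 0 \]
is non-split, and its two composition-factor characters are \emph{distinct}. Consequently the rank-$2$ $(\varphi,\Gamma)$-module $D(f_\beta)^*$ admits a unique saturated rank-$1$ sub-$(\varphi,\Gamma)$-module, namely the one arising from $\Qp(\tau^{-1})(1+r)$. Thus, to prove that $\cF^+ D(f_\beta)^*$ is saturated, it suffices to identify it with this canonical sub-object.

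First, let $\iota\colon \cR_U(\delta_{\hf})\to D(\hf)^*$ be the map whose image defines $\cF^+$, where $\delta_{\hf} = \beta_{\hf}/(\psi(p)\tau(p))\cdot(1+\hk)$. I would verify that its specialisation $\iota_r\colon \cR(\delta_r)\to D(f_\beta)^*$ at $r$ is non-zero. This follows from the rank-$1$ freeness of $\Hom(\cR_U(\delta_{\hf}), D(\hf)^*)$ over $\cO(U)$: if $\iota$ vanished modulo a uniformiser $X$ of $\mathfrak{m}$, it would be divisible by $X$, contradicting that it is a generator. Hence $\cF^+ D(f_\beta)^*$ is the image of a non-zero map out of $\cR(\delta_r)$, and in particular is a rank-$1$ sub-$(\varphi,\Gamma)$-module of $D(f_\beta)^*$.

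Second, I would identify $\cF^+ D(f_\beta)^*$ with the canonical sub. A direct calculation of Frobenius eigenvalues and Hodge--Tate weights shows that $\delta_r = (p^{r+1}/\psi(p))\cdot(1+r)$ agrees with the $(\varphi,\Gamma)$-character attached to $\Qp(\tau^{-1})(1+r)$, but differs from the character attached to $\Qp(\psi^{-1})$. This distinctness is exactly the statement (noted after the reducibility-ideal theorem) that the reduction of $t_{\hf}$ modulo $\mathfrak{m}$ is a sum of two distinct characters. It follows that the composition $\cR(\delta_r)\to D(f_\beta)^*\twoheadrightarrow \Qp(\psi^{-1})$ is zero, so the image lands in the $\Qp(\tau^{-1})(1+r)$-sub; rank considerations then force equality, which establishes saturation.

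For the second assertion, saturation over the punctured disc $U\setminus\{r\}$ is the standard triangulation theorem for Coleman families (Kedlaya--Pottharst--Xiao, Liu), applied to the classical cuspidal non-critical-slope specialisations of $\hf$, which form a Zariski-dense subset. Combined with saturation at $r$ established above, the cokernel $\cF^- D(\hf)^*$ is $\cR_U$-flat; being finitely generated of generic rank $1$, it is free of rank $1$, giving the triangulation. The main technical obstacle I anticipate is the character-matching step: one must carefully track the normalisations relating the $U(p)$-eigenvalue $\beta_{\hf}$ to the $\varphi$-eigenvalue of the triangulation, and the cyclotomic twist by $1+\hk$ to Hodge--Tate weights, under the author's conventions.
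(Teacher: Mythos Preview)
Your strategy rests on a character-matching claim that is incorrect, and this is precisely the ``technical obstacle'' you flag at the end. You assert that $\delta_r = (p^{r+1}/\psi(p))\cdot(1+r)$ agrees with the $(\varphi,\Gamma)$-character attached to $\Qp(\tau^{-1})(1+r)$; but the latter is $\cR(\tau(p)^{-1})(1+r)$, and $p^{r+1}/\psi(p) \ne \tau(p)^{-1}$ (that equality would force $\alpha = \beta$). Computing $\Dcris$ instead, one finds that $\Dcris(\cF^+ D(f_\beta)^*)$ carries the $\varphi$-eigenvalue $(p^{r+1}/\psi(p))\cdot p^{-(1+r)} = \psi(p)^{-1}$, which is the eigenvalue attached to the \emph{quotient} $\Qp(\psi^{-1})$, not to the subrepresentation. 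So the composite $\cF^+ D(f_\beta)^* \to D(f_\beta)^* \to \mathbf{D}^\dag_{\rig}(\Qp(\psi^{-1}))$ is not zero; it is an injection with cokernel $t^{r+1}$. The triangulation and the global reducibility filtration are \emph{transversal}, not equal --- this is exactly the content of the ``cross'' diagram the paper draws immediately after the proposition.

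There is also a conceptual gap upstream. Global non-splitness of $V(f_\beta)^*$ (which is what the Bella\"iche--Chenevier reducibility-ideal theorem gives) does not imply that $D(f_\beta)^*$ has a unique saturated rank-$1$ submodule: the functor $\mathbf{D}^\dag_{\rig}$ only sees the restriction to $G_{\Qp}$, and even a locally non-split rank-$2$ representation can have several saturated rank-$1$ sub-$(\varphi,\Gamma)$-modules (one \'etale, others not). The hypothesis you must actually use is the non-criticality of $f_\beta$, which by one of the equivalent conditions in \cref{equivalences} is precisely the \emph{local} non-splitness of $V(f_\beta)^*$ at $p$. The paper's argument invokes this directly via a Hodge-filtration comparison: if $\cF^+ D(f_\beta)^*$ were not saturated, the $\varphi = \psi(p)^{-1}$ eigenline in $\Dcris(V(f_\beta)^*)$ would lie in $\Fil^n$ for some $n > -1-r$, hence in $\Fil^0$; this would force the weakly admissible filtered $\varphi$-module --- and therefore the local Galois representation --- to split, contradicting non-criticality.
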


   \begin{proof}
    This follows by a comparison of filtration degrees on $\Dcris$: if the submodule were not saturated, then the $\varphi = \psi(p)^{-1}$-eigenspace in $\Dcris(V(f_\beta)^*)$ would be contained in $\mathrm{Fil}^n$ for some $n > -1-r$, and hence necessarily in $\mathrm{Fil}^0$. This would force $V(f_\beta)^*$ to locally split at $p$ as a direct sum, which according to Theorem \ref{equivalences} contradicts the assumption that $f_\beta$ is non-critical.
   \end{proof}

   We thus have two exact sequences ``cutting across each other'', one arising from the triangulation (the horizontal one in the diagram), and one from the global reducibility of $V(f_\beta)^*$ (the vertical one):
   \[\begin{tikzcd}
    &&0 \dar\\
    &   & \mathbf{D}^{\dag}( V(f_\beta)^*_{\mathrm{sub}})\dar
    \arrow[rd, dashed, bend left, hook, "\text{cokernel $t^{r+1}$}" above right]
    \\
    0 \rar & \cF^+ D(f_\beta)^* \rar
    \arrow[rd, bend right, dashed, hook, "\text{cokernel $t^{r+1}$}" below left]
    & D(f_\beta)^*\dar  \rar & \cF^- D(f_\beta)^* \rar & 0\\
    &                     & \mathbf{D}^{\dag}( V(f_\beta)^*_{\mathrm{quo}})\dar\\
    &&0
   \end{tikzcd}\]
   There is an isomorphism $\mathbf{D}^{\dag}( V(f_\beta)^*_{\mathrm{quo}}) \cong \cR(\psi(p)^{-1})$, and the lower-left dotted arrow must, therefore, identify its source with the $(\varphi, \Gamma)$-stable submodule $t^{r+1} \cR(\psi(p)^{-1})$ of the target; and similarly for the upper right dotted arrow. This corresponds to the fact that the map
   \[ \Dcris\left( \cF^+ D(f_\beta)^* \right) \to \Dcris\left(  V(f_\beta)^*_{\mathrm{quo}} \right)\]
   is an isomorphism on the underlying $\varphi$-modules, but shifts the filtration degree by $r+1$.

  \subsection*{Triangulations for $D^c(\hf)^*$}

   For $D^c(\hf)^*$, the situation is a little different: in this case, the triangulation becomes singular in the fibre at $r$. More precisely, we may choose generators of the free rank 1 $\cO(U)$-modules
   \[
    \Hom_{(\varphi, \Gamma)}\Big( R_U(\beta_{\hf} / \psi\tau(p))(1 + \hk), D^c(\hf)^*\Big)\quad\text{and}\quad
    \Hom_{(\varphi, \Gamma)}\Big( D^c(\hf)^*, R_U(\beta_{\hf}^{-1}), \Big).
   \]
   Then we obtain a submodule $\cF^+ D^c(\hf)^*$ and a quotient $\cF^- D^c(\hf)^*$ which restrict to the triangulation away from weight $r$, and such that the maps
   \[ \cF^+ D^c(f_\beta)^* \to D^c(f_\beta)^* \quad\text{and}\quad D^c(f_\beta)^* \to \cF^- D^c(f_\beta)^*\]
   are nonzero, where $\cF^{\pm} D^c(f_\beta)^*$ are the fibres of $\cF^\pm D^c(\hf)^*$ at $r$. If we identify $V^c(\hf)^*$ with a submodule of $V(\hf)^*$, then we deduce that
   \[ \cF^+ D^c(\hf)^* = X \cdot \cF^+ D(\hf)^*, \qquad  \cF^- D^c(\hf)^* = \cF^- D(\hf)^*,\]
   where $X$ is a uniformizer at $r \in U$.

   However, these two maps do not define a triangulation, because the submodule $\cF^+ D^c(f_\beta)^*$ is not saturated: its image in $D^c(f_\beta)^*$ is exactly $t^{r+1} \cdot \mathbf{D}^{\dag}( V^c(f_\beta)^*_{\mathrm{sub}})$. Similarly (and in fact dually), the quotient map $D^c(f_\beta)^*\to\cF^- D^c(f_\beta)^*$ has image $t^{r+1} \cF^- D^c(f_\beta)^*$, which we can identify with $\mathbf{D}^{\dag}( V^c(f_\beta)^*_{\mathrm{quo}}) \cong \cR(p^{r+1}/\beta)(r+1)$. So we have an analogous ``cross'' as before, but the horizontal row is \emph{not exact}:
   \[
    \begin{tikzcd}
     &&0 \dar\\
     &   & \mathbf{D}^{\dag}( V^c(f_\beta)^*_{\mathrm{sub}})\dar
     \\
     0 \rar & \cF^+ D^c(f_\beta)^* \arrow[r,"!" description] \arrow[ru, dashed, bend left, hook, "\text{cokernel $t^{r+1}$}" above left]
     & D^c(f_\beta)^*\dar  \arrow[r,"!" description] & \cF^- D^c(f_\beta)^* \rar & 0\\
     && \mathbf{D}^{\dag}( V^c(f_\beta)^*_{\mathrm{quo}})\dar\arrow[ur, hook, dashed, bend right, "\text{cokernel $t^{r+1}$}" below right]\\
     &&0
    \end{tikzcd}
   \]
   The lower right arrow induces an isomorphism after inverting $t$, and hence is an isomorphism between $\Dcris$ modules (since $\Dcris(D) = D[1/t]^{\Gamma}$); but since the filtration on $\Dcris$ is given by $(t^n D)^{\Gamma}$, the isomorphism shifts the filtration degrees -- the filtration on $\Dcris(V^c(f_\beta)^*_{\mathrm{quo}})$ is concentrated in degree $-1-r$, but the filtration on $\Dcris\left( \cF^- D^c(f_\beta)^*\right)$ is concentrated in degree 0.

  \subsection{Crystalline periods}\label{subsec:periods}

   Essentially by construction, we may choose (non-canonically) an isomorphism
   \[ b_{\hf}^+: \Dcris\left(\cF^+ D(\hf)^*(-1-\hk) \right) \cong \cO_U. \]

   If $g$ is a non-critical-slope, cuspidal classical specialisation of $\hf$, with $g$ of weight $k+2$ for some $k \ne r$ then we have a \emph{canonical} isomorphism between the fibres of the above modules at $k$, given by the image modulo $\cF^-$ of the class in $\Fil^{k+1} \Dcris\left(V(g) \right)$ of the differential form $\omega_g$ associated to $g$. Here we use the comparison isomorphism between de Rham and \'etale cohomology crucially.

   So, for each such $g$, there must exist a non-zero constant $c_g \in L^\times$ such that $b_{\hf}^+$ specialises to $c_g \omega_g$.
%

   At $X = 0$, we have an isomorphism
   \[ \Dcris(\cF^+ D(f_\beta)^*) \cong \Dcris(V(f_\beta)^*_{\mathrm{quo}}), \]
   and we have a duality between $V(f_\beta)^*_{\mathrm{quo}}$ and $V^c(f_\beta)_{\mathrm{quo}}$; so we should regard $b_{f_\beta}^+$ as a basis of the space
   \[ \Dcris(V^c(f_\beta)_{\mathrm{quo}}) \cong H^1_{\mathrm{dR}, \mathrm{c}}(Y, \mathscr{V}_r)[T = f_\beta] = H^1(X, \omega^{-r}(-\mathrm{cusps}))[T = f_\beta].\]
   So there exists some scalar $c_{f_\beta} \in L^\times$ such that
   \[ b_{f_\beta}^+ = c_{f_\beta} \eta_{f_\beta}, \]
   where $\eta_f$ is as defined in \cref{sect:dRcoh}.

   \begin{remark}
    It is curious to note that the element $b_{\hf}^+$ ``generically'' interpolates the $\Fil^1$ vectors $\omega_g$ for specialisations $g$ of weight $\ne r$, but at the bad weight $k = r$, it interpolates the $\Fil^0$ vector $\eta_{f_\beta}$ instead.
   \end{remark}

\part{Euler systems and $p$-adic $L$-functions: background}
\addtocounter{section}{1}
In the next few sections, we recall the Euler systems and $p$-adic $L$-functions we shall use below. We present no new results here; but we will need to re-formulate some well-known results in minor ways, in order to be able to compare different constructions under our ``Eisenstein degeneration'' formalism in the final sections of this article.

\begin{assumption}\label{ass:classicalColeman}
 Throughout part B of this paper, we shall use $f_\beta$ to denote a classical \textbf{cuspidal} $p$-stabilised newform, of weight $r_1 + 2$ for some integer $r_1 \ge 0$, and \textbf{non-critical slope}; and we shall write $\hf$ for a Coleman family of eigenforms, over some open affinoid $V_1 \ni r$ in weight space, specialising to $f_\beta$ in weight $r_1$. We suppose, for convenience, that $f_\beta$ is a $p$-stabilisation of an eigenform of prime-to-$p$ level whose Hecke polynomial at $p$ has distinct roots (so $f_\beta$ is a ``noble eigenform'' in the sense of \cite{Hansen-Iwasawa}).
\end{assumption}

Similarly, $\hg, \hh$ will denote families over some affinoids $V_2$, $V_3$ passing through some given noble eigenforms $g_\beta$, $h_\beta$ of weights $r_1, r_2$. We shall allow ourselves to shrink the discs $V_i$ if needed, replacing them with arbitrarily small neighbourhoods of the $r_i$; this allows us to assume, for instance, that \emph{all} classical-weight specialisations of our families are noble cuspidal eigenforms.

(Of course, the primary goal of this paper is precisely to study the those families which do \emph{not} satisfy this condition, and this will be the goal of part C; but firstly we shall give a systematic account of the theory in the above setting, before explaining the modifications necessary for the critical-slope Eisenstein cases.)

\subsection{Setup}  We fix an embedding $\overline{\QQ} \into \overline{\QQ}_p$. As in part A, we shall fix a finite extension $L / \Qp$ with integers $\cO$. Let $\cH_\Gamma$ be the distribution algebra of $\Gamma \cong \Zp^\times$ (with $L$-coefficients), and $\hj: \Gamma \into \cH_\Gamma^\times$ the universal character, which we regard as a character of the Galois group by composition with the cyclotomic character.

\begin{notation}
 Abusively we shall write $H^1(\QQ, -)$ for $H^1(G_{\QQ, S}, -)$ where $S$ is a sufficiently large finite set of primes (i.e.~containing $p$ and all primes at which the relevant representation is ramified).
\end{notation}

For any $L$-linear $G_{\QQ}$-representation $V$, we write $V(-\hj)$ as a shorthand for $V \otimes_L \cH_{\Gamma}(-\hj)$. Thus, for any $\cO$-lattice $T \subset V$, we have a canonical isomorphism of $\cH_\Gamma$-modules
\[ H^1(\QQ, V(-\hj)) \cong \cH_{\Gamma} \otimes_{\Lambda_\Gamma} \varprojlim_n H^1(\QQ(\mu_{p^\infty}), T), \]
and similarly for $G_{\Qp}$-representations.
%

\subsection{Local machinery: Coleman--Perrin-Riou maps}

 Let $V$ be a crystalline $L$-linear representation of $G_{\Qp}$. Then there is a homomorphism of $\cH_\Gamma$-modules, the \emph{Perrin-Riou regulator},
 \[ \mathcal{L}^{\Gamma}_V: H^1(\Qp, V(-\hj)) \longrightarrow I^{-1} \cH_\Gamma \otimes_L \Dcris(\Qp, V) \]
 (which depends on a choice of $p$-power roots of unity $(\zeta_{p^n})_{n \ge 1}$ in $\overline{L}$, although we suppress this from the notation); here $I$ is a certain fractional ideal depending on the Hodge--Tate weights of $V$. The map $\mathcal{L}^{\Gamma}_V$ is characterised by a compatibility with the Bloch--Kato logarithm and dual-exponential maps for twists of $V$. See \cite{LVZ} for explicit formulae.

 If we choose a vector $\eta \in \Dcris(V)$, and apply the above constructions to $V^*$, then we obtain a \emph{Coleman map}
 \[ \operatorname{Col}_\eta: H^1(\Qp, V^*(-\hj)) \longrightarrow I^{-1} \cH(\Gamma), \qquad x \mapsto \langle \mathcal{L}^\Gamma_{V^*}(x), \eta \rangle. \]
 We shall most often use this when $V$ is unramified and non-trivial, in which case $I$ is the unit ideal.

 All three objects above -- the cohomology of $V(-\hj)$, the $\Dcris$ module, and the regulator map connecting them -- can also be defined more generally for crystalline $(\varphi, \Gamma)$-modules over the Robba ring, whether or not they are \'etale.

\section{The $\GL_2 / \QQ$ setting}
\label{es4}

 The Kato Euler system of \cite{kato04} can be attached to a Coleman family $\hf$ satisfying \cref{ass:classicalColeman}, as discussed e.g.~in \cite{Hansen-Iwasawa}. However, for our purposes it suffices to restrict to the case of Hida families, following the developments of \cite{ochiai03}. We suppose our family $\hf$ is new of some level $N$, and, as in \emph{op.cit.}, we assume that the Galois representation attached to $\hf$ is residually irreducible.

 \subsection{Periods}

  Let $f_k$ be (the newform associated to) the weight $k+2$ specialisation of $\hf$, for some $k \ge 0$. For each sign $\pm$, the eigenspace in Betti cohomology of $Y_1(N)$ (with $\QQ(f)$-coefficients) on which the Hecke operators act by the eigensystem of $f_k$ is one-dimensional, and we choose bases $\gamma_f^{\pm}$ of these spaces. These determine complex periods $\Omega^{\pm}_{f_k} \in \CC^\times$.

  Let $V_1$ be the open of the weight space over which the family is defined. Then we have an $\cO(V_1)$-module $V(\hf)^*$ interpolating the Betti (or \'etale) cohomology eigenspaces of all specialisations of $\hf$. Up to possibly shrinking $V_1$, we may assume that the eigenspaces $V(\hf)^{(c = \pm 1)}$ are free of rank 1 over $\cO(V_1)$, and choose bases $\gamma_{\hf}^{\pm}$. In general we cannot arrange that the weight $k$ specialisation of $\gamma_{\hf}^{\pm}$ is defined over $\QQ(f_k)$ for all $k$; so it is convenient to extend the definition of $\Omega^{\pm}_{f_k}$ accordingly, so these periods now lie in the space $(L \otimes_{\QQ(f_k)} \CC)^\times$.

  \begin{remark}
   For avoidance of confusion, we point out that the period we are calling $\Omega^{\pm}_{f_k}$ corresponds to $\tfrac{1}{\lambda^{\pm}(k)} \Omega^{\pm}_{f_k}$ in the notation of \cite[\S 3.2]{bertolinidarmon14}. Hence, although the periods we have considered are elements in $(L \otimes_{\QQ(f_k)} \CC)^\times$ (which will ease our subsequent discussions), it must be clear that one can easily recover more familiar objects from them.
  \end{remark}

 \subsection{The Kitagawa--Mazur $p$-adic $L$-function}

 Having chosen $\gamma_{\hf}^{\pm}$, we can define Kitagawa--Mazur-type $p$-adic $L$-functions \cite{kitagawa}
 \[ L_p(\hf) \in \cO(V_1 \times \cW), \]
 which interpolate the critical $L$-values of all classical, weight $\ge 2$ specialisations of $\hf$, with the periods determined by $\gamma^{\pm}_{\hf}$. More precisely, the value at $(k, j)$, with $0 \le j \le k$, is given by
 \[ L_p(\hf)(k, j) =  \frac{j!(1 - \tfrac{\beta_k}{p^{1+j}})(1 - \tfrac{p^j}{\alpha_k})}{(-2\pi i)^j \Omega_{f_k}^\pm} L(f_k, 1+j), \]
 where $f_k$ is the weight $k+2$ specialisation of $\hf$ as above, $(\alpha_k, \beta_k)$ are the roots of its Hecke polynomial (with $\alpha_k$ corresponding to the family $\hf$), and $\pm = (-1)^j$. (Here we assume $f_k$ is new of level $N$, which is automatic for $k > 0$; a slightly modified formula applies if $k = 0$ and $f_k$ is a newform of level $pN$ and Steinberg type at $p$.)

 \subsection{The adjoint $p$-adic $L$-function} We shall also need the following construction. For any classical specialisation $f$ of $\hf$, if we define the plus and minus periods $\Omega^{\pm}_{f}$ using $\QQ(f)$-rational basis vectors $\gamma_f^\pm$, then the ratio
 \[ L^{\mathrm{alg}}(\operatorname{Ad} f, 1) \coloneqq \frac{-2^{k-1} i \pi^2 \langle f, f \rangle}{\Omega^+_f \Omega^-_f} \]
 is in $\QQ(f)^\times$. If we choose bases $\gamma_{\hf}^{\pm}$ over the family as above, and use the periods for each $f_k$ determined by these, then a construction due to Hida \cite{hida-adjoint} gives a $p$-adic adjoint $L$-function $L_p(\operatorname{Ad} \hf) \in \cO(V_1)$ interpolating these ratios:
 \[ L_p(\operatorname{Ad} \hf)(k) = \left(1 - \tfrac{\beta_{k}}{\alpha_{k}}\right)\left(1 - \tfrac{\beta_{k}}{p\alpha_{k}}\right) L^{\mathrm{alg}}(\operatorname{Ad} f_k, 1)\]
 for all $k \in V_1 \cap \ZZ_{\ge 0}$ such that $f_k$ is a level $N$ newform. In particular, recall that if $\hf$ is $p$-distinguished, then the congruence ideal of $\hf$ is principal, and this ideal is generated by $L_p(\operatorname{Ad} \hf)$.

 \subsection{Kato's Euler system} Having chosen $\gamma_\hf^{\pm}$, we obtain a canonical \emph{Kato class}
 \[ \kappa(\hf) \in H^1(\QQ, V(\hf)^*(-\hj)), \]
 which is the ``$p$-direction'' of an Euler system. Kato's explicit reciprocity law \cite{kato04} establishes that the image of that class under the Perrin-Riou map recovers the $p$-adic $L$-function.

 More precisely, let $\cF^+ V(\hf)$ denote the rank 1 unramified subrepresentation of $V(\hf)$ over $\cO(V_1)$ (which exists since $\hf$ is ordinary); and let $\eta_{\hf} \in \Dcris(\cF^+ V(\hf))$ be the canonical vector constructed in \cite{KLZ17}, which is characterised by interpolating the classes $\eta_f$ of \cref{sect:dRcoh} for each classical specialisation $f$ of $\hf$. Then we have
 \[ \left\langle \mathcal{L}_{\cF^- V(\hf)^*}^\Gamma( \loc_p \kappa(\hf)), \eta_{\hf} \right\rangle = L_p(\hf). \]

 \begin{remark}
  This is a slight strengthening of a result of Ochiai; see \cite[Theorem 3.17]{ochiai03} for the original formulation. Ochiai's result is a little less precise, since he chooses an arbitrary basis $d$ of the module $\Dcris(\cF^+ V(\hf))$ (which is denoted $\mathcal{D}$ in \emph{op.cit.}); we have used the results on Eichler--Shimura in families proved in \cite{KLZ17} to choose a \emph{canonical} basis $\eta_{\hf}$, for which the correction terms $C_{p, \fp, d}$ in Ochiai's formulae are all 1.
 \end{remark}

\section{Double and triple products}
\subsection{The Rankin--Selberg setting}\label{es5}

Now let $\hf$ and $\hg$ be two Coleman families satisfying \cref{ass:classicalColeman}, living over discs $V_1$, $V_2$ in weight space.

\subsubsection{P-adic $L$-functions}

 There is an ``$\hf$-dominant'' $p$-adic $L$-function $L_p^{\hf}(\hf, \hg)$ over $V_1 \times V_2 \times \cW$, whose value at $(k, \ell, j)$ with $\ell+1 \le j \le k$ is $(\star) \cdot \frac{L(f_k, g_\ell, 1+j)}{\langle f_k, f_k \rangle}$, where $(\star)$ is the usual m\'elange of Euler factors, powers of $i$ and $\pi$ etc. Similarly, there is a ``$\hg$-dominant'' $p$-adic $L$-function $L_p^{\hg}(\hg, \hf)$, with an interpolating range at points with $k+1 \le j \le \ell$. If $\hf$ is ordinary (i.e.~a Hida family) then $L_p^{\hf}(\hf, \hg)$ is bounded.

\begin{remark}
 Note that our $p$-adic $L$-functions $L(f, g, s)$ here are \emph{imprimitive}, i.e.~their Dirichlet coefficients are given by the usual straightforward formula in terms of $q$-expansion coefficients, cf.~\cite[\S 2.7]{KLZ17}. This means they may differ by finitely many Euler factors from the \emph{primitive} $p$-adic $L$-function associated to the Galois representation $V_p(f) \otimes V_p(g)$ (although this issue only arises if the levels of $f$ and $g$ are not coprime).
\end{remark}

\subsubsection{Euler systems} The Beilinson--Flach Euler system of \cite{loefflerzerbes16} is attached to two modular forms, or more generally to two Coleman families $\hf$ and $\hg$. This generalizes the earlier construction of \cite{KLZ17}, where the variation was restricted to the case of ordinary families. Consider the rank 4 module
\begin{equation}
V(\hf, \hg)^*(-\hj) \quad\text{over}\quad \cO(V_1 \times V_2 \times \cW),
\end{equation}
characterized by the property that on specialising at any integers $(k,\ell,j)$, with $k \in V_1$ and $\ell \in V_2$, we recover $V(f_k)^* \otimes V(g_\ell)^*(-j)$.

Fix $d \in \ZZ_{>1}$ such that $(d,6S)=1$.  There exists a cohomology class of Beilinson--Flach elements \[ {}_d \kappa(\hf,\hg) \in H^1(\QQ, V(\hf, \hg)^*(-\hj)), \] which is the one in \cite[Theorem 5.4.2]{loefflerzerbes16}, where it would be denoted by ${}_d \mathcal{BF}_{1,1}^{[\hf, \hg]}$. (Note that we have slightly modified the notations just for being consistent with the other Euler systems, and we have written $d$ and not $c$ for the auxiliary parameter to avoid any misunderstanding with the index $c$ used to denote compactly-support cohomology.)

The dependence on $d$ is as follows: after tensoring with $\operatorname{Frac} \cO(V_1 \times V_2 \times \cW)$, the class
\[ \kappa(\hf, \hg) \coloneqq C_d^{-1} \otimes {}_d \kappa(\hf, \hg) \]
is independent of $d$, where
\begin{equation}
 \label{eq:defCd}
  C_d(\hf,\hg,\hj) \coloneqq d^2 - d^{(\hj-\hk_1-\hk_2)}\varepsilon_\hf^{-1}(d) \varepsilon_\hg^{-1}(d).
\end{equation}

\subsubsection{Reciprocity laws}\label{reclaw-bf}

With the notations used in \cite[\S 6]{loefflerzerbes16}, let $D(\hf)^*$ be the $(\varphi,\Gamma)$-module of $V(\hf)^*$, and consider the rank 1 submodule $\cF^+ D(\hf)^* \subset D(\hf)^*$ together with the corresponding quotient $\cF^- D(\hf)^*$. We consider the same filtration for $D(\hg)^*$. We write \[ \cF^{--} D(\hf,\hg)^* = \cF^- D(\hf)^* \hat \otimes \cF^- D(\hg)^*, \] and similarly for $\cF^{-+}$, $\cF^{+-}$, $\cF^{++}$. We also define $\cF^{- \circ} D(\hf,\hg)^* = \cF^- D(\hf)^* \hat \otimes D(\hg)^*$. Proceeding as in \cite[Theorem 7.1.2]{loefflerzerbes16}, the projection of ${}_d \kappa(\hf, \hg)$ to $\cF^{--}D(\hf,\hg)^*$ vanishes. Hence, the projection to $\cF^{-\circ}$ is in the image of the injection \[ H^1(\QQ, \cF^{-+} D(\hf, \hg)^*(-\hj)) \longrightarrow H^1(\QQ, \cF^{- \circ} D(\hf, \hg)^*(-\hj)). \]

Then the reciprocity law of Loeffler--Zerbes \cite[Theorem 7.1.5]{loefflerzerbes16} establishes that the image of that class under the Perrin-Riou map recovers the $p$-adic $L$-function.
\begin{equation}
 \label{eq:BFrecip}
 \operatorname{Col}_{\eta_{\hf} \otimes \omega_{\hg}}(\loc_p({}_d \kappa(\hf,\hg))) = C_d(\hf, \hg,\hj) \cdot L_p^{\hf}(\hf,\hg).
\end{equation}
where the Coleman map is the composition of the Perrin-Riou regulator followed by the pairing with the appropriate differentials.

\subsection{Diagonal cycles}\label{es6}

Let $(\hf,\hg,\hh)$ be a triple of Coleman families, living over discs $V_1$, $V_2$, $V_3$ in weight space, with all classical specialisations non-critical-slope cusp forms, as in the previous section.

We suppose the tame nebentype characters satisfy $\varepsilon_{\hf} \varepsilon_{\hg} \varepsilon_{\hh} = 1$. It follows that we may choose (non-uniquely) a character $\mathbf{t}: \Zp^\times \to \cO(V_1 \times V_2 \times V_3)$ satisfying $2\mathbf{t} = \hk_1 + \hk_2 + \hk_3$, where $\hk_i$ are the universal characters into $\cO(V_i)$. This imposes an additional condition on our specialisations: we shall say a point $P$ of $V_1 \times V_2 \times V_3$ is an ``integer point'' if $(\hk_1, \hk_2, \hk_3)$ specialise at $P$ to integers $(k, \ell, m) \ge -1$, and, in addition, $\mathbf{t}$ specialises to $\tfrac{k + \ell + m}{2}$ (rather than its product with the quadratic character mod $p$), which amounts to requiring that $k + \ell + m$ lie in a particular congruence class modulo $2(p-1)$.

\subsubsection{P-adic $L$-functions}

Following a construction of Andreatta and Iovita \cite{andreattaiovita21}, there is a $\hf$-dominant square root $p$-adic $L$-function $\Lp^{\hf}(\hf,\hg,\hh)$ over $V_1 \times V_2 \times V_3$.  The square of its value at an integer point $(k, \ell, m)$ with $k > \ell+m$ is $(\star) \cdot L(f_k, g_\ell, h_m, \tfrac{k+\ell+m}{2}+2)$, where $(\star)$ is the usual m\'elange of Euler factors, complex periods etc. (Note that with our conventions $f_k$ has weight $k+2$, etc, so $\tfrac{k+\ell+m}{2}+2$ is the centre of the functional equation.)

\begin{remark}
In fact we shall only need this construction when the ``dominant'' family is an ordinary family. In this case, the construction is actually considerably simpler, and can be carried out via the same techniques as in the ordinary case, without need to resort to the developments of Andreatta--Iovita.
\end{remark}

\subsubsection{Euler systems}

Consider the rank 8 module
\begin{equation}
V(\hf, \hg, \hh)^* \coloneqq V(\hf)^* \otimes V(\hg)^* \otimes V(\hh)^*(-1-\tfrac{\hk_1 + \hk_2 + \hk_3}{2}) \quad\text{over}\quad \cO(V_1 \times V_2 \times V_3),
\end{equation}
which is Tate self-dual. By the works of Darmon--Rotger \cite{DR3} and Bertolini--Seveso--Venerucci \cite{BSV1}, there is a \textbf{diagonal-cycle class} $\kappa(\hf,\hg,\hh)$ attached to the triple $(\hf,\hg,\hh)$ (and the choice of square-root character $\mathbf{t}$, which we suppress); this is a class
\[ \kappa(\hf,\hg,\hh) \in H^1(\QQ, V(\hf,\hg,\hh)^*), \]
introduced for instance in \cite[\S 8.1]{BSV1}. It is characterized by the property that on specialising at any integer point $(k,\ell,m)$, with $k \in V_1$, $\ell \in V_2$ and $m \in V_3$ satisfying the ``balanced'' conditions $\{ k \le m + \ell, \ell \le m + k, m \le k + \ell\}$, we recover the Abel--Jacobi image of the diagonal cycle for $(f_k, g_\ell, h_m)$ defined in \cite{darmonrotger14}.

\begin{remark}\label{rmk:notperfect}
 Note that there is an omission in section 4.2 of \cite{BSV1}, where the machinery for interpolating diagonal-cycle classes is developed: it is implicitly assumed that the Ohta pairing $V(\hf) \times V^c(\hf)^* \to \cO(V_1)$ (formula (82) in \emph{op.cit.}) is is perfect. This cannot be be true in general, since at a critical-slope Eisenstein point it specialises to the second of the two pairings in \eqref{eq:notperfect} above, and we have seen that this has 1-dimensional left and right kernels. However, the pairing does become perfect after specialising to a neighbourhood of a noble eigenform, and hence the construction is valid when $\hf, \hg, \hh$ satisfy the conditions of \cref{ass:classicalColeman}; see the erratum \cite{BSV1-erratum} for the necessary modifications.

 We shall see in Part C of this paper that the construction gives something slightly weaker, i.e. a cohomology class taking values in $\tfrac{1}{X} V^c(\hf, \hg, \hh)^* \supset V(\hf, \hg, \hh)^*$, when one of the families is critical-slope.
\end{remark}

\subsubsection{Reciprocity laws} With the notations of \cref{reclaw-bf}, we may consider the $(\varphi,\Gamma)$-module $D(\hf,\hg,\hh)^*$, as well as its different filtrations. In particular, adapting \cite[Corollary 8.2]{BSV1} to our setting yields that the diagonal cycle class lies in the rank four submodule $(\cF^{++\circ} + \cF^{+\circ+} + \cF^{\circ ++})D(\hf,\hg,\hh)^*$. In particular, the projection to $\mathcal F^{- \circ \circ}$ is in the image of the injection \[ H^1(\QQ, \cF^{-++} D(\hf, \hg, \hh)^*) \longrightarrow H^1(\QQ, \cF^{- \circ \circ} D(\hf, \hg,\hh)^*). \]

Then the reciprocity law of Bertolini--Seveso--Venerucci establishes a connection between the diagonal-cycle class and the $p$-adic $L$-function.

\begin{theorem}\label{thm:rec-law-cycles}
The image of that class under the corresponding Coleman map recovers the $p$-adic $L$-function.\[ \operatorname{Col}_{\eta_{\hf} \otimes \omega_{\hg} \otimes \omega_{\hh}}(\loc_p(\kappa(\hf,\hg,\hh))) = \Lp^{\hf}(\hf,\hg,\hh). \]
\end{theorem}

\begin{proof}
This follows from \cite[Theorem A]{BSV1}. Although the result in \emph{loc.\,cit.} is stated just for Hida families, the argument remains valid in the present generality, using the $p$-adic $L$-function of Andreatta--Iovita \cite{andreattaiovita21} and the general theory of $(\varphi,\Gamma)$-modules.
\end{proof}

\section{$\GL_1$ over an imaginary quadratic field}
\label{es2}

\subsection{Setting} We fix an imaginary quadratic field $K$ where the prime $p$ splits. For simplicity, we shall always take $K$ to have class number one. We also fix embeddings $K \subset \overline{\QQ} \hookrightarrow \Qpb$ (determining a prime $\fp$ of $K$ above $p$).

We write $\sigma$ and $\bar\sigma$ for the chosen embedding $K \into \QQ_p$ and its complex conjugate. If $a - b = 0 \bmod w_K \coloneqq \#\cO_K^\times$, then the character $\sigma^a \bar{\sigma}^b$ of $K^\times$ is trivial on $\cO_K^\times$, and thus determines an algebraic Gr\"ossencharacter of $K$ of conductor 1 (mapping a prime $\mathfrak{q}$ to $\lambda^a \bar{\lambda}^b$ for $\lambda$ any generator of $\mathfrak{q}$). We abuse notation slightly by writing $\sigma^a \bar{\sigma}^b$ for this Gr\"ossencharacter. Its infinity-type (with the conventions of \cite{BDP13} and \cite{JLZ} \S 2.1) is $(a, b)$.

We let $\Sigma_K$ be the set of algebraic Gr\"ossencharacters of $K$. Define $\Sigma_K^{\crit} = \Sigma_K^{(1)} \cup \Sigma_K^{(2)} \subset \Sigma_K$ to be the disjoint union of the sets
\begin{align*}
 \Sigma_K^{(1)} &= \{ \xi \in \Sigma_K \text{ of infinity type } (a,b), \text{ with } a \leq 0, b \geq 1 \}, \\
 \Sigma_K^{(2)} &= \{ \xi \in \Sigma_K \text{ of infinity type } (a,b), \text{ with } a \geq 1, b \leq 0 \}.
\end{align*}
Thus $\xi \in \Sigma_K^{\crit}$ if and only if $s=0$ is a critical point for $L(\xi^{-1},s)$.

For $\Psi$ any algebraic Gr\"ossencharacter of $K$, we may define a 1-dimensional Galois representation $V_p(\Psi)$, on which $G_K$ acts via the composite of $\Psi$ with the Artin map (normalised to send geometric Frobenii to uniformizers, as in \cite[\S 2.3.2]{JLZ}). In particular $V_p(\sigma \bar\sigma) \cong \Qp(-1)$, and $L(V_p(\Psi), s) = L(\Psi, s)$.

\subsection{Character spaces}
Let $\Gamma_K$ denote the group $\Gal(K[p^\infty] / K)$, where $K[p^\infty]$ is the ray class field mod $p^\infty$; and let $\mathcal W_K$ be the corresponding character space, so $\cO(\cW_K) = \cH(\Gamma_K)$. We let $\hj_K$ be the universal character $\Gal(K^{\mathrm{ab}} / K) \twoheadrightarrow \Gamma_K \into \cH(\Gamma_K)^\times$.

We identify an algebraic Gr\"ossencharacter $\xi$ of $K$ unramified outside $p$ with the unique point of $\cW_K$ at which $\hj_K$ specialises to $V_p(\xi^{-1})$ (note signs).

\begin{remark}
 This inverse ensures that, if we identify $\Gamma_K$ with $\cO_{K,p}^\times / \cO_K^\times$ via the restriction of the Artin map to $\cO_{K, p}^\times \subset \mathbf{A}_{K, \mathrm{f}}^\times$, the character $x \mapsto \operatorname{Nm}_{K/\QQ}(x)$ corresponds to the cyclotomic character.
\end{remark}

\subsection{Katz's $p$-adic $L$-function}

Let $\Psi$ be a Gr\"ossencharacter of finite order and conductor coprime to $p$, with values in $L$. By the work of Katz \cite{katz76} (see also \cite[Theorem 3.1]{BDP12}), there exists a $p$-adic analytic function
\[ L_{\fp}^{\mathrm{Katz}}(\Psi): \cW_K \longrightarrow L \otimes_{\Qp} \widehat{\QQ}_p^{\mathrm{nr}}, \]
uniquely determined by the interpolation property that if $\xi \in \Sigma^{(2)}_K$ is a character of conductor 1, hence necessarily of the form $\sigma^a \bar{\sigma}^b$ for some $a \ge 1, b \le 0$, then we have
\[ \frac{L_{\fp}^{\mathrm{Katz}}(\Psi)(\xi)}{\Omega_p^{a-b}} = \mathfrak a(\xi) \times \mathfrak e(\xi) \times \mathfrak f(\xi) \times \frac{L(\Psi\xi^{-1},0)}{\Omega^{a-b}}, \]
with both sides lying in $\overline{\QQ}$, where
\begin{enumerate}
\item $\mathfrak a(\xi) = (a-1)! \pi^{-b}$,
\item $\mathfrak e(\xi) = (1- p^{-1} \Psi^{-1}\xi(\fp))(1-\xi^{-1}\Psi(\bar{\fp}))$,
\item $\mathfrak f(\xi) = D_K^{b/2} 2^{-b}$,
\item $\Omega_p \in  (\widehat{\QQ}_p^{\mathrm{nr}})^\times$ is a $p$-adic period attached to $K$,
\item $\Omega \in \CC^{\times}$ is the complex period associated with $K$,
\item $L(\Psi \xi^{-1},s)$ is Hecke's $L$-function associated with $\Psi \xi^{-1}$.
\end{enumerate}

(More generally, one can state an interpolation property at all algebraic Gr\"ossencharacters $\xi \in \Sigma^{(2)}_K$ of $p$-power conductor -- not necessary of conductor 1 -- but we shall not need this more general formula here.)

\subsection{Elliptic units}

 Again, let $\Psi$ be a Gr\"ossencharacter of finite order and conductor coprime to $p$. Let $(-)^\sim$ denote the reflexive hull of a $\cH_\Gamma$-module. The Euler system of elliptic units can be thought as an element
 \[ \kappa(\Psi,K) \in H^1(K, V_p(\Psi)^*(1-\hj_K))^{\sim}, \]
 constructed by Coates and Wiles in their seminal paper \cite{coateswiles77}; see \cite[\S 15]{kato04} or \cite[\S 1.2]{BCDDPR} for more recent accounts\footnote{Note that the reflexive closure seems to have been overlooked in the latter reference; it is not needed if $\Psi$ is ramified at some prime away from $p$, but cannot be got rid of when $\Psi$ has conductor 1. In Kato's account this corresponds to passing from the ``smoothed'' class ${}_\mathfrak{a} z_{p^\infty \mathfrak{f}}$ to its analogue without the modification ${}_\mathfrak{a}()$.} By construction, the specialisation of $\kappa(\Psi,K)$ at a finite-order character $\xi$ of $\Gamma_K$ is the image under the Kummer map of a linear combination of global units in an abelian extension of $K$.

 Localising at $\fp$, and using the two-variable version of Perrin-Riou's regulator defined in \cite{loefflerzerbes14}, we have a Coleman map
 \[ \operatorname{Col}_{\fp, \Psi}: H^1(K_{\fp}, V_p(\Psi)^*(1-\hj_K)) \to \cH(\Gamma_K),\]
 which extends automatically to the reflexive hull.  
 The explicit reciprocity law of Coates--Wiles links the system of elliptic units with Katz's two variable $p$-adic $L$-function:
 \[ \operatorname{Col}_{\fp, \Psi}\Big(\loc_{\fp} \kappa(\Psi,K)\Big) = L_{\fp}^\mathrm{Katz}(\Psi). \]

\section{Heegner classes}\label{es3}

We follow for this section the exposition of \cite{JLZ}, which generalizes Castella's earlier results \cite{castella-variation}, and keep the notations of the previous sections. We suppose all primes dividing $N$ are split in $K$, and choose an ideal $\mathfrak{N}$ with $\cO_K / \mathfrak{N} \cong \ZZ / N$. Let $\hf$ be a Coleman family of tame level $N$ defined over an affinoid disc $V_1$.

In \cite{JLZ}, we worked over an auxiliary rigid space $\widetilde{V}_1$ (essentially a piece of the eigenvariety for $\operatorname{GU}(1, 1)$) parametrising conjugate-self-dual twists of the base-change of $\hf$ to $K$. To simplify the exposition, and since this is harmless towards our objectives of explaining the degeneration phenomena going on, we shall avoid appealing to this construction by make the following two simplifying assumptions:
\begin{enumerate}[(a)]
 \item $K$ has class number one, as above, and moreover $K \ne \QQ(\sqrt{-1}), \QQ(\sqrt{-3})$;
 \item $\hf$ has trivial nebentype.
\end{enumerate}
These simplifying hypotheses will allow us to split the parameter space up into two copies of $\cW$, one for the ``weight'' variable and one for the ``anticyclotomic'' one.

More precisely, assumption (b) allows us to choose a character $\mathbf{m}: \Zp^\times \to \cO(V_1)^\times$ which is a square root of the canonical character $\hk$, so $\hf$ has weight-character $\hk + 2 = 2\mathbf{m} + 2$. Slightly abusively, we write ``$\tfrac{\hk}{2}$'' for this character. Hence the representation $W = V(\hf)^*(-\tfrac{\hk}{2})$ of $G_{\QQ}$ satisfies $W \cong W^*(1)$.

\begin{remark}
 Note that if $k$ is an integer in $V_1$, then $k$ is necessarily even. However, the specialisation of ``$\tfrac{\hk}{2}$'' at $k$ might not be $x \mapsto x^{k/2}$; in fact this holds only if $k$ lies in a certain congruence class mod $2(p-1)$, lifting the congruence class mod $(p-1)$ determined by the component of $\cW$ containing $V_1$.
\end{remark}

Meanwhile, assumption (a) implies that the Gr\"ossencharacter $\chi_{\ac} = \sigma/\bar{\sigma}$ gives an isomorphism $\Gamma^{\ac} \cong \Zp^\times$, where $\Gamma^{\mathrm{ac}} = \cO_{K, p}^\times / \Zp^\times$ is the Galois group of the maximal anticyclotomic extension unramified outside $p$. Composing this with the universal character $\hj: \Zp^\times \to \cO(\cW)^\times$, we obtain a universal anti-cyclotomic character $\hj^{\ac}: G_K \to \cO(\cW)^\times$, whose specialisation at $j \in \cW$ is the character mapping arithmetic Frobenius at a prime $\lambda \nmid p$ of $K$ to $\left(\sigma(\lambda) / \bar{\sigma}(\lambda)\right)^j$.

Consider the module over $\cO(V_1 \times \cW)$ defined by
\[ V^{\ac}(\hf)^* := V(\hf)^*(-\tfrac{\hk}{2}) \mathop{\hat\otimes}_L \cO(\cW)(-\hj^{\ac}).\]
The Galois representation $V^{\ac}(\hf)^*$ is characterized by the property that for any integers $(k,j)$, with $k \in V_1$ (and $k$ in the appropriate congruence class modulo $2(p-1)$), we recover the Galois representation
\[  V^{\ac}(\hf)^*(k,j) = V_p(f_k \times \chi^{-1})^*,\qquad \chi = \sigma^{(k/2 + j)} \bar{\sigma}^{(k/2-j)}, \]
where $f_k$ is the weight $k+2$ specialisation of $\hf$.

\begin{remark}
 Note that $(f_k, \chi)$ is a \emph{Heegner pair} in the sense of \cite{JLZ}. This corresponds to the fact that the character $\chi' = \chi \cdot \operatorname{Nm}$ is \emph{central critical} for $f_k$, in the sense of \cite{BDP13}, so that $L(f, \chi^{-1}, 1) = L(f, (\chi')^{-1}, 0)$ is a central $L$-value; cf.~Remark 2.2.2 of \cite{JLZ} for the shift by 1. If $-\tfrac{k}{2} \le j \le \tfrac{k}{2}$, then $\chi'$ is a point of the region $\Sigma^{(1)}_{\mathrm{cc}}(f_k)$ in Figure 1 of \cite{BDP13}. If $j \ge \tfrac{k}{2}+1$, it is in $\Sigma^{(2)}_{\mathrm{cc}}(f_k)$. (Similarly, it is in $\Sigma^{(2')}_{\mathrm{cc}}(f)$ if $j \le -1-\tfrac{k}{2}$, but we shall not consider this region here.)
\end{remark}

In this scenario, we have:
\begin{itemize}
 \item a \textbf{Heegner class}
 \[ \kappa(\hf,K) \in H^1(K, V^{\ac}(\hf)^*)\]
 constructed in \cite{JLZ} (see Theorem A), whose specialisations at $(k, j)$ with $-\tfrac{k}{2} \le j \le \tfrac{k}{2}$ are the Abel--Jacobi images of Heegner cycles.
 \item an \textbf{anticyclotomic $p$-adic $L$-function}
 \[ L_{\fp}^\mathrm{BDP}(\hf)\in \cO(V_1 \times \cW), \]
 such that the square of its value at a point $(k,j)$ with $j \geq \tfrac{k}{2} + 1$ agrees with $(*) \cdot L(f_k/K \times \chi^{-1}, 1)$, where $(*)$ the usual combination of Euler factors.
 \item an \textbf{explicit reciprocity law}, \cite[Theorem B]{JLZ}. To state this, we note that $\loc_{\fp} \kappa(\hf, K)$ factors through the anticyclotomic Iwasawa cohomology of a rank 1 submodule $\cF^+_p D(\hf)^* \subset D(\hf)^*$, where $D(\hf)^*$ is the $(\varphi, \Gamma)$-module of $V(\hf)^*$ at $p$ (cf.~Theorem 6.3.4 of \emph{op.cit.}). Letting $\cF^-_p D(\hf)$ denote the quotient of $D(\hf)$ dual to this, there is a canonical basis vector $\omega_{\hf}$ of $\Dcris(\cF^-_p D(\hf))$, interpolating the classes $\omega_f$ for classical specialisations $f$ of $\hf$ (cf.~\cref{sect:dRcoh}), which we may use this to define a Coleman map $\operatorname{Col}_{\fp, \omega_{\hf}}$. The explicit reciprocity law then states that
 \[ \operatorname{Col}_{\fp, \omega_{\hf}}(\loc_{\fp}(\kappa(\hf,K))) = (-1)^{(\hk/2+\hj)} L_p^\mathrm{BDP}(\hf). \]
\end{itemize}

\part{Critical-slope Eisenstein specialisations}

 \section{Deformation of Beilinson--Flach elements}
\label{sect:BFdef}

  \subsection{Setup} In this section we consider the Beilinson--Flach Euler system of \cite{loefflerzerbes16} attached to two modular forms, or more generally to two Coleman families. This generalizes the earlier construction of \cite{KLZ17}, where the variation was restricted to the case of ordinary families.

  We review some of the notation already introduced in previous parts to make the section more self-contained. Let $f = E_{r+2}(\psi, \tau)$ stand for the Eisenstein series of weight $r+2$ and characters $(\psi,\tau)$, and $f_\beta$ its critical-slope $p$-stabilisation. Under the non-criticality conditions discussed in \cref{equivalences}, there is a unique Coleman family $\hf$ passing through $f_\beta$, over some affinoid disc $V_1 \ni r$. We may suppose that for all integers $k \in V_1 \cap \ZZ_{\ge 0}$ with $k \ne r$, the specialisation $f_k$ is a non-critical-slope cusp form.

Meanwhile, let $\hg$ be a second Coleman family over some disc $V_2$. We suppose for simplicity that $\hg$ is ordinary. Let $V(\hf, \hg)^*$ be the module defined in \cref{es5}. Recall that the Galois representation $V(\hf, \hg)^*$ is characterized by the property that for any integers $(k,\ell,j)$, with $k \in V_1$ and $\ell \in V_2$, we recover \[ V(\hf, \hg)^*(k,\ell,j) = V(f_k)^* \otimes V(g_\ell)^*(-j), \] the $(-j)$-th Tate twist of the tensor product of the dual Galois representations attached to $f_m$ and $g_\ell$, as defined in \cref{rep-hida} (including the case $k = r$, when $f_k = f_\beta$). We define similarly a space $V^c(\hf, \hg)^*$ using $V^c(\hf)^*$ in place of $V(\hf)^*$. Note that these become isomorphic after inverting $X$.

  \subsection{Selmer vanishing}

   The representation $V(\hg)^*$ has a canonical rank-one $G_{\Qp}$-subrepresentation $\cF^+ V(\hg)^*$, with unramified quotient $\cF^- V(\hg)^*$; and we have the following:

   \begin{proposition}
    \label{prop:Selvanish}
    For any integer $n$ and Dirichlet character $\chi$, the ``Greenberg Selmer group''
    \[ H^1_{\mathrm{Gr}}(\QQ, V(\hg)^*(\chi)(n)\otimes \cH_{\Gamma}(-\hj)) \coloneqq \operatorname{ker}\Bigg(H^1(\QQ, V(\hg)^*(\chi)(n) \otimes \cH_{\Gamma}(-\hj) ) \to H^1(\Qp, \cF^-V(\hg)^*(\chi)(n) \otimes \cH_{\Gamma}(-\hj) )\Bigg) \]
    vanishes.
   \end{proposition}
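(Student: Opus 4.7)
The plan is to deduce vanishing of $M := H^1_{\mathrm{Gr}}(\QQ, V(\hg)^*(\chi)(n)\otimes \cH_\Gamma(-\hj))$ by combining Iwasawa-theoretic finiteness with a density argument rooted in Kato's Euler system.

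First I would verify that $M$ is a finitely generated module over the Noetherian ring $R := \cO(V_2)\mathop{\hat\otimes}\cH_\Gamma$; this is standard, using finiteness of Galois cohomology with restricted ramification together with the compactness of the cyclotomic deformation, and is routine for the Hida family $\hg$.

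Next I would show that the specialisation of $M$ vanishes at a Zariski-dense set of arithmetic points. For $P = (\ell, \eta)$ with $\ell \in V_2 \cap \ZZ_{\ge 0}$ an ordinary cuspidal classical weight and $\eta : \Gamma \to \overline{\QQ}_p^{\times}$ a finite-order character, a control theorem of the type developed in \cite{ochiai03} and \cite[\S 4]{KLZ17} produces a specialisation map
\[
M \otimes_R R/\mathfrak{m}_P \longrightarrow H^1_{\mathrm{Gr}}(\QQ, V(g_\ell)^*(\chi)(n)(\eta^{-1})),
\]
whose kernel is controlled by $H^0$-terms of local subquotients of $\cF^-V(\hg)^*$, and vanishes outside a proper Zariski-closed locus (the locus where $\cF^- V(g_\ell)^*(\chi)(n)(\eta^{-1})$ admits non-trivial $G_{\Qp}$-invariants). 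Kato's theorem (cf.~\cite{kato04}) then implies that the target Selmer group vanishes whenever $L(g_\ell,\chi\eta^{-1}, 1+n) \ne 0$; by Rohrlich's non-vanishing theorem, for each such $\ell$ infinitely many $\eta$ enjoy this property, producing a Zariski-dense subset of points at which $M/\mathfrak{m}_P M = 0$.

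Finally, since $M$ is finitely generated over the reduced Noetherian ring $R$, a Noetherian induction on the support forces $M = 0$: if $\mathrm{Supp}(M)$ were non-empty, any of its generic points would dominate a Zariski-dense family of arithmetic specialisations, contradicting the vanishing of $M/\mathfrak{m}_P M$ established above. The main obstacle is making the control theorem sharp enough at generic classical points -- that is, controlling both the kernel and the cokernel of specialisation on Selmer groups. This reduces to a standard but delicate analysis of the local-at-$p$ condition defining the Greenberg Selmer group, using crucially that $\cF^- V(\hg)^*$ is unramified and interpolates well in the cyclotomic variable, so that the local $H^0$ and $H^2$ terms governing the control theorem vanish generically in $(\ell,\eta)$.
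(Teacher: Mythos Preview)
Your argument is essentially correct, but it takes a noticeably more circuitous route than the paper's. The key difference is that you specialise in \emph{both} variables $(\ell,\eta)$, which forces you to invoke Rohrlich's non-vanishing result to obtain density, and then to run a Noetherian-induction/support argument on the two-variable ring together with a careful control theorem for the Greenberg local condition. By contrast, the paper never specialises the cyclotomic variable: for each classical weight $\ell$ it uses Kato's theorem in its full Iwasawa-theoretic form, namely that $H^1(\QQ, V(g_\ell)^*\otimes\cH_\Gamma(-\hj))$ is free of rank $1$ over $\cH_\Gamma$ and is generated (up to torsion) by a class whose image under the regulator for $\cF^-$ is the $p$-adic $L$-function, a non-zero-divisor. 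This immediately forces $H^1_{\mathrm{Gr}}(\QQ, V(g_\ell)^*\otimes\cH_\Gamma(-\hj))=0$ for every such $\ell$, without any appeal to Rohrlich. The density argument is then only in the single weight variable $\hk_2$, and one finishes by observing that the ambient $H^1$ is $\cO(V_2)$-torsion-free (from vanishing of the relevant $H^0$), so any class vanishing at all integer $\ell$ must vanish identically.

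The payoff of the paper's approach is that it bypasses both Rohrlich and the delicate control-theorem analysis you flag as the ``main obstacle''; since the Greenberg condition is compatible with specialisation in $\hk_2$ (being the kernel of a map that commutes with base change in the weight), no control of cokernels is needed. Your approach, while heavier, has the merit of being more ``pointwise'' and would transplant more readily to settings where the Iwasawa-level freeness result is not available.
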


   \begin{proof}
    We may take $n = 0$ and $\chi = 1$ without loss of generality. The result now follows from Kato's theorems \cite{kato04}, which show that for each classical specialisation $g_\ell$ of $\hg$, the module $H^1(\QQ, V(g_\ell)^* \otimes \cH_{\Gamma}(-\hj) )$ is free of rank 1 over $\cH_{\Gamma}$ and contains a canonical element (Kato's Euler system for $g_\ell$) whose localisation at $p$ is mapped to the $p$-adic $L$-function of $g_\ell$ under the Perrin-Riou regulator for $\cF^- V(g_\ell)^*$. Since the $p$-adic $L$-function is not a zero-divisor, we conclude that the space $H^1_{\mathrm{Gr}}(\QQ, V(g_\ell)^*\otimes \cH_{\Gamma}(-\hj))$ vanishes for each such $g_\ell$.

    So any element of $H^1_{\mathrm{Gr}}(\QQ, V(\hg)^*\otimes \cH_{\Gamma}(-\hj))$ must specialise to 0 at a Zariski-dense set of points of $V_2$. On the other hand, this module is contained in the full $H^1$, which is $\cO(V_2)$-torsion-free, by the exact sequence associated to multiplication by an element of $\cO(V_2)$. Hence the Greenberg Selmer group vanishes.
   \end{proof}

   \begin{remark}
    It is slightly irritating that our analysis of the specialisation of Beilinson--Flach elements relies on these Selmer-group bounds, and thus on the existence of Kato's Euler system. This would be an obstacle if we wanted to use the techniques of ``critical-slope Eisenstein specialisations'' to define \emph{new} Euler systems (rather than obtaining relations between existing Euler systems).
   \end{remark}

  \subsection{Families over punctured discs}

  \begin{proposition}
   The cohomology $H^1(\QQ, V(\hf, \hg)^*)$ is a finitely-generated module over $\cO(V_1 \times V_2 \times \cW)$, and this module is $X$-torsion-free, where $X \in \cO(V_1)$ is a uniformizer at $r$.
  \end{proposition}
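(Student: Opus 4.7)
The plan is to handle the two assertions separately.

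For finite generation, I would realise $V(\hf, \hg)^*$ as a Hecke eigenspace in the \'etale cohomology of a product of two modular curves with coefficients in the external tensor product of the overconvergent sheaf $\mathscr{D}'_{V_1}$ (carrying the family $\hf$) and the Eichler--Shimura sheaf attached to $\hg$, twisted by $\cH_\Gamma(-\hj)$. Finite generation of the resulting $H^1$ over $\cO(V_1 \times V_2 \times \cW)$ then follows by the same Iwasawa-theoretic compactness and Shapiro-lemma argument used for Beilinson--Flach elements in \cite[\S 5]{loefflerzerbes16}.

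For the $X$-torsion-freeness, the key observation is that $V(\hf, \hg)^*$ is flat over $\cO(V_1)$, so multiplication by $X$ fits into a short exact sequence of $G_\QQ$-modules
\[ 0 \longrightarrow V(\hf, \hg)^* \xrightarrow{X} V(\hf, \hg)^* \longrightarrow V(\hf, \hg)^*/X \longrightarrow 0, \]
whose long exact sequence identifies the $X$-torsion in $H^1(\QQ, V(\hf, \hg)^*)$ as a quotient of $H^0(\QQ, V(\hf, \hg)^*/X)$. Since $V(\hf, \hg)^*/X \cong V(f_\beta)^* \otimes V(\hg)^*(-\hj)$, the exact sequence \eqref{ses-coh2} exhibiting $V(f_\beta)^*$ as an extension of $\QQ_p(\psi^{-1})$ by $\QQ_p(\tau^{-1})(1+r)$, combined with left-exactness of $H^0$, reduces the problem to showing
\[ H^0\bigl(\QQ,\; V(\hg)^*(\chi)(n)(-\hj)\bigr) = 0 \qquad \text{for } (\chi, n) \in \{(\psi^{-1}, 0),\, (\tau^{-1}, 1+r)\}. \]

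For each such pair, let $R = \cO(V_2) \mathop{\hat\otimes} \cH_\Gamma$ and $M = V(\hg)^*(\chi)(n)(-\hj)$, a free $R$-module of rank $2$ carrying a continuous $G_\QQ$-action. Since $H^0(\QQ, M)$ is an $R$-submodule of the torsion-free module $M$, it vanishes as soon as $(M \otimes_R F)^{G_\QQ} = 0$ for $F = \operatorname{Frac}(R)$. This in turn follows from the generic (and indeed absolute) irreducibility of $V(\hg)^*$: by classical results of Deligne and Ribet, the Galois representation attached to any cuspidal newform of weight $\ge 2$ is absolutely irreducible, so the pseudo-character $t_{\hg}$ is absolutely irreducible at a Zariski-dense set of points of $V_2$, hence generically so. Consequently $V(\hg)^* \otimes_{\cO(V_2)} F$ is an absolutely irreducible $2$-dimensional $F[G_\QQ]$-module, and twisting by the $1$-dimensional character $\chi \cdot \eps^n \cdot (-\hj) : G_\QQ \to F^\times$ preserves irreducibility, so there are no nonzero $G_\QQ$-invariant vectors.

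The main obstacle is the finite generation step, which rests on the geometric realisation of $V(\hf, \hg)^*$ in \'etale cohomology and the corresponding Iwasawa-theoretic finiteness results; once this is granted, the $X$-torsion-freeness is a clean diagram chase using the explicit extension structure of $V(f_\beta)^*$ from \eqref{ses-coh2} together with the generic irreducibility of $\hg$.
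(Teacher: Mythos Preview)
Your argument is correct and follows the same route as the paper: both reduce $X$-torsion-freeness, via the long exact sequence for multiplication by $X$, to the vanishing of $H^0(\QQ, V(\hf,\hg)^*/X) = H^0(\QQ, V(f_\beta)^* \otimes V(\hg)^* \otimes \cH_\Gamma(-\hj))$. The paper simply asserts this vanishing, whereas you supply a justification via the filtration on $V(f_\beta)^*$ and the generic absolute irreducibility of $V(\hg)^*$; the paper's parallel proof in the diagonal-cycle section also mentions an alternative via Hodge--Tate weights at $p$ (as in \cite[Lemma 8.2.6]{KLZ17}), which would work equally well here.
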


  \begin{proof}
   This follows via the exact sequence of cohomology from the vanishing of $H^0(\QQ, V(\hf, \hg)^* / X) = H^0(\QQ, V(f_\beta)^* \otimes V(\hg)^*\otimes \cH_{\Gamma}(-\hj))$.
  \end{proof}

  \begin{theorem}\label{thm:bf-class}
   Fix $d \in \ZZ_{>1}$ such that $(d,6S)=1$.  There exists a cohomology class
   \[ {}_d \kappa(\hf,\hg) \in H^1(\QQ, \tfrac{1}{X} V^c(\hf, \hg)^*), \]
   with the following interpolation property:
   \begin{itemize}
    \item If $(k, \ell)$ are integers $\ge 0$ with $k \ne r$, then we have
    \[ {}_d \kappa(\hf,\hg)(k, \ell) = {}_d \mathcal{BF}^{[f_k, g_\ell]} \in H^1(\QQ, V(f_k)^* \otimes V(g_\ell)^* \otimes \cH_\Gamma(-\hj)),\]
    where the element ${}_d \mathcal{BF}^{[f_k, g_\ell]} = {}_d \mathcal{BF}^{[f_k, g_\ell]}_{1, 1}$ is as defined in Theorem 3.5.9 of \cite{loefflerzerbes16}.
   \end{itemize}
  \end{theorem}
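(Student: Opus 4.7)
The plan is to follow the overconvergent construction of Beilinson--Flach classes from \cite{loefflerzerbes16}, being careful to track what happens at the critical-slope Eisenstein specialisation $k = r$. Over the punctured disc $V_1 \setminus \{r\}$ the claim is immediate from \cite[Theorem 5.4.2]{loefflerzerbes16}: by our standing assumption, all integer-weight specialisations of $\hf$ away from $r$ are cuspidal non-critical eigenforms, so the standard construction produces a class in $H^1(\QQ, V(\hf, \hg)^*|_{V_1\setminus \{r\}})$ interpolating the ${}_d \mathcal{BF}^{[f_k, g_\ell]}$ at such points.

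To extend across $k = r$ I would revisit the construction, which begins from a motivic Beilinson element on $Y \times Y$, goes through the \'etale regulator into $H^1$ of a nearly-overconvergent product sheaf of the shape $(\mathscr{D}'_{V_1 - (r+1)} \otimes \operatorname{TSym}^{r+1}) \boxtimes \mathscr{D}'_{V_2}$ (with appropriate twist), and finishes by applying the overconvergent projector $\Pr^{[r+1]}$ on the first factor and projecting onto the $(\hf, \hg)$-eigencomponent. As recalled in the nearly overconvergent section, $\Pr^{[r+1]}$ has simple poles at locally algebraic characters of degree $0, \dots, r$; the bad weight $k = r$ lies in this range, so the projector's output lies a priori in $\tfrac{1}{X} \mathscr{D}'_{V_1}$ locally near $r$. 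Passing to cohomology and taking the $(\hf, \hg)$-eigencomponent, this produces a class in $H^1(\QQ, \tfrac{1}{X} V(\hf, \hg)^*)$.

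The crucial refinement is that this class lies in fact in the smaller module $\tfrac{1}{X} V^c(\hf, \hg)^*$. This follows from the key property already exploited in the nearly overconvergent section, namely that the residue of $\Pr^{[r+1]}$ at the pole $k = r$ factors through the kernel of the specialisation map $\rho_r: \mathscr{D}'_r \to \mathscr{V}_r^*$, i.e.~through $\theta^{r+1}(\mathscr{D}'_{-2-r})\otimes \Qp(r+1)$. After taking the $(f_\beta, \hg)$-eigencomponent, the residue therefore lands in the subrepresentation $\Qp(\tau^{-1})(1+r) \otimes V(\hg)^*$ of $V(f_\beta)^* \otimes V(\hg)^*$, which by \eqref{ses-coh2c} is precisely the image of $V^c(f_\beta)^* \otimes V(\hg)^*$. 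Combined with the identification of $V(\hf)^*$ as a submodule of $\tfrac{1}{X} V^c(\hf)^*$ coming from the chain \eqref{chain}, this yields a class in $H^1(\QQ, \tfrac{1}{X} V^c(\hf, \hg)^*)$ as claimed. The interpolation at $k \ne r$ is then automatic: $X$ is a unit at such points, so $\tfrac{1}{X} V^c(\hf, \hg)^*$ specialises to $V(f_k)^* \otimes V(g_\ell)^*(-\hj)$, and our class specialises to the standard Beilinson--Flach class by construction.

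The main obstacle is the refinement in the preceding paragraph, namely checking that the residue of $\Pr^{[r+1]}$ at $k = r$ genuinely lands, after projecting to the $(f_\beta, \hg)$-isotypic component of $H^1$, in the submodule coming from $V^c$ rather than only in the ambient $\tfrac{1}{X}V$. This amounts to a compatibility between the overconvergent projector and the map $\theta^{r+1}$ in the critical Eisenstein fibre, together with the explicit description of $V^c(f_\beta)^* \hookrightarrow V(f_\beta)^*$ as the inclusion of the $\Qp(\tau^{-1})(1+r)$ piece. No further deformation-theoretic input from the Euler system itself is required; the $X$-torsion-freeness proved in the preceding proposition then guarantees that the extension from the punctured disc is controlled.
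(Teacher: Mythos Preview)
Your proof is correct and follows essentially the same approach as the paper: run the construction of \cite[Theorem 5.4.2]{loefflerzerbes16}, note that the overconvergent projector $\Pr^{[r+1]}$ acquires a simple pole at $k=r$ (so one must invert $X$), and use the fact that its residue is valued in the kernel of specialisation to upgrade the target from $\tfrac{1}{X}V(\hf,\hg)^*$ to $\tfrac{1}{X}V^c(\hf,\hg)^*$. The paper's own proof is much terser---it simply observes that Proposition~5.2.5 of \emph{op.\,cit.}\ fails at $k=r$ and that inverting $X$ cures this---and relies implicitly on the discussion in the nearly-overconvergent-cohomology section for the $V^c$ refinement that you have spelt out explicitly.
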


  \begin{proof}
   Compare \cite[Theorem 5.4.2]{loefflerzerbes16}, which is an analogous result when all integer-weight specialisations of $\hg$ are classical. In the present situation, we must be a little more circumspect, since Proposition 5.2.5 of \emph{op.cit.} does not apply for $k = r$; so the map denoted $\operatorname{pr}_{\hg}^{[j]}$ there is not defined for $j = k + 1$. However, inverting $X$ gets rid of this problem.
  \end{proof}

  \subsection{Local properties at $p$}

   \begin{proposition}
    The image of $\operatorname{loc}_p\left({}_d \kappa(\hf,\hg)\right)$ in $H^1\left(\Qp, \tfrac{1}{X}\cF^{--} D^c(\hf, \hg)^*\right)$ is zero.
   \end{proposition}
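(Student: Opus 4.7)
The plan is to reduce the claim to the known vanishing statement for cuspidal, non-critical-slope classical specialisations of $\hf$, and then invoke a density argument on the parameter space.

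First, I would observe that for each integer $k \in V_1 \cap \ZZ_{\ge 0}$ with $k \ne r$, the specialisation $f_k$ is by hypothesis a cuspidal eigenform of non-critical slope; at such $k$ the natural map $V^c(\hf)^* \to V(\hf)^*$ is an isomorphism in the fibre, and $\cF^{--} D^c(\hf,\hg)^*$ specialises cleanly to the usual $\cF^{--} D(f_k,\hg)^*$. The interpolation property in the previous theorem identifies ${}_d\kappa(\hf,\hg)(k)$ with the classical Beilinson--Flach element ${}_d\mathcal{BF}^{[f_k,\hg]}$, so Theorem~7.1.2 of \cite{loefflerzerbes16} (applied to the cuspidal family $\hg$ paired with the classical form $f_k$) gives that the projection of $\loc_p {}_d\kappa(\hf,\hg)(k)$ to the $\cF^{--}$ component is zero for every such $k$.

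Next, I would apply the two-variable Perrin-Riou regulator attached to the rank-one unramified $(\varphi,\Gamma)$-module $\cF^{--} D^c(\hf,\hg)^*$ over $\cO(V_1 \times V_2)$; after inverting $X$, the associated $H^1$ with cyclotomic distribution coefficients embeds under this regulator into the module of $\cH_\Gamma$-valued rigid analytic functions on $(V_1 \setminus \{r\}) \times V_2$. Injectivity uses that the two $\varphi$-eigenvalues are analytic units bounded away from the bad values of the regulator. The image of $\loc_p {}_d\kappa(\hf,\hg)$ is then a rigid analytic section vanishing on the Zariski-dense subset $\{k \in V_1 \cap \ZZ_{\ge 0}: k \ne r\} \times V_2 \times \cW$, hence vanishing identically; the injectivity of the regulator then yields the claim.

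The main obstacle is the behaviour at the Eisenstein point $k = r$, where the triangulation of $D^c(\hf)^*$ becomes singular, as described in Section~8, so one cannot naively specialise $\cF^{--} D^c(\hf,\hg)^*$ at $r$ to obtain the expected fibre. Inverting $X$ is precisely what bypasses this pathology by restricting attention to the punctured disc $V_1 \setminus \{r\}$, on which the triangulation is honest; this is also why the Beilinson--Flach class itself only extends over the full disc $V_1$ after inverting $X$. A secondary technical point to verify is that the $\cF^{--}$-projection of cohomology commutes with specialisation at dense classical points away from $r$, which follows from the exact-sequence structure of the triangulation together with the torsion-freeness of the relevant cohomology groups established by the preceding proposition.
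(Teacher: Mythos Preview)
Your proposal is correct and follows essentially the same strategy as the paper: reduce to the known $\cF^{--}$-vanishing for the cuspidal specialisations $f_k$ with $k \ne r$ (via \cite[Theorem~7.1.2]{loefflerzerbes16}), and then use a density argument to deduce vanishing over the whole parameter space. The paper's proof is more terse, invoking directly that the Iwasawa cohomology $H^1\bigl(\Qp, \tfrac{1}{X}\cF^{--} D^c(\hf,\hg)^*\bigr)$ is torsion-free over $\cO(V_1 \times V_2 \times \cW)[1/X]$, rather than passing through the Perrin--Riou regulator; your regulator argument is a valid (if slightly longer) way to establish the same injectivity, so the two are essentially equivalent.
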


   \begin{proof}
    This follows from the fact that the Iwasawa cohomology is torsion-free, and the specialisations away from $X = 0$ have the required vanishing property.
   \end{proof}

  \subsection{Leading terms at $X = 0$}

In the following proposition, we consider the quotient $\frac{\tfrac{1}{X} V^c(\hf, \hg)^*}{V(\hf, \hg)^*}$, which makes sense by the discussion of Section \ref{inclusions} leading to the chain of inclusions \eqref{chain}.

   \begin{proposition}
    The image of ${}_d \kappa(\hf,\hg)$ in the cohomology of the quotient
    \[ \frac{\tfrac{1}{X} V^c(\hf, \hg)^*}{V(\hf, \hg)^*} \cong \Qp(\tau^{-1})(1+r) \otimes V(\hg)^* \otimes \cH_{\Gamma}(-\hj) \]
    is zero.
   \end{proposition}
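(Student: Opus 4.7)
The plan is to deduce this vanishing from Proposition \ref{prop:Selvanish} by showing that the projected class lies in a Greenberg Selmer group that is known to vanish. Specifically, Proposition \ref{prop:Selvanish} applied with $\chi = \tau^{-1}$ and $n = 1+r$ gives
\[
H^1_{\mathrm{Gr}}(\QQ, \Qp(\tau^{-1})(1+r) \otimes V(\hg)^* \otimes \cH_\Gamma(-\hj)) = 0,
\]
so it will suffice to check that the localisation at $p$ of the projected class, pushed forward via $V(\hg)^* \twoheadrightarrow \cF^- V(\hg)^*$, vanishes in $H^1(\Qp, \Qp(\tau^{-1})(1+r) \otimes \cF^- V(\hg)^* \otimes \cH_\Gamma(-\hj))$.

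The key step is a factorisation argument at the $(\varphi,\Gamma)$-module level. The $G_\QQ$-equivariant surjection $\tfrac{1}{X}V^c(\hf)^* \twoheadrightarrow \Qp(\tau^{-1})(1+r)$ has kernel $V(\hf)^*$, so passing to $(\varphi,\Gamma)$-modules one obtains a surjection $\tfrac{1}{X} D^c(\hf)^* \twoheadrightarrow \mathbf{D}^\dag(\Qp(\tau^{-1})(1+r))$ with kernel $D(\hf)^*$. The identity $\cF^+ D^c(\hf)^* = X \cdot \cF^+ D(\hf)^*$ from the earlier discussion then gives $\tfrac{1}{X} \cF^+ D^c(\hf)^* = \cF^+ D(\hf)^* \subset D(\hf)^*$, so this surjection factors through $\tfrac{1}{X} \cF^- D^c(\hf)^*$. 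Tensoring with the natural projection $V(\hg)^* \twoheadrightarrow \cF^- V(\hg)^*$ produces a factorisation of the desired local map
\[
H^1\!\bigl(\Qp, \tfrac{1}{X} V^c(\hf, \hg)^*\bigr) \longrightarrow H^1\!\bigl(\Qp, \tfrac{1}{X} \cF^{--} D^c(\hf, \hg)^*\bigr) \longrightarrow H^1\!\bigl(\Qp, \Qp(\tau^{-1})(1+r) \otimes \cF^- V(\hg)^* \otimes \cH_\Gamma(-\hj)\bigr).
\]

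By the previous proposition the image of ${}_d \kappa(\hf,\hg)$ in the middle group vanishes, hence so does its image in the right-hand group; this places the projection into the Greenberg Selmer group, and invoking the Selmer-group vanishing concludes the proof. The main obstacle I foresee is a potential confusion arising from the filtration shift at the Eisenstein point: the quotient $\Qp(\tau^{-1})(1+r)$ appears as the \emph{sub}-representation of $V(f_\beta)^*$ rather than its quotient, so one might naively expect its $(\varphi,\Gamma)$-module to project out of $\cF^+$ rather than $\cF^-$. It is precisely the twisted identification $\cF^+ D^c(\hf)^* = X \cdot \cF^+ D(\hf)^*$, combined with the chain \eqref{chain}, that reverses this expectation and routes the factorisation through $\cF^{--}$ so that the previous proposition can be applied.
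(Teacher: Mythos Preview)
Your proof is correct and follows essentially the same approach as the paper: both arguments show that the local projection to $\cF^- V(\hg)^*$ factors through $\tfrac{1}{X}\cF^{--} D^c(\hf,\hg)^*$ (where the class vanishes by the preceding proposition), and then conclude via the vanishing of the Greenberg Selmer group from \cref{prop:Selvanish}. The paper phrases the key factorisation via the injection $\mathbf{D}^\dag(V^c(f_\beta)^*_{\mathrm{quo}}) \hookrightarrow \cF^- D^c(f_\beta)^*$, whereas you phrase it via the containment $\tfrac{1}{X}\cF^+ D^c(\hf)^* = \cF^+ D(\hf)^* \subset D(\hf)^*$; these are two ways of reading off the same structural fact from the ``cross'' diagram for $D^c(\hf)^*$.
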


   \begin{proof}
   Firstly, observe that the isomorphism of the statement follows from Section \ref{inclusions}. We consider the projection of this class to the local cohomology at $p$ of the quotient $\cF^- V(\hg)^*$. Since the $(\varphi, \Gamma)$-module of $V^c(f_\beta)^*_{\mathrm{quo}}$ injects into $\cF^-D^c(f_\beta)$, this projection is 0, by \cref{prop:Selvanish}. Hence the global class lands in the Greenberg Selmer group, which is zero, as we have seen.
   \end{proof}

   \begin{corollary}
    The class ${}_d \kappa(\hf,\hg)$ lifts (uniquely) to $H^1(\QQ, V(\hf, \hg)^*)$, and thus has a well-defined image
    \[ {}_d \hat{\kappa}(f_\beta, \hg) \in H^1(\QQ, \Qp(\psi^{-1}) \otimes V(\hg)^* \otimes \cH_{\Gamma}(-\hj)).\]
   \end{corollary}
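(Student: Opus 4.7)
The plan is to apply the long exact sequence in Galois cohomology associated to the short exact sequence of coefficient modules
\[ 0 \longrightarrow V(\hf,\hg)^* \longrightarrow \tfrac{1}{X} V^c(\hf,\hg)^* \longrightarrow Q \longrightarrow 0, \]
where the identification $Q \cong \Qp(\tau^{-1})(1+r) \otimes V(\hg)^* \otimes \cH_\Gamma(-\hj)$ of the quotient is exactly what the preceding proposition delivers (its shape being dictated by the filtration \eqref{chain} relating $V^c(\hf)^*$ and $V(\hf)^*$). Since that proposition also asserts the vanishing of the image of ${}_d\kappa(\hf,\hg)$ in $H^1(\QQ, Q)$, exactness immediately produces some lift in $H^1(\QQ, V(\hf,\hg)^*)$.

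For the uniqueness clause, the ambiguity of the lift is controlled by the image of the connecting map $H^0(\QQ, Q) \to H^1(\QQ, V(\hf,\hg)^*)$, so it suffices to show $H^0(\QQ, Q) = 0$; this is the only point where a non-formal input is needed. The module $Q$ is $\cO(V_2 \times \cW)$-torsion-free, being a tensor product of $V(\hg)^*$ (free of rank $2$ over $\cO(V_2)$) with $\cH_\Gamma(-\hj)$ and a fixed one-dimensional twist, so its submodule of Galois invariants is torsion-free as well. To see that the invariants vanish, I would specialise at classical points $(\ell, j) \in V_2 \times \cW$ with $\ell, j \in \ZZ_{\ge 0}$, where the fibre becomes a character twist of $V(g_\ell)^*$; since $V(g_\ell)^*$ is an irreducible two-dimensional $G_\QQ$-representation for every such $\ell$, no nonzero Galois-invariant vector can exist in any such twist. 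Hence $H^0(\QQ, Q)$ is supported on a proper closed subset of the parameter space, and torsion-freeness forces it to vanish.

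Having produced the unique lift in $H^1(\QQ, V(\hf,\hg)^*)$, the remaining step is to specialise at $X = 0$, which yields a class in $H^1(\QQ, V(f_\beta)^* \otimes V(\hg)^* \otimes \cH_\Gamma(-\hj))$, and then to push forward along the canonical Galois-equivariant surjection $V(f_\beta)^* \twoheadrightarrow \Qp(\psi^{-1})$ coming from \eqref{ses-coh2}; this directly produces the asserted class ${}_d\hat\kappa(f_\beta,\hg)$. The main (and only genuinely nontrivial) obstacle is the $H^0$-vanishing above; once this is in place, everything else is transparent bookkeeping with the long exact sequence and the explicit structure of the filtration \eqref{chain}.
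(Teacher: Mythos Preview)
Your proposal is correct and matches the paper's intended argument; the paper states this as an unproved Corollary, and your long-exact-sequence reasoning is exactly what is implicit. One small efficiency: the $H^0(\QQ,Q)=0$ step you argue directly is already contained in the paper's earlier proof that $H^1(\QQ,V(\hf,\hg)^*)$ is $X$-torsion-free, since $Q$ (after multiplying by $X$) embeds in $V(\hf,\hg)^*/X$, whose $H^0$ was shown there to vanish --- equivalently, uniqueness follows immediately from that $X$-torsion-freeness, as any two lifts differ by an $X$-torsion class.
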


For the following result, recall the logarithmic distribution, as introduced for instance in \cite[\S1]{bellaichedasgupta15}: for a continuous character $\sigma : \ZZ_p^{\times} \rightarrow \CC_p$, the function $\frac{d^k \sigma}{dz^k} \cdot \frac{z^k}{\sigma(z)}$ is a constant, and $\log^{[k]} \in \CC_p$ is defined to be this constant.

   \begin{proposition}\label{div-log}
    The class ${}_d \hat{\kappa}(f_\beta, \hg)$ is divisible by the (cyclotomic) logarithm distribution $\log^{[r+1]} \in \cH_{\Gamma}$.
   \end{proposition}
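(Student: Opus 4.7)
The plan is to reduce the divisibility to a local statement at $p$, using Greenberg Selmer vanishing, and then to identify $\log^{[r+1]}$ as arising from the $t^{r+1}$ filtration shift in the $(\varphi,\Gamma)$-module $D(\hf)^*$ at the Eisenstein fibre.

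For the global-to-local reduction, I apply \cref{prop:Selvanish} to $V(\hg)^*\otimes\Qp(\psi^{-1})$, which is still ordinary with unramified quotient $\cF^-V(\hg)^*\otimes\Qp(\psi^{-1})$. The associated Greenberg Selmer group therefore vanishes, so ${}_d\hat{\kappa}(f_\beta,\hg)$ is determined by its image
\[ \loc_p^-\bigl({}_d\hat{\kappa}(f_\beta,\hg)\bigr)\in H^1\bigl(\Qp,\,\cF^-V(\hg)^*\otimes\Qp(\psi^{-1})\otimes\cH_\Gamma(-\hj)\bigr), \]
and since $\log^{[r+1]}$ is not a zero divisor in $\cH_\Gamma$ and the Perrin-Riou regulator $\mathcal L^\Gamma$ has $\cH_\Gamma$-torsion-free image on crystalline representations, divisibility by $\log^{[r+1]}$ can be verified after applying $\mathcal L^\Gamma$.

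For the local computation, I follow the reciprocity law \eqref{eq:BFrecip} through the specialisation at $k=r$. Over the punctured disc, the image of the lifted class under the Coleman map $\operatorname{Col}_{\eta_\hf\otimes\omega_\hg}$ equals $C_d\cdot L_p^\hf(\hf,\hg)$, with $\eta_\hf$ spanning $\Dcris(\cF^+D(\hf)^*(-1-\hk))$. Projecting the global class in $H^1(\QQ,V(\hf,\hg)^*)$ onto the Galois quotient $\Qp(\psi^{-1})\otimes V(\hg)^*$ corresponds, on the $(\varphi,\Gamma)$-module side, to the composition
\[ \cF^+ D(f_\beta)^*\hookrightarrow D(f_\beta)^*\twoheadrightarrow \mathbf{D}^\dag(V(f_\beta)^*_{\mathrm{quo}}). \]
By the diagram in the section on triangulations of $D(\hf)^*$, this composition has image exactly $t^{r+1}\cdot\mathbf{D}^\dag(V(f_\beta)^*_{\mathrm{quo}})$; hence the local class factors through the sub-$(\varphi,\Gamma)$-module obtained by twisting the target by $t^{r+1}$. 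The standard Perrin-Riou compatibility then gives a factor of $\log^{[r+1]}$, since the inclusion $t^{r+1}D'\hookrightarrow D'$ induces multiplication by $\log^{[r+1]}$ on regulators.

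The main obstacle in the plan will be the careful accounting between $V^c(\hf,\hg)^*$ and $V(\hf,\hg)^*$. The class ${}_d\kappa(\hf,\hg)$ is originally constructed inside $\tfrac{1}{X}V^c(\hf,\hg)^*$, and the previous proposition lets us lift it to $V(\hf,\hg)^*$; one must check that after this lifting the Coleman-map image of the class at $f_\beta$ lies genuinely inside the $t^{r+1}$-submodule of the target, with no extra compensating factors arising from the uniformiser $X$ via the chain \eqref{chain}.
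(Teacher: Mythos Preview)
Your approach is genuinely different from the paper's, and has one real gap.

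The paper does not reduce to a local statement at all. Instead it shows directly that the specialisation of ${}_d\hat\kappa(f_\beta,\hg)$ at every zero of $\log^{[r+1]}$ vanishes: for each locally-algebraic character of degree $j\in\{0,\dots,r\}$, the overconvergent projectors $\Pr^{[j]}_{\hf}$ and $\Pr^{[r+1]}_{\hf}$ agree (up to scalar) on the relevant image, and the former has no pole at $X=0$. Hence these specialisations compute the projection of the \emph{classical} Beilinson--Flach class to the Eisenstein eigenspace in classical cohomology; this projection is zero because the Beilinson--Flach classes lift to $X_1(N)\times Y_1(N)$ (compactly supported in the first variable). Zariski density and torsion-freeness finish. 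No Selmer vanishing, reciprocity law, or $(\varphi,\Gamma)$-module analysis is used.

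Your local argument --- lifting the $\cF^{\circ-}$ image to $\cF^{+-}$ and then observing that $\cF^+D(f_\beta)^*\to\mathbf D^\dag(V(f_\beta)^*_{\mathrm{quo}})$ has image $t^{r+1}\cdot(\text{target})$ --- is correct, and indeed the paper uses exactly this $t^{r+1}$ shift later when analysing the Perrin-Riou regulator. The gap is in your global-to-local reduction: Greenberg Selmer vanishing gives that $\loc_p^-$ is \emph{injective}, but injectivity does not by itself pull back divisibility. If $\loc_p^-({}_d\hat\kappa)=\log^{[r+1]}\cdot y$ for some local $y$, you must show $y$ lifts globally. This can be repaired by invoking the stronger fact (implicit in the proof of \cref{prop:Selvanish}, via Kato) that the global Iwasawa $H^1$ is free of rank $1$, so divisibility of its image in a free rank-$1$ local module transfers back; but ``torsion-free image of $\mathcal L^\Gamma$'' as you state it is not enough. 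The paper's geometric argument sidesteps this entirely, and has the further advantage of not depending on Kato's Euler system --- a point the paper explicitly flags as desirable in the remark following \cref{prop:Selvanish}.
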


   \begin{proof}
    The specialisation of ${}_d \kappa(\hf,\hg)$ at a locally-algebraic character of $\Gamma$ of degree $j \in \{0, \dots, r\}$ factors through the image of $\sD_{U - j} \otimes \operatorname{TSym}^j$ in $\sD_{U - (r+1)} \otimes \operatorname{TSym}^{(r+1)}$, and the maps $\Pr_{\hf}^{[j]}$ and $\Pr_{\hf}^{[r+1]}$ agree on this image up to a non-zero scalar. Since the $\Pr_{\hf}^{[j]}$ for $0 \le j \le r$ do not have poles at $X = 0$, it follows that the specialisations of ${}_d \kappa(\hf,\hg)$ at triples $(r, \ell, \chi)$, for $\ell \ge r$ and $\chi$ locally-algebraic of degree $\in \{0, \dots, r\}$, interpolate the projections of the classical Beilinson--Flach classes to the $(E_{r+2}^{\crit}, g_\ell)$-eigenspaces in classical cohomology. Since the Beilinson--Flach classes arise as suitable projections of classes in the cohomology of $X_1(N) \times Y_1(N)$, these projections are always 0. By Zariski-density, the class specialises to 0 everywhere in $\{r \} \times V_2 \times \{\chi\}$. Since this holds for all $\chi$ of degree up to $r$, and these are exactly the zeroes of $\log^{[r+1]}$, the result follows.
   \end{proof}

   Since the Iwasawa cohomology is torsion-free, there is a unique class
   \[ {}_d \kappa(f_\beta, \hg) \in H^1(\QQ, V(\hg)^*(\psi^{-1}) \otimes \cH_{\Gamma}(-\hj))\]
   such that
   \[ {}_d \hat{\kappa}(f_\beta,\hg) = \log^{[r+1]} \cdot {}_d \kappa(f_\beta,\hg). \]
   Moreover, since $\hg$ is ordinary, the class ${}_d \kappa(f_\beta,\hg)$ has bounded growth and hence lies in $H^1(\QQ, V(\hg)^*(\psi^{-1}) \otimes \Lambda_{\Gamma}(-\hj))$, where $\Lambda_{\Gamma}$ is the cyclotomic Iwasawa algebra. (More generally, we could carry this out with a non-ordinary family $\hg$, and we would obtain a class with growth of order equal to the slope of $\hg$.)

 \subsection{The $p$-adic $L$-function}

We consider the `$\hg$-dominant' $p$-adic $L$-function $L_p^{\hg}(\hf, \hg)$ over $V_1 \times V_2 \times \cW$. The interpolation property applies also at $k = r$ without any special complications; and here the complex $L$-function factors as
  \begin{equation}
   \label{eq:Artin}
   L(E_{r+ 2}(\psi, \tau), g_\ell, 1 + j) = L(g_\ell, \psi, 1+j) \cdot L(g_\ell, \tau, j - r).
  \end{equation}
  Note that both factors on the right-hand side are critical values. Thus the restriction of $L_p^{\hg}(\hf, \hg)$ to the $k = r$ fibre is uniquely determined by its interpolation property at crystalline points (we don't need finite-order twists), and we have an ``Artin formalism'' factorisation
  \[ L_p^{\hg}(E_{r + 2}(\psi, \tau), \hg)(\hj) = \frac{L_p(\hg \times \psi, \hj)\cdot  L_p(\hg \times \tau, \hj - 1 - r)}{L_p(\operatorname{Ad} \hg)}, \]
  where the denominator arises from the choice of periods.

 \subsection{Perrin-Riou maps}

  We want to relate $L_p^{\hg}(\hf, \hg)$ to the image of $\operatorname{loc}_p({}_d \kappa(\hf,\hg))$ under the projection to $\cF^- V(\hg)^*$. As discussed in \cref{reclaw-bf}, this factors through the natural map
  \[ H^1_{\mathrm{Iw}}(\QQ_{p, \infty}, \cF^{+-} D(\hf, \hg)^*) \to H^1_{\mathrm{Iw}}(\QQ_{p, \infty}, \cF^{\circ -} D(\hf, \hg)^*),\]
  which is injective (since $H^0_{\mathrm{Iw}}(\cF^{--})$ will be zero). Perrin-Riou's regulator gives us a map
  \[
   \operatorname{Col}_{\mathbf{b}_{\hf}^+ \otimes \eta_{\hg}} = \left\langle \mathcal{L}^{\mathrm{PR}}_{\cF^{+-}}(-), \mathbf{b}_{\hf}^+ \otimes \eta_{\hg}\right\rangle :
   H^1_{\mathrm{Iw}}(\QQ_{p, \infty}, \cF^{+-} D(\hf, \hg)^*) \to \cO(V_1 \times V_2 \times \cW)
  \]
  which interpolates the Perrin-Riou regulators for $f_k \times g_\ell$ for varying $(k, \ell)$.

  Let $\varphi^{-1}$ stand for the left inverse of the Frobenius, denoted as $\psi$ in \cite[\S8.2]{KLZ17}. More precisely, proceeding as in \emph{loc.cit.} and using Fontaine isomorphism, for $z \in \left(\cF^{+-} D(\hf, \hg)^*\right)^{\varphi^{-1} = 1}$, this map sends $z$ to
  \[ \langle\iota((1 - \varphi) z), \mathbf{b}_{\hf}^+ \otimes \eta_{\hg}\rangle, \]
  where $\iota$ is the inclusion
  \[
   \left(\cF^{+-} D(\hf, \hg)^*\right)^{\varphi^{-1} = 0} \into \left(\cF^{+-} D(\hf, \hg)^*[1/t]\right)^{\varphi^{-1} = 0}
   = \Dcris\left(\cF^{+}D(\hf)^*(-1-\hk)\right)\otimes \Dcris\left(\cF^{-}D(\hg)^*\right) \otimes \cH_{\Gamma}.
  \]
  For the following discussion, recall the constants introduced in \ref{subsec:periods}. At the bad fibre, we have the relation
  \[ \mathbf{b}^+_{\hf} \bmod X = c_r t^{r+1} \eta_{f_r}^{\alpha} \otimes e_{-(r + 1)} \]
  where $e_n$ is the standard basis of $\Zp(n)$ and $c_r$ is a nonzero constant. Since multiplication by $t^{r+1}$ corresponds to multiplication by $\log^{[r + 1]}$ on the $\cH_{\Gamma}$ side, we conclude that
  \[ \operatorname{Col}_{\mathbf{b}_{\hf}^+ \otimes \eta_{\hg}}({}_d \kappa(\hf, \hg)) \bmod X = c_r \left\langle \mathcal{L}^{\mathrm{PR}}_{\cF^- V(\hg)^*(\psi^{-1})}({}_d \kappa(E_{\ell+2}^{\crit},\hg)), \eta_{f_k}^{\alpha} \otimes \eta_{\hg}\right\rangle.
  \]

  \begin{theorem}
   We have
   \[ \operatorname{Col}_{\mathbf{b}_{\hf}^+ \otimes \eta_{\hg}}({}_d \kappa(\hf, \hg)) = c_{\hf}(\hk) \cdot C_d(\hf,\hg, \hj) \cdot  L_p^{\hg}(\hf, \hg), \]
   where $c_{\hf}(\hk)$ is a meromorphic function on $V_1$ alone, regular and non-vanishing at all integer weights $k \ge -1$ except possibly at $k = r$, where it is regular.
  \end{theorem}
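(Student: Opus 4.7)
The plan is to reduce to the classical Beilinson--Flach reciprocity law on the punctured disc $V_1 \setminus \{r\}$, and then extend the resulting identity holomorphically across the critical weight $\hk = r$.

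First I would work on the punctured disc $V_1 \setminus \{r\}$. There every classical specialisation of $\hf$ is a non-critical cuspidal newform, so the interpolating modular-form differential $\omega_{\hf}$ exists. Both $\mathbf{b}_{\hf}^+$ and $\omega_{\hf}$ are bases of the same rank-one $\cO(V_1 \setminus \{r\})$-module $\Dcris(\cF^+ D(\hf)^*(-1-\hk))$, so their ratio defines a rigid analytic scalar $c_{\hf}(\hk) \in \cO(V_1 \setminus \{r\})^\times$, which depends tautologically on $\hk$ alone and specialises to $c_{f_k} \in L^\times$ at each integer $k \ne r$. Substituting $\mathbf{b}_{\hf}^+ = c_{\hf}(\hk) \omega_{\hf}$ and invoking the $\hg$-dominant Beilinson--Flach reciprocity law of \cite[Theorem 7.1.5]{loefflerzerbes16} then proves the desired identity on $(V_1 \setminus \{r\}) \times V_2 \times \cW$.

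Next I would extend $c_{\hf}(\hk)$ holomorphically across $\hk = r$. The key observation is that both sides of the claimed identity extend to functions holomorphic on the full product $V_1 \times V_2 \times \cW$: the right-hand side because $C_d$ and $L_p^{\hg}(\hf, \hg)$ are already holomorphic on $V_1 \times V_2 \times \cW$, and the left-hand side because the computation immediately preceding the theorem exhibits its restriction to the critical fibre as the finite quantity $c_r \operatorname{Col}_{\eta_{f_\beta} \otimes \eta_{\hg}}({}_d \kappa(f_\beta, \hg))$---the $t^{r+1}$ factor arising in $\mathbf{b}_{\hf}^+ \bmod X$ being exactly absorbed by the $\log^{[r+1]}$ factor in the relation $\hat\kappa = \log^{[r+1]} \kappa$. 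Consequently $c_{\hf}(\hk)$ extends at worst meromorphically to $V_1$, and this extension is holomorphic at $\hk = r$ unless $C_d \cdot L_p^{\hg}(\hf, \hg)$ vanishes identically along $\{r\} \times V_2 \times \cW$. The Artin-formalism factorisation
\[
 L_p^{\hg}(E_{r+2}(\psi,\tau), \hg)(\hj) = \frac{L_p(\hg \times \psi, \hj) \cdot L_p(\hg \times \tau, \hj - 1 - r)}{L_p(\operatorname{Ad} \hg)},
\]
which is not identically zero on $V_2 \times \cW$, rules out that possibility and completes the argument.

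The main obstacle, in my view, is the careful bookkeeping on the bad fibre: tracking how the $t^{r+1}$ discrepancy between the sub and quotient pieces of $\cF^+ D(f_\beta)^*$ matches the $\log^{[r+1]}$ factor produced when passing from ${}_d \kappa(\hf,\hg) \in H^1(\tfrac{1}{X} V^c)$ to its lift $\hat\kappa = \log^{[r+1]} \kappa$ in $H^1(V)$. All the ingredients have been assembled in the sections on $p$-adic Hodge theory and on leading terms at $X = 0$, but the verification is delicate and easy to mishandle in the twists and sign conventions.
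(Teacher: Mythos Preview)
Your strategy is essentially the paper's, but with two organisational differences worth flagging.

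First, you invoke an interpolating differential $\omega_{\hf}$ over the punctured disc $V_1 \setminus \{r\}$ and define $c_{\hf} = \mathbf{b}_{\hf}^+ / \omega_{\hf}$. In the paper's logical order, however, $\omega_{\hf}$ does not yet exist at this point: it is constructed \emph{after} this theorem, in the ``Meromorphic Eichler--Shimura'' subsection, precisely as the quotient $\mathbf{b}_{\hf}^+ / c_{\hf}$. The paper instead defines $c_{\hf}$ directly as the ratio
\[
 \operatorname{Col}_{\mathbf{b}_{\hf}^+ \otimes \eta_{\hg}}({}_d\kappa(\hf,\hg)) \big/ \bigl(C_d(\hf,\hg,\hj) \cdot L_p^{\hg}(\hf,\hg)\bigr),
\]
and argues that this ratio depends on $\hk$ alone because on each classical fibre $\{k\} \times V_2 \times \cW$ with $k \ne r$ it collapses, via the pointwise reciprocity law of \cite{loefflerzerbes16}, to the scalar $c_k$ with $\mathbf{b}_{\hf}^+|_k = c_k\, \omega_{f_k}$. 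Your version would be fine if an interpolating $\omega_{\hf}$ on the cuspidal locus were already available independently, but in this paper that is a \emph{consequence} of the present theorem rather than an input to it; as written your argument is mildly circular.

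Second, your justification that the left-hand side is holomorphic at $\hk = r$ is more elaborate than needed. You have already established that ${}_d\kappa(\hf,\hg)$ lifts from $H^1(\QQ, \tfrac{1}{X}V^c(\hf,\hg)^*)$ to $H^1(\QQ, V(\hf,\hg)^*)$; once this is known, $\operatorname{loc}_p$ lands in the cohomology of $D(\hf,\hg)^*$, its projection to $\cF^{\circ -}$ factors through $\cF^{+-}$, and the Coleman map $\operatorname{Col}_{\mathbf{b}_{\hf}^+ \otimes \eta_{\hg}}$ on that module takes values in $\cO(V_1 \times V_2 \times \cW)$ by construction. No tracking of the $t^{r+1}$ versus $\log^{[r+1]}$ cancellation is required here (that bookkeeping matters for the \emph{next} step, interpreting the value at the bad fibre, not for mere regularity). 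The paper then concludes regularity of $c_{\hf}$ at $r$ exactly as you do, from the non-vanishing of $L_p^{\hg}(\hf,\hg)$ along $\{r\} \times V_2 \times \cW$.
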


  \begin{proof}
   It follows easily from the reciprocity laws of Theorem \ref{thm:rec-law-cycles} that the quotient \[ \operatorname{Col}_{\mathbf{b}_{\hf}^+ \otimes \eta_{\hg}}({}_d \kappa(\hf, \hg)) / \left(C_d(\hf,\hg, \hj) L_p^{\hg}(\hf, \hg)\right) \] is a function of $\hk$ alone, and this ratio does not vanish at any integer $k \ge -1$ where $g_k$ is classical and cuspidal; it is equal to the fudge-factor $c_k$ defined above using the results of \ref{subsec:periods}.

   Moreover, since $L_p^{\hg}(\hf, \hg)$ is well-defined and non-zero along $\{ r \} \times V_2 \times \cW$, we conclude that $c_{\hf}(\hk)$ does not have a pole at $\hk$ (although it might have a zero there).
  \end{proof}

 \subsection{Meromorphic Eichler--Shimura}

  \begin{theorem}
   There exists an integer $n \ge 0$, and a unique isomorphism of $\cO(V_1)$-modules
   \[ \omega_{\hf}: \Dcris\left(\cF^+ D(\hf)^*(-1-\hk) \right) \cong X^{-n} \cO(V_1), \]
   whose specialisation at every $k \ge 1 \in V_1$ with $k \ne r$ is the linear functional given by pairing with the differential form $\omega_{f_k}$ assocated to the weight $k+2$ specialisation of $\hf$. For this $\omega_{\hf}$, we have
   \[ \left\langle \mathcal{L}^{\mathrm{PR}}_{\cF^{+-}}({}_d \kappa(\hf, \hg)), \omega_{\hf}^+ \otimes \eta_{\hg}\right\rangle = C_d(\hf,\hg, \hj) \cdot L_p^{\hg}(\hf, \hg).\]
  \end{theorem}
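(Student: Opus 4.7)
The plan is to obtain $\omega_{\hf}$ by absorbing the meromorphic fudge factor $c_{\hf}(\hk)$ from the preceding theorem into the definition of the differential. By that theorem, $c_{\hf}$ is a meromorphic function of the weight variable on $V_1$, regular at $k = r$ and regular and non-vanishing at every integer weight $k \ge -1$ with $k \ne r$. First I would shrink $V_1$ around $r$ using Zariski-density of integer weights so that $c_{\hf}$ is everywhere regular and vanishes only (possibly) at $r$. Write $c_{\hf} = X^n u$ for $n \ge 0$ and $u \in \cO(V_1)^\times$; the integer $n$ so obtained will be the one appearing in the statement.

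Next I would define $\omega_{\hf} := c_{\hf}(\hk)^{-1} \cdot \mathbf{b}_{\hf}^+$. Since $c_{\hf}^{-1} \in X^{-n}\cO(V_1)^{\times}$, and $\mathbf{b}_{\hf}^+$ is a trivialisation of $\Dcris\left(\cF^+ D(\hf)^*(-1-\hk)\right)$ over $\cO(V_1)$, this gives an isomorphism onto $X^{-n}\cO(V_1)$. The interpolation property is then immediate: by the remark at the end of the P-adic Hodge theory section, $\mathbf{b}_{\hf}^+$ specialises at each integer $k \ne r$ to $c_{f_k} \omega_{f_k}$, and the proof of the preceding theorem identifies the scalar $c_{f_k}$ with the specialisation of $c_{\hf}(\hk)$ (both constants measure the discrepancy between $\mathbf{b}_{\hf}^+$ and the canonical $\omega$-class at the same fibre). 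Hence $\omega_{\hf}$ specialises to the canonical $\omega_{f_k}$ at every such $k$.

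The reciprocity formula follows by dividing the identity of the preceding theorem through by $c_{\hf}(\hk)$, yielding
\[
 \left\langle \mathcal{L}^{\mathrm{PR}}_{\cF^{+-}}({}_d \kappa(\hf, \hg)), \omega_{\hf} \otimes \eta_{\hg}\right\rangle \;=\; c_{\hf}(\hk)^{-1} \operatorname{Col}_{\mathbf{b}_{\hf}^+ \otimes \eta_{\hg}}({}_d \kappa(\hf, \hg)) \;=\; C_d(\hf, \hg, \hj) \cdot L_p^{\hg}(\hf, \hg).
\]
Uniqueness reduces to Zariski-density: any two candidates $\omega_{\hf}, \omega_{\hf}'$ differ by an element of $\operatorname{Frac} \cO(V_1)$ which equals $1$ at every integer weight $k \ne r$, hence equals $1$ identically. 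The only substantive point is the finiteness of $n$, but this is exactly the content of the regularity of $c_{\hf}$ at $X=0$ combined with $\cO(V_1)$ being a discrete valuation ring there; beyond this the argument is essentially a rescaling and no serious new obstacle arises. The main conceptual content of the statement is therefore not in its proof, but in the observation that the weight-wise correction factor $c_{\hf}$, which in principle could be a transcendental function of $\hk$, can be packaged into a \emph{geometric} object $\omega_{\hf}$ that interpolates the classical $\omega_{f_k}$ away from the Eisenstein weight.
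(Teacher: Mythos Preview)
Your proposal is correct and follows essentially the same approach as the paper: the paper's proof is the one-line ``define $\omega_{\hf} = b_{\hf}^+ / c_{\hf}$, with $n$ the order of vanishing of $c_{\hf}$ at $k = r$'', and you have simply spelled out the details (shrinking $V_1$, the interpolation check, and the uniqueness argument) that the paper leaves implicit.
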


  \begin{proof}
   We simply define $\omega_{\hf}$ to be the quotient $b_{\hf}^+ / c_{\hf}$, and $n$ the order of vanishing of $c_{\hf}$ at $k = r$.
  \end{proof}

  \begin{remark}
   Note that we used the family $\hg$ in the construction of $\omega_{\hf}$; but the interpolating property relating it to the classical Eichler--Shimura isomorphism implies that it is uniquely determined by $\hf$ alone.
  \end{remark}

 \subsection{Leading terms when $c_\hf(r) = 0$}

 If $c_{\hf}(r) \ne 0$, then we have thus constructed a class in Iwasawa cohomology of $V(\hg \times \psi)^*$ whose regulator agrees with the product of Kato's Euler system for $\hg \times \psi$, and a shifted copy of the $p$-adic $L$-function for $\hg \times \tau$.

 We claim that if $c_{\hf}(r) = 0$, then in fact ${}_d \kappa(\hg, \hh)$ is divisible by $X$. If $c_{\hf}(r) = 0$, then ${}_d \kappa(E_{\ell+2}^{\crit},\hg)$ is in the Selmer group with local condition $\cF^+V(\hg)^*$. This Selmer group is 0 by \cref{prop:Selvanish}. So ${}_d \kappa(\hg, \hh) \bmod X$ would have to land in the cohomology of $V(\hg)^*_{\mathrm{sub}}$ instead; but then we are seeing the projection into $\cF^-$, not $\cF^+$, so by Kato's results again (for $\hg \times \tau$, instead of $\hg \times \psi$, this time) this is zero as well.

 So we can divide out a factor of $X$ from both ${}_d \kappa(\hf, \hg)$ and $c_{\hf}(\hk)$, and repeat the argument. Since $c_{\hf}$ is not identically 0, this must terminate after finitely many steps. Thus we have shown the following:

 \begin{proposition}
  Let $n \ge 0$ be the order of vanishing of $c_{\hf}$ at $k = r$. Then $X^{-n} {}_d \hat \kappa(\hf, \hg)$ is well-defined and non-zero modulo $X$; and this leading term projects non-trivially into the quotient $ H^1(\QQ, V(\hg)^*(\psi^{-1}) \otimes \cH_{\Gamma}(-\hj))$. Its image under the Perrin-Riou regulator is given by
  \[ c^{*}_{\hf}(r) C_d(f_\beta, \hg, \hj) \cdot \log^{[r+1]} \cdot L_p^{\hg}(E_{r + 2}(\psi, \tau), \hg), \]
  where $c^{*}_{\hf}(r) \in L^\times$.
 \end{proposition}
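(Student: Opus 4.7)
The proof is essentially the induction sketched in the two paragraphs preceding the Proposition. I would formalise it as follows: setting $\kappa_m := X^{-m}\, {}_d\kappa(\hf,\hg)$, I claim that $\kappa_m \in H^1(\QQ, V(\hf,\hg)^*)$ is well-defined for every $m\le n$, and is divisible by $X$ whenever $m < n$. The base case $m=0$ is the Corollary just above, and the $X$-torsion-freeness of $H^1(\QQ, V(\hf,\hg)^*)$ reduces the inductive step to showing that $\bar\kappa_m := \kappa_m \bmod X$ vanishes whenever $m < n$.

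Dividing the reciprocity identity by $X^m$ gives
\[
  \operatorname{Col}_{b_\hf^+ \otimes \eta_\hg}(\kappa_m) = c_\hf^{[m]}(\hk)\cdot C_d(\hf,\hg,\hj)\cdot L_p^\hg(\hf,\hg), \qquad c_\hf^{[m]} := X^{-m} c_\hf,
\]
whose right-hand side vanishes modulo $X$ precisely when $m<n$. I then analyse $\bar\kappa_m \in H^1(\QQ, V(f_\beta)^*\otimes V(\hg)^* \otimes \cH_\Gamma(-\hj))$ using the short exact sequence $0 \to \Qp(\tau^{-1})(1+r) \to V(f_\beta)^* \to \Qp(\psi^{-1}) \to 0$. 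For the image in the $\Qp(\psi^{-1})$-quotient, the vanishing of the above Coleman map (after accounting for the $t^{r+1}$-shift in $b_\hf^+$) forces the corresponding local $\cF^-V(\hg)^*$-projection to be zero, placing the quotient class in the Greenberg Selmer group killed by Proposition~\ref{prop:Selvanish} (with character $\psi^{-1}$ and twist $0$). For the image in the $\Qp(\tau^{-1})(1+r)$-subrepresentation, the vanishing of $\loc_p({}_d\kappa(\hf,\hg))$ in $\cF^{--}D^c(\hf,\hg)^*$-cohomology makes the analogous $\cF^-V(\hg)^*$-projection vanish, and a second application of Proposition~\ref{prop:Selvanish} (with character $\tau^{-1}$ and twist $1+r$) kills it. Hence $\bar\kappa_m = 0$, completing the induction.

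At $m=n$, the sub-argument still applies, so $\bar\kappa_n$ lies entirely in the $\Qp(\psi^{-1})$-quotient. The verbatim argument of Proposition~\ref{div-log}---using only that classical Beilinson--Flach classes lift to $Y_1(N)\times Y_1(N)$ and thus vanish against the Eisenstein eigenspaces at locally-algebraic characters of degree $0,\dots,r$---shows that this projection is divisible by $\log^{[r+1]}$, yielding the ``non-trivial projection'' statement (non-triviality comes from the fact that the reciprocity identity for $\kappa_n$ has non-zero leading coefficient $c_\hf^*(r) \neq 0$ modulo $X$). The regulator formula then follows by reducing that identity modulo $X$: on the right one reads off $c_\hf^*(r)\cdot C_d(f_\beta,\hg,\hj)\cdot L_p^\hg(f_\beta,\hg)$, while on the left the identity $b_\hf^+ \bmod X = c_{f_\beta}\, t^{r+1}\, \eta_{f_\beta}\otimes e_{-(r+1)}$, together with the fact that multiplication by $t^{r+1}$ on $(\varphi,\Gamma)$-modules corresponds to multiplication by $\log^{[r+1]}$ on $\cH_\Gamma$, produces the displayed $\log^{[r+1]}$ factor. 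Non-vanishing of $L_p^\hg(f_\beta,\hg)$ (needed to see the non-triviality) follows from the Artin factorisation into $L_p(\hg\times\psi)\cdot L_p(\hg\times\tau,\hj-1-r)/L_p(\operatorname{Ad}\hg)$, with each numerator factor a non-zero Kato $p$-adic $L$-function.

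The only real delicacy is bookkeeping the $t^{r+1}$ shifts in the cross-diagram for $D(f_\beta)^*$---making sure that (i) the Perrin-Riou regulator attached to $b_\hf^+$ really does control the $\Qp(\psi^{-1})$-quotient piece rather than some off-by-$t^{r+1}$ subobject, and (ii) the reduction of $b_\hf^+ \bmod X$ inserts exactly one factor of $\log^{[r+1]}$ on the $\cH_\Gamma$ side. Once these shifts are correctly tracked, the rest of the argument is a routine assembly of the earlier lemmas.
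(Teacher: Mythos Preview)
Your proof is correct and follows the same inductive strategy the paper sketches in the two paragraphs preceding the proposition: kill the $\psi^{-1}$-quotient via the vanishing of the Coleman map together with Proposition~\ref{prop:Selvanish}, then kill the $\tau^{-1}(1+r)$-sub via the $\cF^{--}$-vanishing and a second appeal to Proposition~\ref{prop:Selvanish}, and iterate.

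One caveat: your claim that the verbatim argument of Proposition~\ref{div-log} gives $\log^{[r+1]}$-divisibility of $\bar\kappa_n$ does not work when $n>0$. That argument uses the vanishing of the specialisations of ${}_d\kappa(\hf,\hg)$ itself at $(r,\ell,\chi)$, whereas $\bar\kappa_n$ records the $n$-th $X$-derivative at $X=0$, for which no such geometric interpretation is available. The paper explicitly flags this (``for $n>0$ this is less obvious'') and defers it to the proof of the subsequent theorem. However, this divisibility is not part of the present proposition---the $\log^{[r+1]}$ in the displayed regulator formula comes purely from the identity $b_{\hf}^+ \bmod X = c_{f_\beta}\, t^{r+1}\,\eta_{f_\beta}\otimes e_{-(r+1)}$, which you handle correctly---so your proof of the proposition itself is sound.
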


 We denote the resulting class by ${}_d \hat\kappa^*(f_\beta, \hg)$, and define ${}_d \kappa^*(f_\beta, \hg)$ in the same way using ${}_d \kappa(f_\beta, \hg)$ instead.. If $n = 0$, we have seen above that this class is divisible by $\log^{[r+1]}$; for $n > 0$ this is less obvious, but it follows from the proof of the next theorem:


\begin{theorem}
We have
\[ {}_d \hat\kappa^*(f_\beta, \hg) = \frac{\left(C \cdot C_d(f_\beta, \hg, \hj) \log^{[r+1]} \cdot L_p(\hg \otimes \tau, \hj - 1 - r)\right)}{L_p(\Ad \hg)} \cdot \kappa(\hg \times \psi)\]
for some nonzero constant $C \in L^\times$.
\end{theorem}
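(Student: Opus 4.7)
The plan is to match both sides of the claimed identity after applying the Coleman map
\[
 \operatorname{Col}_{\eta_{\hg}} \colon H^1(\QQ, V(\hg)^*(\psi^{-1}) \otimes \cH_{\Gamma}(-\hj)) \longrightarrow \cH_{\Gamma}
\]
obtained from localisation at $p$, projection to the ordinary unramified quotient $\cF^- V(\hg)^*$, and the Perrin-Riou regulator paired against $\eta_{\hg}$, and then to upgrade equality of images to equality of classes via the Selmer-vanishing result \cref{prop:Selvanish}.

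For the left-hand side, I would first apply the preceding proposition to obtain
\[
 \operatorname{Col}_{\eta_{\hg}}\bigl({}_d \hat\kappa^*(f_\beta, \hg)\bigr) = c^*_{\hf}(r) \cdot C_d(f_\beta, \hg, \hj) \cdot \log^{[r+1]} \cdot L_p^{\hg}(E_{r+2}(\psi,\tau), \hg),
\]
and then invoke the Artin-formalism factorisation
\[
 L_p^{\hg}(E_{r+2}(\psi, \tau), \hg)(\hj) = \frac{L_p(\hg \times \psi, \hj) \cdot L_p(\hg \times \tau, \hj - 1 - r)}{L_p(\Ad \hg)}
\]
recorded earlier in this section; the Coleman image of the left-hand side is then the product of $L_p(\hg \times \psi, \hj)$ with precisely the prefactor appearing in the statement, provided we take $C = c^*_{\hf}(r)$. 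For the right-hand side, Kato's reciprocity law (\cref{es4}) gives $\operatorname{Col}_{\eta_{\hg}}(\kappa(\hg \times \psi)) = L_p(\hg \times \psi, \hj)$, so multiplication by the same prefactor yields exactly the same expression. Thus both sides have identical Coleman images.

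To conclude, I would observe that the difference of the two sides lies in the kernel of $\operatorname{Col}_{\eta_{\hg}}$. Since $\cF^- V(\hg)^*$ is unramified of rank one, the Perrin-Riou regulator is injective on its local Iwasawa cohomology, so this kernel is precisely the Greenberg Selmer group of $V(\hg)^*(\psi^{-1}) \otimes \cH_{\Gamma}(-\hj)$, which vanishes by \cref{prop:Selvanish}. The hard part will be the careful bookkeeping of periods in the Artin-formalism factorisation: one must check that the $L_p(\Ad \hg)$ denominator on the right genuinely absorbs the period normalisation implicit in Kato's $p$-adic $L$-function, so that the scalar prefactor lies in $\cH_{\Gamma} \hat\otimes \cO(V_2)$ and the asserted expression defines an element of the torsion-free Iwasawa cohomology rather than merely of its field of fractions. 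Since the left-hand side is a priori an integral Iwasawa cohomology class, this integrality is automatic from the identity, and forces $C$ to be a nonzero element of $L^\times$.
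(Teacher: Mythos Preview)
Your proposal is correct and follows essentially the same route as the paper's proof: compute the Coleman image of each side via the preceding proposition, the Artin-formalism factorisation, and Kato's reciprocity law, set $C = c^*_{\hf}(r)$, and then invoke \cref{prop:Selvanish} to conclude that equality of regulator images forces equality of classes. Your additional remarks on the injectivity of the Perrin-Riou regulator on the unramified rank-one quotient make explicit a step the paper leaves implicit, and the integrality concern in your final paragraph is already handled by the fact that the left-hand side is an honest Iwasawa class and $c^*_{\hf}(r) \in L^\times$ by the preceding proposition.
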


\begin{proof}
Taking $C = c^{*}_{\hf}(r)$, it follows from \cref{eq:Artin} and the previous proposition (combined with Kato's reciprocity law for $\hg$) that both of the cohomology classes we are considering have the same image under the regulator; so they are equal as cohomology classes, by \cref{prop:Selvanish}.
 \end{proof}

\begin{remark}
In terms of the class ${}_d \kappa^*(f_\beta, \hg)$, which already includes the logarithmic factor, the result takes the easier form of \[ {}_d \kappa^*(f_\beta, \hg) = \frac{\left(C \cdot C_d(f_\beta, \hg, \hj) \cdot L_p(\hg \otimes \tau, \hj - 1 - r)\right)}{L_p(\Ad \hg)} \cdot \kappa(\hg \times \psi). \]
\end{remark}

\section{Deformation of diagonal cycles}\label{sect:diag-def}

\subsection{Setup}

In this section we consider the diagonal cycles of \cite{BSV1} attached to three modular forms, or more generally to three Coleman families.

Let $f = E_{r+2}(\psi,\tau)$ stand for the Eisenstein series of weight $r+2$ and characters $(\psi,\tau)$, with $\psi \tau = \chi_f$. As before, we take its critical-slope Eisenstein $p$-stabilisation, that we denote as $E_{r+2}^{\crit}(\psi,\tau)$ or just $E_{k+2}^{\crit}$, if the choice of characters is clear from the context. Let $(g,h)$ be two modular forms of weights $(\ell+2,m+2)$, levels $(N_g,N_h)$, and nebentypes $(\chi_g,\chi_h)$. We make the self-duality assumption $\chi_f \chi_g \chi_h = 1$, and to simplify notations, suppose that $\ell \geq m$. We further fix $p$-stabilisations of $g$ and $h$, that we denote as $g_{\alpha}$ and $h_{\alpha}$, respectively. Under the non-criticality conditions already discussed, we may fix a triple of Coleman families $(\hf,\hg,\hh)$ passing through $(E_{r+2}^{\crit},g_{\alpha},h_{\alpha})$ over a triple of affinoid discs $(V_1,V_2,V_3)$. For simplicity, we may assume that both $\hg$ and $\hh$ are ordinary families, and as before, that for all integers $k \in V_1 \cap \ZZ_{\ge 0}$ with $k \ne r$, the specialisation $f_k$ is a non-critical slope cusp form.

As in \cref{es6} above, we choose a value of $\tfrac{\hk_1 + \hk_2 + \hk_3}{2}$ as a family of characters over $V_1\times V_2\times V_3$, and we say a triple of integer weights $(k, \ell, m)$ is an ``integer point'' if it is compatible with this choice of square roots.

We want to consider the diagonal class attached by the works of Darmon--Rotger \cite{DR3} and Bertolini--Seveso--Venerucci \cite{BSV1} to the triple $(\hf,\hg,\hh)$, that we denote by $\kappa(\hf,\hg,\hh)$. Recall the module $V(\hf,\hg,\hh)^* \coloneqq V(\hf)^* \hat \otimes_{\Qp} V(\hg)^* \hat \otimes_{\Qp} V(\hh)^* \otimes \mathcal H_{\Gamma}(-1-\tfrac{\hk_1 + \hk_2 + \hk_3}{2})$, defined in \cref{es6}. We define similarly a space $V^c(\hf,\hg,\hh)^*$ using $V^c(\hf)^*$ instead of $V(\hf)^*$.


\subsection{Selmer vanishing}

With the previous notations, consider the family of representations over $\cO(V_2 \times V_3)$ given by
\[
 V(\hg,\hh)_0^* := \left(V(\hg)^* \hat\otimes_{\QQ_p} V(\hh)^*\right)(-1-\tfrac{r + \hk_2 + \hk_3}{2}).
\]
Here $\tfrac{r + \hk_2 + \hk_3}{2}$ is understood as a character of $\Zp^\times$ via our choice above specialised at $\hk_1 = r$. This has a rank 1 submodule
\[ \cF^{++} V(\hg,\hh)_0^* = \left(\cF^+ V(\hg)^* \hat \otimes_{\QQ_p} \cF^+ V(\hh)^*\right) (-1-\tfrac{r + \hk_2 + \hk_3}{2}). \]

Let $n$ be an integer number, playing the role of a Tate twist (later we will take $n$ to be either $0$ or $1 +r$). We consider the two-variable $p$-adic $L$-functions $L_p^{\hg}(\hg,\hh)$ and $L_p^{\hh}(\hg,\hh)$ restricted to $s=2+\tfrac{r + \hk_2 + \hk_3}{2}-n$, which are analytic functions on $V_2 \times V_3$. We denote these functions as $L_p^{\hg}(\hg,\hh)|_n$ and $L_p^{\hh}(\hg,\hh)|_n$, respectively.

\begin{lemma}
Let $n \neq \frac{r+1}{2}$. Then the $p$-adic $L$-functions $L_p^{\hg}(\hg,\hh)|_n$ and $L_p^{\hh}(\hg,\hh)|_n$ are non-zero.
\end{lemma}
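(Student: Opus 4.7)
The plan is to exhibit, for each of the two restricted $p$-adic $L$-functions, a classical specialisation at which the interpolation formula forces a non-zero value; since both restrictions are rigid analytic on $V_2\times V_3$ and classical integer points are Zariski dense there, any single such point suffices. By the manifest symmetry in interchanging $\hg$ and $\hh$, I treat only $L_p^{\hg}(\hg,\hh)|_n$.

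Translating the constraint $s = 2 + \tfrac{r + \hk_2 + \hk_3}{2} - n$ into the cyclotomic variable as $j = 1 + \tfrac{r + \hk_2 + \hk_3}{2} - n$, the interpolation property of the $\hg$-dominant $p$-adic $L$-function identifies, at any classical integer $(\ell, m)\in V_2\times V_3$ with $m + 1 \le j \le \ell$, the value $L_p^{\hg}(\hg,\hh)|_n(\ell, m)$ with a non-zero period and Euler-factor multiple of the complex $L$-value $L(g_\ell\otimes h_m,\, 1+j)$. A short computation shows that the offset of $1+j$ from the centre $(\ell+m+3)/2$ of the Rankin--Selberg functional equation is exactly $(r+1)/2 - n$, which is non-zero by hypothesis. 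In particular, the excluded slice $n = (r+1)/2$ is precisely the locus of central critical values, which can vanish uniformly from the sign of the functional equation; the hypothesis in the lemma is exactly what keeps us away from this obstruction.

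It remains to exhibit a classical pair $(\ell, m)$ in the interpolation range at which the non-central critical $L$-value does not vanish. The most convenient choice is the ``right-edge'' configuration $1 + j = \ell + 1$, which amounts to $\ell = m + r + 2 - 2n$; this can be arranged by choosing $m\in V_3\cap\ZZ$ non-negative and taking $\ell \in V_2\cap\ZZ$ equal to the prescribed value, possibly after enlarging the affinoid disc $V_2$, which does not affect the statement. Non-vanishing of the right-edge critical value $L(g_\ell\otimes h_m,\, \ell + 1)$ can then be established either by Shimura's integral representation (relating it, up to non-zero constants, to a Petersson pairing against an Eisenstein series, which is generically non-vanishing) or by further specialising $\hh$ to a CM weight where $h_m = \theta_\lambda$, whereupon the Rankin--Selberg $L$-function factors as a product of twisted $\GL_2$ $L$-values to which Rohrlich's non-vanishing theorem applies.

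The main obstacle is this final analytic step: non-vanishing of a specific non-central critical Rankin--Selberg $L$-value is generally expected but not automatic, and the precise argument depends on features of the families $\hg, \hh$. The two-dimensional freedom in $(\ell, m)$, combined with the flexibility to tailor the boundary condition within the critical strip, makes the non-vanishing accessible by invoking classical analytic results such as those of Rohrlich or Shimura, though the case-specific details of this reduction are where most of the work lies.
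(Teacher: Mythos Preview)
You correctly compute that every interpolated complex $L$-value on the slice sits at the fixed offset $t = (r+1)/2 - n$ from the centre of the functional equation, independently of $(\ell, m)$, and you correctly identify the hypothesis $n \ne (r+1)/2$ as precisely the condition $t \ne 0$. But this observation already finishes the argument, and your subsequent detour through a ``right-edge'' configuration, Shimura's integral representation, or CM specialisation plus Rohrlich is both unnecessary and --- as you yourself concede --- incomplete.

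The missing step is elementary. Since $n$ and $r$ are integers, $t$ is a nonzero element of $\tfrac{1}{2}\ZZ$, so after using the functional equation to reduce to $t>0$ one has either $t \ge 1$ or $t = \tfrac{1}{2}$. If $t \ge 1$ then $1+j$ lies strictly beyond the abscissa of absolute convergence of the Rankin--Selberg Euler product, so $L(g_\ell \otimes h_m, 1+j) \ne 0$ at \emph{every} classical $(\ell, m)$ in the interpolation range --- no special choice is needed. If $t = \tfrac{1}{2}$ then $1+j$ lies exactly on the abscissa of convergence, and nonvanishing there is Shahidi's theorem on $\GL_n$ $L$-functions at $\Re(s)=1$. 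This is the paper's argument in full.

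Your right-edge manoeuvre $j = \ell$ cannot help: since the offset from centre equals $t$ for \emph{all} $(\ell, m)$ on the slice, no choice of weights moves the $L$-value into a more tractable region of the critical strip. The accompanying suggestions to enlarge $V_2$ (the Coleman family need not extend) or to assume $\hh$ is CM (not among the hypotheses) are therefore both unnecessary and problematic.
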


\begin{proof}
These are two-variable $p$-adic $L$-functions depending on the two-weight variable, and interpolating the cyclotomic twist corresponding to a translation of $t=\frac{r+1}{2}-n$ of the central value, which is $\frac{\ell+m+3}{2}$. If $t \geq 1$, the non-vanishing follows from the convergence of the Euler product. The case $t=\frac{1}{2}$ follows from results of Shahidi \cite[Theorem 5.2]{shahidi81} on non-vanishing of $L$-functions for $\text{GL}_n$ on the abscissa of convergence.
\end{proof}

\begin{proposition}\label{sel-van-diag}
For any integer $n \neq \frac{r+1}{2}$ and prime-to-$p$ Dirichlet character $\chi$, the ``Greenberg Selmer group''
\[ H^1_{++}(\QQ, V(\hg,\hh)_0^*(\chi)(n)) \coloneqq \operatorname{ker}\Bigg(H^1(\QQ, V(\hg,\hh)_0^*(\chi)(n)) \to \frac{H^1(\Qp, V(\hg,\hh)_0^*(\chi)(n))}{H^1(\Qp, \cF^{++}(V(\hg,\hh)_0^*(\chi)(n)))}\Bigg) \] vanishes.
\end{proposition}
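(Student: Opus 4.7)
The plan is to mimic the proof of \cref{prop:Selvanish}, with the Beilinson--Flach Euler system for $(\hg, \hh)$ playing the role that Kato's Euler system played there. The two essential ingredients are (i) the reciprocity law relating ${}_d\kappa(\hg, \hh)$ to the $p$-adic $L$-functions $L_p^{\hg}(\hg,\hh)$ and $L_p^{\hh}(\hg,\hh)$, and (ii) the non-vanishing lemma immediately preceding the proposition.

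First I would check that $H^1(\QQ, V(\hg, \hh)_0^*(\chi)(n))$ is $\cO(V_2 \times V_3)$-torsion-free: this follows from the cohomology exact sequence associated to multiplication by a uniformiser, combined with vanishing of $H^0(\QQ, V(\hg, \hh)_0^*(\chi)(n))$ at every classical specialisation $(\ell, m)$ of $(\hg,\hh)$ (since $V(g_\ell)^* \otimes V(h_m)^*$ admits no Galois-invariant line, its irreducible constituents being two-dimensional and pairwise non-isomorphic for generic $(\ell,m)$). Consequently it suffices to show that $H^1_{++}$ vanishes after specialising at a Zariski-dense set of integer points $(\ell, m) \in V_2 \times V_3$.

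At such a specialisation, consider the Beilinson--Flach element ${}_d\kappa(\hg, \hh)$ specialised at the cyclotomic character $\hj \mapsto 1 + \tfrac{r + \ell + m}{2} - n$ and twisted by $\chi$. This yields a global class $z \in H^1(\QQ, V(\hg, \hh)_0^*(\chi)(n))$ whose localisation at $p$ projects trivially into $\cF^{--}$, as recalled in \cref{reclaw-bf}. Applying the explicit reciprocity law \eqref{eq:BFrecip} in both the $\hg$-dominant and $\hh$-dominant directions identifies the images of $\loc_p(z)$ in $H^1(\Qp, \cF^{-+})$ and $H^1(\Qp, \cF^{+-})$ (after pairing with the appropriate differentials) with non-zero multiples of the values $L_p^{\hg}(\hg,\hh)|_n(\ell, m)$ and $L_p^{\hh}(\hg,\hh)|_n(\ell, m)$ respectively, both of which are non-zero on a Zariski-dense set by the preceding lemma under the hypothesis $n \neq (r+1)/2$.

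The Euler system machinery of \cite{loefflerzerbes16} converts this non-vanishing into a Selmer bound: the Kolyvagin derivatives of the Beilinson--Flach system annihilate $H^1_{++}$ by (in particular) the $p$-adic $L$-values above, so $H^1_{++}$ is $\cO(V_2 \times V_3)$-torsion and hence zero by the torsion-freeness established above. The main difficulty will be extracting vanishing of the ``strict'' Selmer group $H^1_{++}$ from Euler system bounds naturally available for the looser Selmer groups with $\cF^{+\circ}$ or $\cF^{\circ +}$ local conditions; this is most cleanly done by combining the $\hg$- and $\hh$-dominant reciprocity laws via global Poitou--Tate duality to cut the local condition down to $\cF^{++}$.
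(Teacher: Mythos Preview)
Your proposal is on the right track and would likely go through, but you are making the argument harder than it needs to be, and the paper takes a noticeably cleaner route.

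The key simplification you are missing is the trivial inclusion of Selmer groups. The paper introduces the coarser ``codimension~2'' Selmer group
\[
 H^1_{\circ +}(\QQ, V(\hg,\hh)_0^*(\chi)(n))
 \;=\;\operatorname{ker}\Bigg(H^1(\QQ, \dots) \to \frac{H^1(\Qp, \dots)}{H^1(\Qp, \cF^{\circ +}(\dots))}\Bigg),
\]
and observes that $H^1_{++} \subset H^1_{\circ +}$ simply because $\cF^{++} \subset \cF^{\circ +}$. Thus it suffices to show that the \emph{larger} group $H^1_{\circ +}$ vanishes at a Zariski-dense set of points, and this follows from the Beilinson--Flach Euler system using only the single $\hg$-dominant reciprocity law and the non-vanishing of $L_p^{\hg}(\hg,\hh)|_n$. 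One then concludes by torsion-freeness exactly as you outline.

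So your worry in the last paragraph --- about ``extracting vanishing of the strict Selmer group $H^1_{++}$ from Euler system bounds naturally available for the looser Selmer groups'' --- is misplaced: the implication goes in the easy direction, and no combination of the two reciprocity laws or Poitou--Tate duality is needed. Your proposed use of \emph{both} the $\hg$- and $\hh$-dominant reciprocity laws would presumably also work, but it buys nothing here; the paper's argument uses only one of the two $p$-adic $L$-functions from the preceding lemma.
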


\begin{proof}
We can arrange $\chi=1$ without loss of generality. We shall compare the Greenberg Selmer group above (defined by a ``codimension 3'' local condition) with the Selmer group defined by a less restrictive ``codimension 2'' local condition,
\[ H^1_{\circ +}(\QQ, V(\hg,\hh)_0^*(\chi)(n)) \coloneqq \operatorname{ker}\Bigg(H^1(\QQ, V(\hg,\hh)_0^*(\chi)(n)) \to \frac{H^1(\Qp, V(\hg,\hh)_0^*(\chi)(n))}{H^1(\Qp, \cF^{\circ +}(V(\hg,\hh)_0^*(\chi)(n)))}\Bigg), \]
where
\[ \cF^{\circ +} V(\hg,\hh)_0^* =\left(V(\hg)^* \hat \otimes_{\QQ_p} \cF^+ V(\hh)^*\right)(-1-\tfrac{r+\hk_2 + \hk_3}{2}). \]
If $(x, y)$ is any point (not necessarily classical) of $V_1 \times V_2$ at which $L_p^{\hg}(\hg,\hh)|_n$ does not vanish, then the theory of Beilinson--Flach elements shows that $H^1_{\circ +}(\QQ, V(g_x, h_y)^*(n))$ is zero, and hence \emph{a fortiori} so is $H^1_{++}(\QQ, V(g_x, h_y)^*(n))$. Hence any element of $H^1_{++}(\QQ, V(\hg,\hh)_0^*(n))$ must specialise to 0 at a Zariski-dense set of points of $V_2 \times V_3$. On the other hand, this module is contained in the full $H^1$, which is $\cO(V_2 \times V_3)$-torsion-free, by a similar argument as in the previous section. So $H^1_{++}$ is the zero module.
\end{proof}
%
%

\subsection{Families over punctured discs}

As before, we have a freeness result.
\begin{proposition}
The cohomology $H^1(\QQ, V(\hf,\hg,\hh)^*)$ is a finitely-generated module over $\mathcal O(V_1 \times V_2 \times V_3)$, and this module is $X$-torsion free, where $X \in \mathcal O(V_1)$ is a uniformizer at $r$.
\end{proposition}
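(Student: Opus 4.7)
The plan is to follow the template of the analogous Beilinson--Flach proposition proved earlier in the paper. Finite generation is routine: $V(\hf,\hg,\hh)^*$ is free of finite rank over the Noetherian affinoid algebra $\cO(V_1 \times V_2 \times V_3)$, and its Galois action factors through $G_{\QQ,S}$ for a finite set $S$ of primes; so the standard continuous-cohomology finiteness results yield that $H^1(\QQ, V(\hf,\hg,\hh)^*)$ is finitely generated over $\cO(V_1 \times V_2 \times V_3)$. I would dispose of this first as setup.

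For the $X$-torsion-freeness, I would apply the long exact sequence of cohomology to the short exact sequence
\[
 0 \longrightarrow V(\hf,\hg,\hh)^* \xrightarrow{\,X\,} V(\hf,\hg,\hh)^* \longrightarrow V(\hf,\hg,\hh)^*/X \longrightarrow 0,
\]
which exhibits the $X$-torsion in $H^1(\QQ, V(\hf,\hg,\hh)^*)$ as a quotient of $H^0(\QQ, V(\hf,\hg,\hh)^*/X)$. Identifying the latter module with $V(f_\beta)^* \otimes_L V(\hg,\hh)_0^*$ (using the two-variable module introduced earlier in the section), the problem reduces to proving that this $H^0$ vanishes.

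Using the short exact sequence~\eqref{ses-coh2} to filter $V(f_\beta)^*$ by $\Qp(\tau^{-1})(1+r)$ and $\Qp(\psi^{-1})$, I would further reduce to checking that $H^0(\QQ, \Qp(\chi)(n) \otimes V(\hg,\hh)_0^*)$ vanishes for $(\chi, n) \in \{(\psi^{-1},0),(\tau^{-1},1+r)\}$. Each such $H^0$ is a torsion-free sub-$\cO(V_2 \times V_3)$-module of a finite free module, so a nonzero element would specialize to a nonzero $G_\QQ$-invariant at an open dense subset of $V_2 \times V_3$. At a sufficiently generic classical cuspidal specialization $(y, z)$ in this open set, the tensor product $V(g_y)^* \otimes V(h_z)^*$ is an absolutely irreducible $4$-dimensional representation (since $h_z$ is not a character twist of $g_y$), and so admits no one-dimensional sub-representation of the required form; this contradicts the existence of a nonzero invariant and concludes the argument.

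The one mild obstacle is to guarantee that the classical cuspidal $(y,z)$ at which the tensor product is irreducible are Zariski-dense in the open locus where the hypothetical invariant would be nonzero. This is automatic once one assumes (as is already implicit in the diagonal-cycle setup) that $\hg$ and $\hh$ are not related by a character twist; the remainder of the argument is then just the bookkeeping parallel to the Beilinson--Flach case.
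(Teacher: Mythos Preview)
Your proposal is correct and follows essentially the same route as the paper: reduce $X$-torsion-freeness via the multiplication-by-$X$ exact sequence to the vanishing of $H^0(\QQ, V(\hf,\hg,\hh)^*/X)$, and then rule out global invariants by specialising $(\hg,\hh)$ at Zariski-dense classical points where the tensor product is irreducible. The paper is terser (it simply says ``specializing the families at different weights'' without writing out your filtration of $V(f_\beta)^*$), and it also records a second, purely local alternative that you do not mention: one can instead show there are no $G_{\Qp}$-invariants by the Hodge--Tate weight analysis of \cite[Lemma 8.2.6]{KLZ17}.
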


\begin{proof}
This follows via the exact sequence of cohomology from the vanishing of $H^0(\QQ,V(\hf,\hg,\hh)^*/X)$, which is a consequence of specializing the families at different weights, thus excluding the option of having any $G_{\QQ}$-invariant.

Alternatively, we may see that there are no $G_{\QQ}$-invariants by establishing the stronger statement that there are no $G_{\QQ_p}$-invariants, via the same analysis of the Hodge--Tate weights as in \cite[Lemma 8.2.6]{KLZ17}.
\end{proof}

\begin{proposition}
There exists a cohomology class
\[ \kappa(\hf,\hg,\hh) \in H^1(\QQ, \frac{1}{X} V^c(\hf,\hg,\hh)^*), \]
whose fibre at any balanced integer point $(k, \ell, m)$ with $k \ne r$ is the diagonal-cycle class of \cref{es6}.
\end{proposition}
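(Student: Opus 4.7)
The plan is to imitate the construction of the Beilinson--Flach class ${}_d\kappa(\hf, \hg)$ from the previous section, now applied to the three-variable diagonal-cycle construction of Bertolini--Seveso--Venerucci \cite{BSV1} (generalising Darmon--Rotger \cite{DR3}).

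Over the punctured disc $(V_1 \setminus \{r\}) \times V_2 \times V_3$, all three families are cuspidal and of non-critical slope, so the construction of \cite[\S 8.1]{BSV1} applies without modification and produces a class in $H^1(\QQ, V(\hf,\hg,\hh)^*)$ whose fibre at every balanced integer point $(k, \ell, m)$ with $k \ne r$ is the Abel--Jacobi image of the corresponding diagonal cycle. The remaining task is to extend this class across $k = r$, and to identify the ambient module in which it lives.

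This construction factors through the overconvergent projector $\Pr_{\hf}^{[\bullet]}$ applied to a universal class in the nearly-overconvergent cohomology of a triple product of modular curves; the analogous projectors on the ordinary factors $\hg, \hh$ cause no difficulty. By the discussion of the section on nearly overconvergent cohomology, the relevant projector for $\hf$ acquires at worst a simple pole at $k = r$, but its residue maps trivially into $V(f_\beta)^*_{\mathrm{quo}}$; hence its image lies in the slightly smaller module $\tfrac{1}{X}V^c(\hf)^*$ rather than $\tfrac{1}{X}V(\hf)^*$. Combining with the unproblematic $\hg, \hh$ factors then yields a class in $H^1(\QQ, \tfrac{1}{X}V^c(\hf,\hg,\hh)^*)$.

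Uniqueness of the extension is automatic from the $X$-torsion-freeness just established, and the interpolation property at balanced integer points with $k \ne r$ follows from the corresponding property on the punctured disc. The principal technical point is the residue analysis of $\Pr_{\hf}^{[\bullet]}$ at the Eisenstein fibre; this is the three-variable analogue of the Beilinson--Flach argument of the previous section, and depends only on the structural properties of the inclusion $V^c(\hf)^* \hookrightarrow V(\hf)^*$ proved in Part A.
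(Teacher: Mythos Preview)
Your proposal is correct and follows essentially the same approach as the paper: both argue that the Bertolini--Seveso--Venerucci construction goes through verbatim away from $k=r$, and that the only obstruction at the Eisenstein fibre comes from the projector/specialisation map on the $\hf$-factor, whose simple pole forces the target module to be $\tfrac{1}{X}V^c(\hf,\hg,\hh)^*$. The paper's own proof is terser---it phrases the source of the denominator as ``a suitable map which allows to move from modules of analytic functions to distributions'' in the BSV setup---whereas you make the role of the nearly-overconvergent projector $\Pr_{\hf}^{[\bullet]}$ and the residue landing in $V^c$ (rather than $V$) more explicit, but the substance is identical.
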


\begin{proof}
As noted in \cref{rmk:notperfect}, the construction of cohomology classes \cite[\S 8]{BSV1} does not quite work in the present setting, because the Ohta pairing $V(\hf) \times V^c(\hf)^* \to \cO(V_1)$ used in equation (82) of \emph{op.cit.} is not perfect. However, we have shown above that the Ohta pairing does induce a perfect duality between $V^c(\hf)$ and $\tfrac{1}{X}V^c(\hf)^*$ (for small enough $V_1$); and substituting this statement for the erroneous claim in \emph{op.cit.} we obtain a cohomology class valued in $\frac{1}{X} V^c(\hf, \hg, \hh)^*$ interpolating the diagonal-cycle classes for classical specialisations.
\end{proof}

\begin{remark}
For any affinoid subdomain $V_1' \subset V_1$ not containing $r$, the construction of \cite{BSV1} does apply over $V_1'$; and the restriction of our class $\kappa(\hf,\hg,\hh)$ to $V_1' \times V_2 \times V_3$ is the diagonal-cycle cohomology class $\kappa\left(\hf |_{V_1'}, \hg, \hh\right)$ of \emph{op.cit.}.
\end{remark}

\subsection{Local properties at $p$}

Consider the rank 4 submodule
\[
 \begin{aligned} \mathcal F^+_{\mathrm{bal}} D^c(\hf,\hg,\hh)^* & = (\mathcal F^+ D^c(\hf)^* \hat \otimes \mathcal F^+ D(\hg)^* \hat \otimes D(\hh)^* + \mathcal F^+ D^c(\hf)^* \hat \otimes D(\hg)^* \hat \otimes \mathcal F^+ D(\hh)^* \\ & +  D^c(\hf)^* \hat \otimes \mathcal F^+ D(\hg)^* \hat \otimes \cF^+ D(\hh)^*)(-1-\tfrac{\hk_1 + \hk_2 + \hk_3}{2}).
 \end{aligned}
\]
We also consider the quotient \[ \mathcal F^-_{\mathrm{bal}} D^c(\hf,\hg,\hh)^* = \frac{D^c(\hf,\hg,\hh)^*}{\mathcal F^+ D^c(\hf,\hg,\hh)^*}. \]
By construction, for weights in the balanced region, the submodule $\cF^+_{\mathrm{bal}}$ satisfies the Panchishkin condition (i.e.~all its Hodge--Tate weights are $\ge 1$, and those of the quotient are $\le 0$).

\begin{proposition}
The image of $\operatorname{loc}_p\left(\kappa(\hf,\hg,\hh)\right)$ in $H^1\left(\Qp, \tfrac{1}{X}\cF^{-}_{\mathrm{bal}} D^c(\hf, \hg, \hh)^*\right)$ is zero.
\end{proposition}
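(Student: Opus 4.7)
The plan is to mirror exactly the argument used for the analogous Beilinson--Flach vanishing statement earlier in the paper: combine torsion-freeness of the local cohomology with the known vanishing on a Zariski-dense set of fibres away from $X = 0$.

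First, I would establish that the target module $H^1\bigl(\Qp, \tfrac{1}{X}\cF^{-}_{\mathrm{bal}} D^c(\hf, \hg, \hh)^*\bigr)$ is $X$-torsion-free, via the usual long exact sequence argument from the short exact sequence
\[ 0 \to \cF^{-}_{\mathrm{bal}} D^c(\hf, \hg, \hh)^* \xrightarrow{X} \cF^{-}_{\mathrm{bal}} D^c(\hf, \hg, \hh)^* \to \cF^{-}_{\mathrm{bal}} D^c(f_\beta, \hg, \hh)^* \to 0. \]
It suffices to see that $H^0$ of the quotient vanishes, which is a Hodge--Tate weight computation paralleling the one sketched in the preceding freeness proposition (and the analogous argument in \cite[Lemma 8.2.6]{KLZ17}): the Hodge--Tate weights appearing in the fibre of $\cF^-_{\mathrm{bal}}$ at $(r, \ell, m)$ are all strictly negative in the balanced region, so no Galois-invariant vector can occur.

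Second, for every integer $k \in V_1 \cap \mathbf{Z}_{\ge -1}$ with $k \ne r$, the specialisation $f_k$ is a cuspidal non-critical slope eigenform, so the triple $(f_k, \hg, \hh)$ is covered by \cite[Corollary 8.2]{BSV1}. That result asserts exactly that the diagonal-cycle class lies in the rank-4 submodule $\bigl(\cF^{++\circ} + \cF^{+\circ+} + \cF^{\circ ++}\bigr) D(\hf|_{V_1'} , \hg, \hh)^*$ for any affinoid $V_1' \subset V_1$ avoiding $r$; equivalently, its projection to $\cF^-_{\mathrm{bal}}$ vanishes. By the remark immediately preceding the proposition, $\kappa(\hf, \hg, \hh)$ restricts on such $V_1'$ to precisely this classical diagonal-cycle class, so its image in $H^1(\Qp, \tfrac{1}{X}\cF^-_{\mathrm{bal}} D^c(\hf, \hg, \hh)^*)$ is zero on the restriction to $V_1' \times V_2 \times V_3$.

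Finally, since the integer weights $k \in V_1 \setminus \{r\}$ are Zariski-dense, the image of $\operatorname{loc}_p \kappa(\hf,\hg,\hh)$ vanishes on a Zariski-dense subset. Combined with the $X$-torsion-freeness from the first step (and the usual torsion-freeness in the $\hg, \hh$ directions, as in \cref{sect:BFdef}), this forces the image to vanish identically.

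The main obstacle I anticipate is the torsion-freeness step: although it is stated as routine in the Beilinson--Flach analogue, here we have a genuinely \emph{triple} tensor product, so one must be a little more careful to verify that no $H^0$ contribution survives at the singular fibre $X = 0$, in particular because one of the three factors is a twist of the Eisenstein representation $V^c(f_\beta)^*$ whose semisimplification involves the characters $\psi^{-1}$ and $\tau^{-1}\varepsilon^{1+r}$. A Hodge--Tate weight count in the spirit of \cite[Lemma 8.2.6]{KLZ17} should suffice, but this is where most of the concrete checking lies.
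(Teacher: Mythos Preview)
Your proposal is correct and follows essentially the same approach as the paper: the paper's proof is the single sentence ``This follows from the fact that the Galois module is torsion free, and the specialisations away from $X = 0$ have the required vanishing property (as they are built from cohomology classes which satisfy the Bloch--Kato local condition),'' which is precisely your torsion-freeness plus Zariski-density argument, with your step (2) unpacking what the paper means by ``the required vanishing property''. One small imprecision: the Hodge--Tate weights of $\cF^-_{\mathrm{bal}}$ are $\le 0$ in the balanced region (not strictly negative, as the paper itself notes just before the proposition), so the $H^0$-vanishing at the singular fibre should instead be argued via genericity over $V_2 \times V_3$ or via the $\varphi$-eigenvalues, but this does not affect the structure of your argument.
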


\begin{proof}
This follows from the fact that the Galois module is torsion free, and the specialisations away from $X = 0$ have the required vanishing property (as they are built from cohomology classes which satisfy the Bloch--Kato local condition).
\end{proof}

\subsection{Specialisation at $X=0$}

\begin{proposition}
The image of $\kappa(\hf,\hg,\hh)$ in the cohomology of the quotient
\[ \frac{\frac{1}{X} V^c(\hf,\hg,\hh)^*}{V(\hf,\hg,\hh)^*} \cong  V(\hg, \hh)^*_0 (\tau^{-1})(1+r) \]
is zero.
\end{proposition}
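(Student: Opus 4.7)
The plan is to mimic the Beilinson--Flach argument from \cref{sect:BFdef}. The preceding proposition gives that $\loc_p(\kappa(\hf,\hg,\hh))$ has zero image in $H^1(\Qp, \tfrac{1}{X}\cF^{-}_{\mathrm{bal}} D^c(\hf,\hg,\hh)^*)$, so (using $X$-torsion-freeness of the Iwasawa cohomology, exactly as established above) it lifts to $H^1(\Qp, \tfrac{1}{X}\cF^{+}_{\mathrm{bal}} D^c(\hf,\hg,\hh)^*)$. The goal is then to show that the image of this local class in the local cohomology of the quotient $Q \cong V(\hg,\hh)_0^*(\tau^{-1})(1+r)$ lies in the subspace $H^1(\Qp, \cF^{++})$, after which we invoke the Selmer-vanishing statement of \cref{sel-van-diag}.

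The key local computation is the identification $\cF^+D^c(\hf)^* = X\cdot\cF^+D(\hf)^*$ established in \cref{chain} and the subsequent discussion. This immediately gives $\tfrac{1}{X}\cF^+D^c(\hf)^* = \cF^+D(\hf)^* \subset D(\hf)^*$, so the image of $\tfrac{1}{X}\cF^+D^c(\hf)^*$ in the quotient $\tfrac{1}{X}V^c(\hf)^*/V(\hf)^*$ is zero. Feeding this into the three-term decomposition
\[
\cF^+_{\mathrm{bal}} = (\cF^+D^c(\hf)^*\hat\otimes\cF^+D(\hg)^*\hat\otimes D(\hh)^*) + (\cF^+D^c(\hf)^*\hat\otimes D(\hg)^*\hat\otimes\cF^+D(\hh)^*) + (D^c(\hf)^*\hat\otimes\cF^+D(\hg)^*\hat\otimes\cF^+D(\hh)^*),
\]
the first two summands kill $\hf$ and therefore map to $0$ in $Q$; only the third summand survives, contributing precisely the line $\cF^{++}D(\hg,\hh)_0^*(\tau^{-1})(1+r)$.

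Combining these two observations, the image of $\kappa(\hf,\hg,\hh)$ in $H^1(\QQ, Q)$ has local projection at $p$ contained in $H^1(\Qp, \cF^{++})$. Hence this image lies in the Greenberg Selmer group $H^1_{++}(\QQ, V(\hg,\hh)_0^*(\tau^{-1})(1+r))$. We now apply \cref{sel-van-diag} with $n = 1+r$ and $\chi = \tau^{-1}$: the hypothesis $n \ne (r+1)/2$ is automatic because $r \ge 0$ forces $1+r > (r+1)/2$. The Selmer group therefore vanishes, and the image is zero.

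The main obstacle is really just the bookkeeping in the second paragraph: tracking how the three-fold Panchishkin filtration $\cF^\pm_{\mathrm{bal}}$ interacts with the two-step filtration that distinguishes $V^c(\hf)^*$ from $V(\hf)^*$ locally at $p$. All the structural input -- $X$-torsion-freeness, the local vanishing on $\cF^-_{\mathrm{bal}}$, and Selmer vanishing -- has already been packaged in the preceding results, so once the tensor-product filtration calculation is unpacked, the conclusion is immediate.
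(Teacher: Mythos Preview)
Your proof is correct and follows the same strategy as the paper: show that the local image at $p$ lands in $\cF^{++}$, then invoke \cref{sel-van-diag} with $\chi=\tau^{-1}$, $n=1+r$. The paper's proof is a terse two-liner that simply asserts ``the image of the balanced submodule $\cF^+_{\mathrm{bal}}$ in this quotient is exactly the local condition defining $H^1_{++}$''; your second paragraph supplies the justification for that assertion by tracking the three summands of $\cF^+_{\mathrm{bal}}$ through the identification $\tfrac{1}{X}\cF^+D^c(\hf)^* = \cF^+D(\hf)^* \subset D(\hf)^*$, and you also make explicit the check that $1+r \ne (r+1)/2$.
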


\begin{proof}
The image of the balanced submodule $\mathcal F^+_{\mathrm{bal}}$ in this quotient is exactly the local condition defining the Greenberg Selmer group $H^1_{++}$ considered above (with $\chi = \tau^{-1}$ and $n = 1+r$). By \cref{sel-van-diag}, the Selmer group with this local condition is zero. Hence $\kappa(\hf,\hg,\hh)$ must map to the zero class in this module.
\end{proof}

\begin{corollary}
The class $\kappa(\hf,\hg,\hh)$ lifts (uniquely) to $H^1(\QQ,V(\hf,\hg,\hh)^*)$, and thus has a well defined image in the module
\[ \hat{\kappa}(f_{\beta},\hg,\hh) \in H^1\left(\QQ, V(\hg, \hh)^*_0(\psi^{-1})\right). \]
\end{corollary}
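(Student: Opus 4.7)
The plan is to mimic the argument already used in the Beilinson--Flach case (cf.~the corresponding corollary in \cref{sect:BFdef}). Consider the short exact sequence of $\cO(V_1 \times V_2 \times V_3)$-modules with Galois action
\[ 0 \longrightarrow V(\hf,\hg,\hh)^* \longrightarrow \tfrac{1}{X} V^c(\hf,\hg,\hh)^* \longrightarrow Q \longrightarrow 0, \]
where $Q \cong V(\hg,\hh)^*_0(\tau^{-1})(1+r)$ by the isomorphism exhibited in the preceding proposition. The long exact sequence in Galois cohomology, combined with the just-established vanishing of the image of $\kappa(\hf,\hg,\hh)$ in $H^1(\QQ, Q)$, immediately produces a lift $\tilde{\kappa}(\hf,\hg,\hh) \in H^1(\QQ, V(\hf,\hg,\hh)^*)$.

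For the uniqueness of this lift, the kernel of the map $H^1(\QQ, V(\hf,\hg,\hh)^*) \to H^1(\QQ, \tfrac{1}{X} V^c(\hf,\hg,\hh)^*)$ is the image of the connecting map from $H^0(\QQ, Q)$, so it suffices to show $H^0(\QQ, Q) = 0$. At any classical specialisation $(\ell, m)$ of $(\hg, \hh)$, the fibre $V(g_\ell, h_m)^*_0(\tau^{-1})(1+r)$ has no global Galois invariants by a standard Hodge--Tate weight comparison (since the $V(g_\ell)^*$ and $V(h_m)^*$ are 2-dimensional irreducible and the twist $(1+r)$ does not align the weights). Since such classical points are Zariski-dense in $V_2 \times V_3$ and $H^0(\QQ, Q)$ is $\cO(V_2 \times V_3)$-torsion-free (by the very same sequence-of-cohomology argument as in the preceding ``Families over punctured discs'' proposition), we conclude $H^0(\QQ, Q) = 0$.

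With the unique lift $\tilde{\kappa}(\hf,\hg,\hh)$ in hand, we specialise at $\hk_1 = r$ to obtain a class in $H^1(\QQ, V(f_\beta,\hg,\hh)^*)$. Tensoring the short exact sequence \eqref{ses-coh2} with $V(\hg)^* \hat\otimes V(\hh)^*\bigl(-1-\tfrac{r+\hk_2+\hk_3}{2}\bigr)$ produces
\[ 0 \longrightarrow V(\hg,\hh)^*_0(\tau^{-1})(1+r) \longrightarrow V(f_\beta,\hg,\hh)^* \longrightarrow V(\hg,\hh)^*_0(\psi^{-1}) \longrightarrow 0, \]
and pushing the specialisation of $\tilde{\kappa}(\hf,\hg,\hh)$ to the quotient defines the desired class $\hat{\kappa}(f_\beta,\hg,\hh)$.

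The only real technical point is the vanishing $H^0(\QQ, Q) = 0$ underlying uniqueness; but this is of a milder flavour than the Selmer vanishing of \cref{sel-van-diag}, since it is only a statement about Galois invariants, and is handled by the same Zariski-density-plus-torsion-freeness argument used throughout the section.
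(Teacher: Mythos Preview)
Your argument is correct and is exactly the one the paper intends: the corollary is stated without proof, as an immediate consequence of the preceding proposition together with the $X$-torsion-freeness of $H^1$ established in ``Families over punctured discs'', and your long-exact-sequence argument spells this out. One small expository point: for the uniqueness step you invoke torsion-freeness of $H^0(\QQ, Q)$ via ``the same sequence-of-cohomology argument'', but in fact this is even simpler---$H^0(\QQ, Q)$ is a submodule of the free $\cO(V_2\times V_3)$-module $Q$, hence automatically torsion-free---so no cohomological input is needed there.
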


\begin{proposition}
The class $\hat{\kappa}(f_{\beta},\hg,\hh)$ is divisible by the logarithmic distribution $\log^{[r + 1]}(\tfrac{r - \hk_2 + \hk_3}{2})$.
\end{proposition}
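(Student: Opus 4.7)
\emph{Proof plan.} I plan to adapt the argument of \cref{div-log} to the triple-product setting. The function $\log^{[r+1]}(\tfrac{r-\hk_2+\hk_3}{2}) \in \cO(V_2 \times V_3)$ factors with simple zeros along each hypersurface $L_j = \{(\hk_2,\hk_3) : \hk_2 - \hk_3 = r - 2j\}$, for $j \in \{0, 1, \ldots, r\}$. Since $H^1(\QQ, V(\hg,\hh)_0^*(\psi^{-1}))$ is torsion-free over $\cO(V_2 \times V_3)$ (by the same $H^0$-vanishing argument used earlier), it suffices to show that $\hat\kappa(f_\beta,\hg,\hh)$ vanishes along each $L_j$.

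Fix $j \in \{0, \ldots, r\}$, and consider classical integer points $(r,\ell,m)$ on $L_j$ satisfying $\ell \ge r$ (together with the parity condition from \cref{es6}): we have $\ell - m = r - 2j$, so $|\ell - m| \le r$ and $\ell + m = 2\ell - r + 2j \ge r$, which places $(r,\ell,m)$ in the balanced region. These integer points are Zariski-dense on $L_j$. At such a point, the same mechanism as in \cref{div-log} applies: the nearly overconvergent projector $\Pr^{[r+1]}_\hf$ (singular at $X = 0$) agrees up to a nonzero scalar with the regular projector $\Pr^{[j]}_\hf$ on the relevant image of $\mathscr D_{U - j} \otimes \operatorname{TSym}^j$ in $\mathscr D_{U-(r+1)} \otimes \operatorname{TSym}^{r+1}$, through which the specialisation factors. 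Hence the specialisation of $\kappa(\hf,\hg,\hh)$ at $(r,\ell,m)$ equals the Abel--Jacobi image of the classical diagonal cycle on the compactified triple product $X_1(N)^3$, projected to the $(f_\beta, g_\ell, h_m)$-eigensystem.

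This projection factors through the $[T' = f_\beta]$-eigenspace in $H^1(\overline{X_1(N)}, \mathscr V_r^*(1))$ in the first coordinate. By the discussion in \cref{sect:galrep}, this eigenspace vanishes: the critical-slope Eisenstein eigenspace in $H^1(\overline{Y_1(N)})$ is not in the image of the natural map $H^1_c \to H^1$, so it vanishes in parabolic cohomology, which is $H^1$ of the compactified curve. Therefore the specialisation of $\hat\kappa(f_\beta,\hg,\hh)$ vanishes at our Zariski-dense subset of $L_j$. By torsion-freeness this extends to vanishing on all of $L_j$, and the union over $j$ then yields divisibility by $\log^{[r+1]}(\tfrac{r-\hk_2+\hk_3}{2})$.

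The principal technical obstacle is the projector-comparison step. Conceptually it is identical to the Beilinson--Flach case of \cref{div-log}, but carrying it out requires extending the diagonal-cycle construction of \cite{BSV1} so that one of the input families is allowed to specialise to a critical-slope Eisenstein series; this amounts to tracking the singularity of the nearly-overconvergent projector on the triple product and identifying its classical specialisations with Abel--Jacobi images of cycles on $X_1(N)^3$, along the same lines as the treatment for Beilinson--Flach classes in the preceding section.
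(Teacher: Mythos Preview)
Your proposal is correct and follows essentially the same approach as the paper: show vanishing at balanced integer specialisations $(r,\ell,m)$ by comparing the projectors $\Pr_{\hf}^{[j]}$ and $\Pr_{\hf}^{[r+1]}$, observe that the resulting classical projection is zero because the diagonal cycle extends to $X_1(N)$ in the first factor (where the Eisenstein eigenspace vanishes), and conclude by Zariski-density and torsion-freeness. Your organisation of the argument---treating each hypersurface $L_j$ separately and identifying it explicitly as a component of the zero locus of $\log^{[r+1]}(\tfrac{r-\hk_2+\hk_3}{2})$---is in fact somewhat more precise than the paper's own formulation.
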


\begin{proof}
We identify weights with quadruples $(k,\ell,m,j)$ with $k+\ell+m=2j$. We claim that the class $\kappa(\hf,\hg,\hh)$ vanishes at $(r,\ell,m,j)$ for all $\ell,m \geq 0$ such that $(r,\ell,m)$ is balanced, i.e. $|\ell-m| \le r$ with $\ell+m+r$ even. Indeed, the specialization of $\kappa(\hf,\hg,\hh)$ at one such point factors through the image of $\sD_{U - j} \otimes \operatorname{TSym}^j$ in $\sD_{U - (r+1)} \otimes \operatorname{TSym}^{(r+1)}$, and the maps $\Pr_{\hf}^{[j]}$ and $\Pr_{\hf}^{[r+1]}$ agree on this image up to a non-zero scalar.

Since $\Pr_{\hf}^{[j]}$ for $0 \le j \le r$ do not have poles at $X = 0$, it follows that the specialisations of $\kappa(\hf,\hg,\hh)$ at triples $(r, \ell, m, \chi)$, for $|\ell-m| \le r$, $\ell+m+r$ even and $\chi$ locally-algebraic of degree $\in \{0, \dots, r\}$, interpolate the projections of the diagonal cycles to the $(E_{r+2}^{\crit}, g_\ell,h_m)$-eigenspaces in classical cohomology. Since the diagonal classes lift to $X_1(N) \times Y_1(N) \times Y_1(N)$, these projections are always 0. By Zariski-density, the class specialises to 0 everywhere in $(\{r \} \times V_2 \times V_3) \cap (|\ell-m| \leq r)$ with $\ell+m+r$ even, and the desired divisibility follows.
\end{proof}

Since the Iwasawa cohomology is torsion-free, there is a unique class \[ \kappa(f_{\beta},\hg,\hh) \in H^1(\QQ, V(\hg,\hh)_0^*(\psi^{-1})) \] such that \[ \hat{\kappa}(f_{\beta}, \hg, \hh) = \log^{[r + 1]}(\tfrac{r - \hk_2 + \hk_3}{2}) \cdot \kappa(f_{\beta},\hg,\hh). \]

\begin{proposition}
This class $\kappa(f_{\beta},\hg,\hh)$ maps to 0 in the cohomology of the rank-one quotient $\QQ_p(\psi^{-1}) \otimes \cF^- V(\hg)^* \otimes \cF^- V(\hh)^*$.
\end{proposition}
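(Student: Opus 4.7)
The strategy is to exploit the Panchishkin-type local condition satisfied by $\kappa(\hf,\hg,\hh)$ and to observe that the projection onto the rank-one quotient in question factors through a quotient where this local condition already forces vanishing. Once that is done, the only remaining step is to strip off the factor of $\log^{[r+1]}$ relating $\hat\kappa(f_\beta,\hg,\hh)$ and $\kappa(f_\beta,\hg,\hh)$.

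First I would verify the following orthogonality. Consider the natural $(\varphi,\Gamma)$-module projection
\[
 \pi \colon D^c(\hf,\hg,\hh)^* \twoheadrightarrow D^c(\hf)^* \mathbin{\hat\otimes} \cF^- D(\hg)^* \mathbin{\hat\otimes} \cF^- D(\hh)^* \otimes (\text{twist}).
\]
The three summands defining $\cF^+_{\mathrm{bal}}$ each contain at least one of $\cF^+ D(\hg)^*$ or $\cF^+ D(\hh)^*$ as a tensor factor, and these map to zero in the corresponding $\cF^-$-quotient. Hence $\pi$ annihilates $\cF^+_{\mathrm{bal}}$, so $\pi$ factors through $\cF^-_{\mathrm{bal}}$. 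Combining this with the vanishing of $\loc_p \kappa(\hf,\hg,\hh)$ in $H^1(\Qp, \tfrac{1}{X}\cF^-_{\mathrm{bal}} D^c(\hf,\hg,\hh)^*)$ established earlier, we conclude that $\loc_p \kappa(\hf,\hg,\hh)$ dies in $H^1\bigl(\Qp, \tfrac{1}{X} D^c(\hf)^* \mathbin{\hat\otimes} \cF^- D(\hg)^* \mathbin{\hat\otimes} \cF^- D(\hh)^* \otimes (\text{twist})\bigr)$.

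Next I would reduce modulo $X$. The quotient map $\tfrac{1}{X}V^c(\hf)^* \twoheadrightarrow V(\hf)^*/V^c(\hf)^* \cong \QQ_p(\psi^{-1})$ at the fibre $X=0$ (the same quotient used to define $\hat\kappa(f_\beta, \hg, \hh)$) composes with the vanishing above to show that $\loc_p\hat\kappa(f_\beta,\hg,\hh)$ is zero in $H^1(\Qp, \QQ_p(\psi^{-1}) \otimes \cF^-V(\hg)^* \otimes \cF^-V(\hh)^*)$. Finally, since $\hat\kappa(f_\beta,\hg,\hh) = \log^{[r+1]}(\tfrac{r-\hk_2+\hk_3}{2}) \cdot \kappa(f_\beta,\hg,\hh)$ by construction, dividing through by $\log^{[r+1]}$ yields the desired vanishing of $\loc_p \kappa(f_\beta, \hg, \hh)$, provided the target cohomology is torsion-free as an $\cO(V_2 \times V_3)$-module.

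The main obstacle I expect lies in justifying this last torsion-freeness: one needs the vanishing of $H^0(\Qp, \QQ_p(\psi^{-1}) \otimes \cF^-V(\hg)^* \otimes \cF^-V(\hh)^*)$ at every maximal ideal of $\cO(V_2 \times V_3)$. This should follow from a Hodge--Tate weight comparison along the lines of \cite[Lemma 8.2.6]{KLZ17}, used already in the companion freeness proposition above: the relevant character has nowhere-vanishing Hodge--Tate weight on $V_2 \times V_3$, which rules out Galois invariants and yields the required torsion-freeness of $H^1$. Once this is in hand, cancellation of $\log^{[r+1]}$ is immediate and the proposition follows.
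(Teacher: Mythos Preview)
Your approach is essentially the same as the paper's, but considerably more explicit. The paper's proof is two sentences: since $\kappa(\hf,\hg,\hh)$ lies in the balanced Selmer group, its image in $V(\hf)^* \otimes \cF^- V(\hg)^* \otimes \cF^- V(\hh)^*$ vanishes identically over $V_1 \times V_2 \times V_3$, so it is in particular zero when one specialises at $\hk = r$. The paper does not mention the division by $\log^{[r+1]}$ at all; you are right that this step requires the torsion-freeness you describe, and the paper simply glosses over it.

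One small wrinkle in your write-up: in your ``reduce modulo $X$'' step you invoke a quotient map $\tfrac{1}{X}V^c(\hf)^* \twoheadrightarrow V(\hf)^*/V^c(\hf)^*$, but this is not how $\hat\kappa(f_\beta,\hg,\hh)$ is defined. The class $\hat\kappa$ is obtained by first lifting $\kappa(\hf,\hg,\hh)$ to $H^1(\QQ, V(\hf,\hg,\hh)^*)$, then reducing modulo $X$ to land in $V(f_\beta)^* \otimes (\dots)$, and finally projecting to the quotient $\QQ_p(\psi^{-1})$ of $V(f_\beta)^*$. There is no single surjection from $\tfrac{1}{X}V^c(\hf)^*$ onto $\QQ_p(\psi^{-1})$ doing this in one step (the cokernel of $V(\hf)^* \hookrightarrow \tfrac{1}{X}V^c(\hf)^*$ is the $\tau^{-1}(1+r)$ piece, not the $\psi^{-1}$ piece). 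It is cleaner, as the paper does, to work directly with the lifted class in $V(\hf,\hg,\hh)^*$: your step~1 then shows that its projection to $D(\hf)^* \otimes \cF^- D(\hg)^* \otimes \cF^- D(\hh)^*$ vanishes over all of $V_1 \times V_2 \times V_3$, specialising at $X=0$ gives vanishing of $\hat\kappa$ in the rank-one quotient, and your torsion-freeness argument handles the final division by $\log^{[r+1]}$. With that adjustment your argument goes through.
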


\begin{proof}
 Since $\kappa(\hf, \hg, \hh)$ lies in the balanced Selmer group, its image in $V(\hf)^* \otimes \cF^- V(\hg)^* \otimes \cF^- V(\hh)^*$ vanishes identically over $V_1 \times V_2 \times V_3$. So it is in particular zero when we specialise at $\hk = r$.
\end{proof}


\subsection{The $p$-adic $L$-function}

We assume for the remaining of this section that the tame level of the Eisenstein series is trivial. For the construction of the triple product $p$-adic $L$-function, the interpolation property also applies at $k = r$. Because of the functional equation for the Rankin $L$-function and our assumption on the tame level, there is an equality
\[
 L(E_{r+2}(\psi,\tau), g_{\ell}, h_m, 2+\frac{r+\ell+m}{2}) = L(g_{\ell}, h_m \times \psi, 2+\frac{r+\ell+m}{2})^2,
\]
where we have used that $\psi \tau \chi_g \chi_h = 1$. Observe that the restriction of $\Lp^{\bf g}(\hf,\hg,\hh)$ to the region defined by $k = r$ is uniquely determined by the interpolation property at crystalline points, and we have then an equality of $p$-adic $L$-functions \[ \Lp^{\hg}(E_{r+2}(\psi,\tau),\hg,\hh) = L_p^{\hg}(\hg, \hh \times \psi,2+\frac{r+\hk_2+\hk_3}{2}). \]


\subsection{Perrin-Riou maps}

We can relate the previous $p$-adic $L$-function to the image of $\text{loc}_p(\kappa(\hf,\hg,\hh)$ under the projection to $\mathcal F^- V(\hg)^* \otimes \mathcal F^+ V(\hh)^*$. More precisely, Perrin-Riou's regulator gives us a map
\[
  \operatorname{Col}_{\mathbf{b}_{\hf}^+ \otimes \eta_{\hg} \otimes \omega_{\hh}} = \left\langle \mathcal{L}^{\mathrm{PR}}_{\cF^{+-+}}(-), \mathbf{b}_{\hf}^+ \otimes \eta_{\hg} \otimes \omega_{\hh} \right\rangle :
  H^1(\QQ_p, \cF^{+-+} D(\hf, \hg, \hh)^*) \to \cO(V_1 \times V_2 \times V_3)
 \]
 which interpolates the Perrin-Riou regulators for $f_k \otimes g_{\ell} \times h_m$. Indeed, for $z \in \left(\cF^{+-+} D(\hf,\hg, \hh)^* \right) ^{\varphi^{-1} = 1}$, this map sends $z$ to
 \[ \langle\iota((1 - \varphi) z), \mathbf{b}_{\hf}^+ \otimes \eta_{\hg} \otimes \omega_{\hh} \rangle, \]
 where $\iota$ is now the inclusion
 \[
  \left(\cF^{+-+} D(\hf, \hg, \hh)^*\right)^{\varphi^{-1} = 0} \into
  \Dcris\left(\cF^{+}D(\hf)^*(-1-\hk_1)\right)\otimes \Dcris\left(\cF^{-}D(\hg)^*\right) \otimes \Dcris\left(\cF^{+}D(\hh)^*(-1-\hk_3) \right).
 \]
Proceeding as with Beilinson--Flach classes, we conclude that
 \[ \operatorname{Col}_{\mathbf{b}_{\hf}^+ \otimes \eta_{\hg} \otimes \omega_{\hh}}(\kappa(\hf,\hg, \hh)) \bmod X = c_r \left\langle \mathcal{L}^{\mathrm{PR}}_{\cF^{-+} V(\hg)^* \hat \otimes_{\QQ_p} V(\hh)^*(\psi^{-1})}(\kappa(E_{r+2}^{\crit},\hg,\hh)), \eta_{f_r}^{\alpha} \otimes \eta_{\hg} \otimes \omega_{\hh} \right\rangle.
 \]

The following result follows from the reciprocity law of \cite{BSV1}, with the obvious modifications to adapt it to the Coleman case, exactly as in \cite{loefflerzerbes16}.

 \begin{theorem}
  We have
  \[ \operatorname{Col}_{\mathbf{b}_{\hf}^+ \otimes \eta_{\hg} \otimes \omega_{\hh}}(\kappa(\hf, \hg, \hh)) = c(\hk) \cdot \Lp^{\hg}(\hf,\hg, \hh), \]
  where $c(\hk)$ is a meromorphic function on $V_1$, regular and non-vanishing at all integer weights $k \ge -1$ except possibly at $k=r$ itself, where it is regular.
 \end{theorem}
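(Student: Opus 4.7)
The plan is to mirror, step for step, the argument just carried out in the Beilinson--Flach setting. First, I would restrict everything to the punctured disc, over which $\hf$ is a Coleman family of cuspidal non-critical-slope forms. On this region the reciprocity law of Bertolini--Seveso--Venerucci \cite{BSV1}, adapted to the Coleman-family setting by the methods of \cite{loefflerzerbes16}, yields
\[
 \operatorname{Col}_{\omega_{\hf} \otimes \eta_{\hg} \otimes \omega_{\hh}}(\kappa(\hf, \hg, \hh)) = \Lp^{\hg}(\hf,\hg, \hh),
\]
where $\omega_{\hf}$ denotes any basis of $\Dcris(\cF^+ D(\hf)^*(-1-\hk))$ over the punctured disc whose specialisation at every classical $k \ne r$ is the de Rham class $\omega_{f_k}$. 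Substituting the relation $\mathbf{b}_{\hf}^+ = c(\hk) \cdot \omega_{\hf}$ coming from the discussion of crystalline periods produces the stated identity; at each classical $k \ne r$ the scalar $c(k)$ is nonzero by the very definition of $\omega_{f_k}$ and the fact that the standard reciprocity produces a genuine interpolation of central $L$-values.

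Second, I would observe that the ratio $c(\hk)$ depends only on the variable $\hk$. Indeed, at each classical $k \ne r$ the comparison between $\mathbf{b}_{\hf}^+(k)$ and $\omega_{f_k}$ is intrinsic to $\hf$ and involves neither $\hg$ nor $\hh$; by Zariski density on the punctured disc, together with torsion-freeness of the relevant Iwasawa cohomology, these pointwise identities glue to a single identity of meromorphic functions of $\hk$ alone, automatically regular and nonvanishing away from $k = r$.

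The most delicate step is regularity at $k = r$. The right-hand side $\Lp^{\hg}(\hf, \hg, \hh)$ is analytic on all of $V_1 \times V_2 \times V_3$, and its restriction to $\{r\} \times V_2 \times V_3$ is not identically zero thanks to the factorisation $\Lp^{\hg}(E_{r+2}(\psi,\tau),\hg,\hh)^2 = L_p^{\hg}(\hg, \hh \times \psi, 2+\tfrac{r+\hk_2+\hk_3}{2})$ recorded just above, the right-hand side of which is nonzero by the non-vanishing properties of the Rankin--Selberg $p$-adic $L$-function. The left-hand side is also regular at $k = r$: although $\kappa(\hf, \hg, \hh)$ a priori has a pole absorbed in the factor $\tfrac{1}{X}$, the pairing with $\mathbf{b}_{\hf}^+$ compensates for it, since $\mathbf{b}_{\hf}^+ \bmod X = c_r t^{r+1} \eta_{f_r}^{\alpha} \otimes e_{-(r+1)}$ contributes a zero of the appropriate order, exactly as in the computation of $\operatorname{Col}_{\mathbf{b}_{\hf}^+ \otimes \eta_{\hg} \otimes \omega_{\hh}}(\kappa(\hf, \hg, \hh)) \bmod X$ already carried out. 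Consequently $c(\hk)$ is the ratio of a regular function by a function which is not identically zero along $\{r\} \times V_2 \times V_3$, so it cannot have a pole at $k = r$. The main technical obstacle lies in the first step: one has to carefully lift the BSV reciprocity law to the non-ordinary Coleman-family setting for triple products, paralleling the adaptation performed in \cite{loefflerzerbes16} for Beilinson--Flach; once this is in hand, the remaining arguments are formal consequences of the structural properties of $\mathbf{b}_{\hf}^+$ already established.
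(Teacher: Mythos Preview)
Your approach is essentially the paper's: apply the BSV reciprocity law at cuspidal specialisations $k \ne r$ to see the ratio $\operatorname{Col}/\Lp^{\hg}$ is a function of $\hk$ alone, nonvanishing at such $k$, and then invoke the nonvanishing of $\Lp^{\hg}(\hf,\hg,\hh)$ along $\{r\}\times V_2\times V_3$ to exclude a pole of $c(\hk)$ at $r$.

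One small correction: your justification that the left-hand side is regular at $k=r$ via ``$t^{r+1}$ compensating the $\tfrac{1}{X}$'' conflates the weight variable $X$ with the Robba-ring variable $t$; these are unrelated, and a factor of $t^{r+1}$ cannot cancel a pole in $X$. The correct reason is simpler: by the Corollary preceding this subsection, $\kappa(\hf,\hg,\hh)$ has already been lifted from $\tfrac{1}{X}V^c(\hf,\hg,\hh)^*$ to $V(\hf,\hg,\hh)^*$, and $\mathbf{b}_{\hf}^+$ is a basis of $\Dcris(\cF^+ D(\hf)^*(-1-\hk))$ over all of $V_1$, so the Coleman map is regular at $k=r$ on the nose.
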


\begin{proof}
It follows easily from the reciprocity laws for diagonal cycles that $ \operatorname{Col}_{\mathbf{b}_{\hf}^+ \otimes \eta_{\hg} \otimes \omega_{\hh}}(\kappa(\hf,\hg, \hh)) / L_p^{\hg}(\hf, \hg, \hh)$ is a function of $\hk$ alone, and this ratio does not vanish at any integer $k \ge -1$ where $f_k$ is classical; it is equal to the fudge-factor $c_k$ defined above using \ref{subsec:periods}.

Moreover, since $L_p^{\hg}(\hf,\hg, \hh)$ is well-defined and non-zero along $\{ \ell\} \times V_2 \times V_3$, we conclude that $c(\hk)$ does not have a pole at $\hk$ (although it might have a zero there).
 \end{proof}

 \begin{remark}
 In this study we have only considered the reciprocity law for the $p$-adic $L$-function where the dominant family is not the one passing through the critical Eisenstein series point, where some subtle complications may arise. We expect to come back to this issue in forthcoming work.
 \end{remark}

\subsection{Leading terms}
If $c(r) \ne 0$, then we have thus constructed a class in the cohomology of $V(\hg \times \hh \times \psi)^*$ whose regulator agrees with that of Beilinson--Flach's Euler system for $\hg \times \hh \times \psi$.

We claim that if $c(r) = 0$, then in fact $\kappa(\hf, \hg, \hh)$ is divisible by $X$. If $c(r) = 0$, then $\kappa(E_{\ell+2}^{\crit},\hg,\hh)$ is in the Selmer group with local condition $\cF^+V(\hg)^* \hat \otimes V(\hh)^*$, which is zero following the proof of Proposition \cref{sel-van-diag} (considering now only one of the $p$-adic $L$-functions, and therefore a slightly different local condition). So $\kappa(\hf, \hg, \hh) \bmod X$ would have to land in the cohomology of $V(\hg)_{\mathrm{sub}}^* \otimes V(\hh)^*$ instead; but then we are seeing the projection into $\cF^-$, not $\cF^+$, so by the local properties of Beilinson--Flach elements again (for $\hg \times \hh \times \tau$, instead of $\hg \times \hh \times \psi$, this time) this is zero as well.

So we can divide out a factor of $X$ from both $\kappa(\hf, \hg,\hh)$ and $c(\hk)$, and repeat the argument. Since $c$ is not identically 0 this must terminate after finitely many steps.

\begin{proposition}
  Let $n \ge 0$ be the order of vanishing of $c_{\hf}$ at $k = r$. Then $X^{-n} \hat \kappa(\hf, \hg, \hh)$ is well-defined and non-zero modulo $X$. This leading term projects non-trivially into the quotient $ H^1(\QQ, V(\hg)^* \otimes V(\hh)^* (\psi^{-1})(-\hj))$. Its image under the Perrin-Riou regulator is given by
  \[ c^{*}_{\hf}(r) \cdot \log^{[r+1]} \cdot \Lp^{\hg}(E_{r + 2}(\psi, \tau), \hg, \hh), \]
  where $c^{*}_{\hf}(r) \in L^\times$.
 \end{proposition}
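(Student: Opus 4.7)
The plan is to imitate very closely the proof given just above for the analogous leading-term statement in the Beilinson--Flach setting (\cref{sect:BFdef}), since the geometric and local input -- the reciprocity law coming from the Andreatta--Iovita-type triple product $p$-adic $L$-function, together with the Selmer vanishing result \cref{sel-van-diag} -- is already in place.

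First, I would pin down the meaning of ``$X^{-n}\kappa(\hf,\hg,\hh)$''. This is exactly the iterative procedure sketched in the paragraph preceding the statement: suppose inductively that for some $0 \le m < n$ the class $X^{-m}\kappa(\hf,\hg,\hh)$ has already been defined in $H^1(\QQ, \tfrac{1}{X}V^c(\hf,\hg,\hh)^*)$ and is divisible by $X$. Reducing mod $X$ would then give a class in one of the two rank-$1$ subquotients of the previous ``cross'' diagram (either the $\psi^{-1}$-piece or the $\tau^{-1}(1+r)$-piece); but Selmer vanishing applied to $V(\hg,\hh)_0^*(\chi)(n)$ for the relevant $\chi \in \{\psi^{-1},\tau^{-1}\}$ and $n \in \{0, 1+r\}$ (note $n=1+r \ne \tfrac{r+1}{2}$ for $r \ge 1$, and the $r=0$ case is handled separately by $p$-decency) forces the class to be zero, allowing one more division by $X$. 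Termination after exactly $n$ steps follows from the fact that $c_{\hf}(\hk)$ has order of vanishing $n$ at $r$ but is not identically zero, together with the reciprocity law matching the regulator of the class with $c_{\hf}(\hk)\cdot L_p^{\hg}(\hf,\hg,\hh)$, the latter being non-vanishing along $\{r\}\times V_2\times V_3$.

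Second, I would check non-triviality of the projection to $H^1(\QQ, V(\hg)^*\otimes V(\hh)^*(\psi^{-1})(-\hj))$. The class $X^{-n}\kappa(\hf,\hg,\hh) \bmod X$ \emph{a priori} lies in $H^1(\QQ,\tfrac{1}{X}V^c(\hf,\hg,\hh)^*/V(\hf,\hg,\hh)^*)$ or $H^1(\QQ, V(\hg,\hh)_0^*(\psi^{-1}))$; the first option is ruled out by the argument already used to produce $\hat\kappa(f_\beta,\hg,\hh)$, and the second -- assuming the projection to the $\psi^{-1}$-quotient were zero -- would force the class to land in a Greenberg-type Selmer group that vanishes by \cref{sel-van-diag}, contradicting the nonvanishing just established.

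Third, I compute the image under the Perrin-Riou map. As in the Beilinson--Flach case, specialising the regulator formula
\[ \operatorname{Col}_{\mathbf{b}_{\hf}^+ \otimes \eta_{\hg} \otimes \omega_{\hh}}(\kappa(\hf,\hg,\hh)) \;=\; c_{\hf}(\hk) \cdot L_p^{\hg}(\hf,\hg,\hh) \]
in a neighbourhood of $\hk = r$, dividing both sides by $X^n$, and using that the reduction mod $X$ of $\mathbf{b}_{\hf}^+$ picks up the factor $c_r\, t^{r+1}$ (which on the Iwasawa side corresponds to multiplication by $\log^{[r+1]}$, up to the appropriate shift in the cyclotomic variable dictated by the character $\tfrac{r-\hk_2+\hk_3}{2}$ already present in the previous divisibility statement), I read off the desired identity with $c^*_{\hf}(r) \in L^\times$ the product of $c_r$ and the leading Taylor coefficient $\lim_{k\to r}(c_{\hf}(k)/X^n)$.

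The main obstacle I expect is a purely bookkeeping one: at each step of the iteration the class lives in a slightly different Iwasawa cohomology module (alternating between $V^c$ and $V$, shifted by powers of $X$, as in the chain \eqref{chain}), and one must verify that the division-by-$X$ procedure is compatible with the natural maps between these modules and with the formation of the Perrin-Riou regulator, so that the ``leading term'' is canonically defined and the $\log^{[r+1]}$ factor appears with the exact shift $\tfrac{r-\hk_2+\hk_3}{2}$ in its cyclotomic argument. Once this bookkeeping is carried out, the Selmer vanishing and reciprocity inputs do the rest.
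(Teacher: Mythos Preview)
Your proposal is correct and follows essentially the same strategy as the paper: the proposition is simply a restatement of the iterative argument given in the paragraphs immediately preceding it, and your plan reproduces that argument faithfully.

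Two small remarks. First, at each inductive step the class $X^{-m}\kappa(\hf,\hg,\hh) \bmod X$ lives in the full $H^1\big(\QQ, V(f_\beta)^* \otimes V(\hg,\hh)^*_0\big)$, not automatically in one of the two one-dimensional pieces; the argument is sequential (first kill the $\psi^{-1}$-quotient using the regulator vanishing, then the $\tau^{-1}(1+r)$-sub). For the $\psi^{-1}$-piece, note that what you actually know is vanishing in $\cF^{-+}$ and $\cF^{--}$, i.e.~the class lies in the codimension-2 Selmer group $H^1_{+\circ}$, not \emph{a priori} in the codimension-3 group $H^1_{++}$ of \cref{sel-van-diag}. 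The paper handles this by invoking the \emph{proof} (not the statement) of that proposition, using only one of the two Rankin $p$-adic $L$-functions; you should do the same rather than citing \cref{sel-van-diag} directly. Second, your aside that the $r=0$ case needs $p$-decency to avoid $n=\tfrac{r+1}{2}$ is unnecessary: for $r \ge 0$ neither $n=0$ nor $n=1+r$ can equal $\tfrac{r+1}{2}$.
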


 We denote the resulting class by $\hat\kappa^*(f_\beta, \hg, \hh)$. If $n = 0$, we have seen above that this class is divisible by $\log^{[r + 1]}(\tfrac{r - \hk_2 + \hk_3}{2})$; for $n > 0$ this is less obvious, but proceeding as before it follows from the proof of the next theorem:

\begin{theorem}
Under the big image assumptions of \cite[\S 11]{KLZ17}, we have
\[ \hat\kappa^*(f_\beta, \hg, \hh) = \left(C \cdot \log^{[r + 1]}(\tfrac{r - \hk_2 + \hk_3}{2}) \right) \cdot {}_d \kappa (\hg, \hh \times \psi), \]
for some nonzero constant $C$ and where ${}_d \kappa (\hg, \hh \times \psi)$ is the two-variable Beilinson--Flach class indexed by the two weight variables.
\end{theorem}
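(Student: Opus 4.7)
The plan is to mirror the proof of the analogous Beilinson--Flach theorem: show that the two cohomology classes on either side of the claimed identity have the same image under an appropriate Perrin-Riou regulator, and then invoke a Selmer-vanishing statement to promote this identity of regulators to an equality of classes. In the present setting, the Beilinson--Flach reciprocity law \eqref{eq:BFrecip} applied to the pair $(\hg, \hh \times \psi)$ plays the role that Kato's reciprocity law played in the earlier argument, and the Selmer vanishing is obtained through the Euler system machine of \cite[\S 11]{KLZ17}, which is precisely where the big image hypothesis enters.

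Concretely, the previous proposition computes the image of $\hat\kappa^*(f_\beta, \hg, \hh)$ under the Perrin-Riou regulator as $c^*_\hf(r) \cdot \log^{[r+1]} \cdot \Lp^{\hg}(E_{r+2}(\psi, \tau), \hg, \hh)$. Using the factorization of the triple-product $p$-adic $L$-function at $\hk_1 = r$ established above, together with the Artin-formalism identity $L(E_{r+2}(\psi, \tau), g_\ell, h_m, 2 + \tfrac{r+\ell+m}{2}) = L(g_\ell, h_m \times \psi, 2 + \tfrac{r+\ell+m}{2})^2$ on the classical side, one identifies $\Lp^{\hg}(E_{r+2}(\psi, \tau), \hg, \hh)$ with the restriction of the Rankin--Selberg $p$-adic $L$-function $L_p^{\hg}(\hg, \hh \times \psi)$ to the appropriate cyclotomic slice, up to a nonzero constant. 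The Beilinson--Flach reciprocity law \eqref{eq:BFrecip} applied to $(\hg, \hh \times \psi)$ then recognizes this as the Coleman image of ${}_d \kappa(\hg, \hh \times \psi)$, with the factor $\log^{[r+1]}(\tfrac{r - \hk_2 + \hk_3}{2})$ arising upon specialising $\log^{[r+1]}$ at the cyclotomic character determined by the weights. Matching constants produces the desired regulator identity with an explicit nonzero $C$.

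The final step, and the main technical obstacle, is to promote this identity of regulator images to an identity of global cohomology classes. For this one needs an analogue of \cref{prop:Selvanish} for the representation $V(\hg)^* \otimes V(\hh)^*(\psi^{-1})(-\hj)$, asserting that the Greenberg Selmer group cut out by the local condition matching the kernel of the Coleman map vanishes. In contrast to the Kato setting, where Kato's own Euler system supplied this vanishing directly, here one must invoke the full Euler system machine for Beilinson--Flach elements, whose success depends precisely on the big image hypothesis of \cite[\S 11]{KLZ17}. Once that hypothesis is in place, the Selmer bound forces any cohomology class with trivial Coleman image to vanish, and applied to the difference of the two sides of the claimed identity this gives the theorem. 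As in the Beilinson--Flach argument, the same Selmer-vanishing input also handles the case $n > 0$ by iterating the division-by-$X$ procedure that produced $\hat\kappa^*$ in the first place.
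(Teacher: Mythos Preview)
Your approach is essentially the paper's: compute the regulator image of $\hat\kappa^*(f_\beta,\hg,\hh)$ via the previous proposition and the factorisation of $\Lp^{\hg}(E_{r+2}(\psi,\tau),\hg,\hh)$, identify it via the Beilinson--Flach reciprocity law \eqref{eq:BFrecip} with the regulator image of ${}_d\kappa(\hg,\hh\times\psi)$ up to the logarithmic factor, and then invoke a Selmer-vanishing statement under the big image hypotheses of \cite[\S 11]{KLZ17} to promote this to an equality of classes.

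The one point you elide under ``matching constants'' is the $d$-factor. The reciprocity law \eqref{eq:BFrecip} carries the term $C_d(\hg,\hh\times\psi,\hj)$, and for your constant $C$ to be genuinely nonzero you must check that this specialises to a nonzero \emph{constant} over $V_2\times V_3$ when $\hj = 1 + \tfrac{r+\hk_2+\hk_3}{2}$. The paper makes this explicit: at that value of $\hj$ one computes
\[
C_d = d^2 - d^{2+r}(\varepsilon_{\hg}\varepsilon_{\hh}\psi^2)(d)^{-1} = d^2\bigl(1 - d^r\,\tau\psi^{-1}(d)\bigr),
\]
which is indeed independent of $(\hk_2,\hk_3)$; and the $p$-decency hypothesis guarantees $\tau\psi^{-1}$ is nontrivial when $r=0$, so one may choose $d$ with $1 - d^r\tau\psi^{-1}(d)\ne 0$. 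With that in hand, $C = d^{-2}(1 - d^r\tau\psi^{-1}(d))^{-1} c_{\hf}^*(r)$ is the explicit nonzero constant, and the rest of your argument goes through as written.
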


\begin{proof}
The class obtained from the diagonal cycle lies in $V(\hg)^* \otimes V(\hh \times \psi)^*(-1-\tfrac{r+\hk_2+\hk_3}{2})$, so it lives in the same space as the Beilinson--Flach class for $\hj = 1 + \tfrac{r+\hk_2+\hk_3}{2}$. Then the $d$-factor is actually constant over $V_2 \times V_3$ and its value is
 \[ d^2 - d^{-(\hk_2 + \hk_3 - 2\hj)} (\varepsilon_\hf\varepsilon_\hg \psi^2)(d)^{-1}) = d^2 - d^{2 + r} (\varepsilon_\hf\varepsilon_\hg \psi^2)(d)^{-1} = d^2\left(1 - d^r \tau\psi^{-1}(d)\right).\]
Note that the ``$p$-decency'' hypothesis implies that $\tau\psi^{-1}$ must be non-trivial if $r = 0$, so we can choose $d$ such that $d^2\left(1 - d^r \tau\psi^{-1}(d)\right) \ne 0$. Hence, we may take $C = d^{-2} \left(1 - d^r \tau\psi^{-1}(d)\right)^{-1} c_{\hf}^*(r)$. From the previous proposition, together with the explicit reciprocity law for Beilinson--Flach elements, both of the cohomology classes we are considering have the same image under the regulator; so they are equal by \cref{sel-van-diag}. Note that we need to assume the big image assumptions of \cite[\S 11]{KLZ17} to assure that the Selmer group with Greenberg condition is one dimensional.
\end{proof}

As before, note that using instead $\hat\kappa^*(f_\beta, \hg, \hh)$ the result takes the simpler form \[ \kappa^*(f_\beta, \hg, \hh) = C \cdot {}_d \kappa (\hg, \hh \times \psi). \]

\section{Deformation of Heegner points}\label{sect:heeg-def}

\subsection{Setup}

We consider the Heegner point anticyclotomic Euler system of \cite{JLZ}, and keep the notations of \cref{es2} and \cref{es3}. Let $f = E_{r+2}(\psi,\tau)$ stand for the Eisenstein series of weight $r+2$ and characters $(\psi,\tau)$, with $\psi \tau = 1$. As before, let $f_{\beta}$ be its critical-slope $p$-stabilisation. Consider the unique Coleman family $\hf$ passing through $f_{\beta}$ over some affinoid disc $V_1$. We continue assuming that for all integers $k \in V_1 \cap \ZZ_{\ge 0}$ with $k \ne r$, $f_k$ is a non-critical-slope cusp form. Recall for this section the module $V^{\ac}(\hf)^*$ defined in \cref{es3}, and consider in the same way $V^{c,\ac}(\hf)^*$, replacing $V(\hf)^*$ by $V^c(\hf)^*$.

\subsection{Families over punctured discs}


\begin{proposition}
The cohomology $H^1(K,V^{\ac}(\hf)^*)$ is a finitely-generated module over $\mathcal O(V_1 \times \mathcal W)$, and this module is $X$-torsion-free, where $X \in \mathcal O(V_1)$ is a uniformizer at $r$.
\end{proposition}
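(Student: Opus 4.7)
The plan is to follow the exact same template used in the analogous propositions of the Beilinson--Flach and diagonal-cycle sections. The strategy rests on the short exact sequence of $\cO(V_1 \times \cW)[G_K]$-modules
\[ 0 \to V^{\ac}(\hf)^* \xrightarrow{X} V^{\ac}(\hf)^* \to V^{\ac}(\hf)^*/X \to 0, \]
whose associated long exact sequence in continuous $G_K$-cohomology identifies the $X$-torsion of $H^1(K, V^{\ac}(\hf)^*)$ with a quotient of $H^0(K, V^{\ac}(\hf)^*/X)$. So the torsion-freeness reduces to showing that this $H^0$ vanishes. Finite generation over $\cO(V_1 \times \cW)$, meanwhile, is standard: it follows from the Noetherianity of $\cO(V_1 \times \cW)$ together with the finite-generation of continuous Galois cohomology of a lattice $T$ in $V^{\ac}(\hf)^*$ via, e.g., Nekov\'a\v{r}'s Selmer complex formalism, or by combining fibrewise finite-generation with an $X$-adic Nakayama argument.

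The core step is therefore the vanishing of $H^0(K, V^{\ac}(\hf)^*/X)$. By construction the fibre is
\[ V^{\ac}(\hf)^*/X \;\cong\; V(f_\beta)^*(-\tfrac{r}{2}) \mathop{\hat\otimes}_L \cO(\cW)(-\hj^{\ac}), \]
and the global filtration \eqref{ses-coh2} on $V(f_\beta)^*$ induces a two-step filtration whose graded pieces are each isomorphic to $\sigma \otimes_L \cO(\cW)(-\hj^{\ac})$ for some fixed $L$-valued character $\sigma$ of $G_K$ (either $\psi^{-1}(-\tfrac{r}{2})$ or $\tau^{-1}\eps^{1+r}(-\tfrac{r}{2})$ restricted to $G_K$). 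Since $\hj^{\ac}$ is the universal anticyclotomic character valued in $\cO(\cW)^\times$, no such $L$-valued character $\sigma$ can equal $\hj^{\ac}$; more concretely, any $G_K$-invariant vector would have to remain invariant after specialising $\hj^{\ac}$ to a Zariski-dense set of continuous characters $G_K \to L^\times$, which is impossible. Hence each graded piece has trivial $G_K$-invariants, and therefore so does $V^{\ac}(\hf)^*/X$.

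The only potential subtlety here is bookkeeping: one has to verify that the twist by $-\hk/2$ built into $V^{\ac}(\hf)^*$ does not interact with $\hj^{\ac}$ in a way that allows some graded piece to become trivial on $G_K$. This is ruled out by the assumption that $K \ne \QQ(\sqrt{-1}), \QQ(\sqrt{-3})$ (so $w_K = 2$ and the conventions of \cref{es3} make $\hj^{\ac}$ genuinely non-constant on $G_K$), and in any case the cyclotomic-versus-anticyclotomic directions are linearly independent in the character space of $G_K$. Modulo this check the argument is routine, and strictly parallel to the proofs already given in the Beilinson--Flach and diagonal-cycle cases.
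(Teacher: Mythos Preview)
Your proposal is correct and follows essentially the same approach as the paper: reduce $X$-torsion-freeness via the long exact sequence to the vanishing of $H^0(K, V^{\ac}(\hf)^*/X)$, and then rule out any $G_K$-invariant by noting that the universal anticyclotomic character $\hj^{\ac}$ cannot coincide with a fixed $L$-valued character. Your write-up is in fact more detailed than the paper's one-line proof (which just says ``this follows again via the exact sequence of cohomology from the vanishing of $H^0$''), and your explicit analysis of the graded pieces is a correct way to justify that vanishing.
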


\begin{proof}
This follows again via the exact sequence of cohomology from the vanishing of $H^0(\QQ,V^{\ac}(\hf)^*/X)$.
\end{proof}

\begin{theorem}
There exists a cohomology class \[ \kappa(\hf,K) \in H^1(K, \frac{1}{X} V^{c,\ac}(\hf)^*), \]
with the following interpolation property:
\begin{itemize}
\item If $(k,j)$ are integers $\ge 0$ with $k \ne r$, then we have
\[ \kappa(\hf,K)(k,j) = z_{f_k,r} \in H^1(K, V(f_k)^* \otimes \sigma^{k-j} \bar \sigma^{j}),\]
where the element $z_{f_k,r}$ is as defined in Theorem 5.3.1 of \cite{JLZ}.
   \end{itemize}
\end{theorem}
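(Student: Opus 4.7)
The plan is to adapt the constructions used earlier in the paper for Beilinson--Flach elements and diagonal cycles, where the analogous statement was proved by combining three ingredients: (a) the construction of the class over the punctured disc from the existing literature, (b) the torsion-freeness result just established, and (c) a careful analysis of the denominators introduced by the overconvergent projector at the critical-slope Eisenstein point.

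First, I would invoke the construction of \cite{JLZ}, which produces a Heegner class $z_{\hf|_{V_1'}, K}$ in $H^1(K, V^{\ac}(\hf|_{V_1'})^*)$ whenever $V_1' \subset V_1$ is an affinoid subdomain over which all classical specialisations of $\hf$ are non-critical cuspidal eigenforms. Applying this to $V_1' = V_1 \setminus \{r\}$ (or more precisely, to a family of affinoid subdomains exhausting the punctured disc), I obtain a class on the punctured disc whose specialisations at integer points $k \ne r$ are the Heegner classes $z_{f_k, r}$.

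The next step is to extend this class across the bad point $r$. As in the Beilinson--Flach and diagonal-cycle cases, the obstruction comes from the overconvergent projector $\Pr_{\hf}^{[r+1]}: \mathscr{D}'_{U-(r+1)} \otimes \operatorname{TSym}^{r+1} \to \tfrac{1}{\binom{\nabla}{r+1}} \mathscr{D}'_U$, whose denominator has a simple pole at the algebraic point of weight $r$; the residue factors through the kernel of specialisation, so on cohomology the map lands in $\tfrac{1}{X} V^{c,\ac}(\hf)^*$ rather than in $V^{\ac}(\hf)^*$ itself. Carrying out the construction of \cite{JLZ} verbatim, but recording that the Abel--Jacobi images of Heegner cycles actually lie in the image of this projector, one obtains a class in $H^1(K, \tfrac{1}{X} V^{c,\ac}(\hf)^*)$ whose restriction to the punctured disc recovers the class from \cite{JLZ}.

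Finally, I would use the torsion-freeness proposition proved immediately above: since $H^1(K, \tfrac{1}{X} V^{c,\ac}(\hf)^*)$ is $X$-torsion-free (as it sits inside a localisation of an $X$-torsion-free module), any two extensions of the class from the punctured disc differ by a class supported at $X = 0$, and hence agree. This gives uniqueness of $\kappa(\hf, K)$, and the interpolation property at $k \ne r$ is built into the construction. The main subtlety that needs care is the analogue of the ``balanced Selmer'' / Greenberg argument used in the diagonal-cycle case: here the anticyclotomic variable $\mathcal{W}$ plays the role previously played by the cyclotomic twist, and one must check that the construction of the $(\varphi, \Gamma)$-module submodule $\cF^+_p D(\hf)^*$ of \cite[Theorem 6.3.4]{JLZ} extends across the critical Eisenstein point; this is the step I expect to require the most technical work, but it follows the same template as the analogous results for $V(\hf)^*$ proved in Part A.
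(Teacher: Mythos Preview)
Your core approach is correct and matches the paper's own proof, which is simply ``This follows from the construction of \cite{JLZ}, with the usual changes to take into account what happens in a neighbourhood of a critical Eisenstein point.'' The paper's argument, like the analogous ones for Beilinson--Flach elements and diagonal cycles, amounts to observing that the construction in the cited reference goes through verbatim except that the overconvergent projector $\Pr_{\hf}^{[r+1]}$ acquires a simple pole at $k = r$, whence the class lands in $\tfrac{1}{X} V^{c,\ac}(\hf)^*$ rather than $V^{\ac}(\hf)^*$. Your paragraphs (a)--(c) capture exactly this.

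However, your final paragraph is misplaced. The ``balanced Selmer / Greenberg argument'' and the extension of $\cF^+_p D(\hf)^*$ across the critical point are \emph{not} needed for this existence theorem; they belong to the subsequent propositions on local properties at $p$ and leading terms at $X = 0$. The theorem as stated is purely an existence and interpolation statement, and requires only the pole analysis of the projector. (In fact, as the paper notes a few lines later, the Heegner case is actually \emph{simpler} than the Beilinson--Flach and diagonal-cycle cases precisely because no Selmer-vanishing input is required at the leading-term step; the relevant local condition is automatic from \cite[Proposition 6.3.2]{JLZ}.) Your uniqueness remark via torsion-freeness is correct but also not part of the statement being proved.
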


\begin{proof}
This follows from the construction of \cite{JLZ}, with the usual changes to take into account what happens in a neighbourhood of a critical-slope Eisenstein point.
\end{proof}

\subsection{Local properties at $p$}

Recall that the choice of the embedding $K \hookrightarrow \overline{\QQ_p}$ singles out one of the primes above $p$, that we have called $\fp$. The following result gives information about the vanishing of the local class at $\fp$.

\begin{proposition}
The image of $\loc_{\fp}(\kappa(\hf,K))$ in $H^1\left(K_{\fp}, \tfrac{1}{X}\cF^{-} D^c(\hf)^*\right)$ is zero.
\end{proposition}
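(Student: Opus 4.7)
My plan is to mirror closely the analogous arguments given earlier for the Beilinson--Flach and diagonal-cycle cases. The statement should follow from two ingredients: (i) the local Iwasawa cohomology of $\tfrac{1}{X}\cF^{-} D^c(\hf)^*$ over the anticyclotomic weight space is $X$-torsion-free; and (ii) the required vanishing already holds at a Zariski-dense set of specialisations of $V_1 \setminus \{r\}$, namely at those $(k,j)$ with $k \ne r$ and $k$ a classical weight.

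More precisely, the first step is to observe that if $V_1' \subset V_1 \setminus \{r\}$ is an affinoid subdomain, then on $V_1' \times \cW$ we have $V^{c,\ac}(\hf)^* = V^{\ac}(\hf)^*$, and the restriction of $\kappa(\hf,K)$ agrees with the Heegner class for the cuspidal Coleman family $\hf|_{V_1'}$ of \cite{JLZ}. By Theorem~B of \emph{op.\,cit.}\ (or rather its formulation recalled at the end of \cref{es3}), $\loc_{\fp}$ of this class factors through the anticyclotomic Iwasawa cohomology of $\cF^+_p D(\hf)^*$, so its image in $H^1(K_{\fp}, \cF^- D^c(\hf)^*) = H^1(K_{\fp}, \cF^- D(\hf)^*)$ vanishes identically on $V_1' \times \cW$.

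Second, I need $X$-torsion-freeness of $H^1(K_{\fp}, \tfrac{1}{X}\cF^{-} D^c(\hf)^*)$. This follows by the same strategy as in the analogous results of \cref{sect:BFdef}: the long exact sequence associated with $0 \to \cF^- D^c(\hf)^* \xrightarrow{X} \cF^- D^c(\hf)^* \to \cF^- D^c(f_\beta)^* \to 0$ and its variant after inverting $X$ reduces the claim to showing that $H^0(K_{\fp}, \cF^- D^c(f_\beta)^* \otimes \hj_K^{-1})$ vanishes generically, which is immediate from the description of $\cF^-D^c(f_\beta)^*$ as $\cR(\psi^{-1})$ (a non-trivial character of $G_{K_{\fp}}$ after twisting by the universal anticyclotomic character).

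Combining these two points: the image of $\loc_{\fp}(\kappa(\hf,K))$ in $H^1(K_{\fp}, \tfrac{1}{X}\cF^{-}D^c(\hf)^*)$ is a section which vanishes on the complement of the Zariski-closed subset $\{r\} \times \cW$. By $X$-torsion-freeness this section is zero, as required. The main (mild) obstacle is checking that the Panchishkin-type local condition used in \cite{JLZ} is indeed compatible with our $\tfrac{1}{X}\cF^{-} D^c$ formalism at the fibre $k = r$; but this is exactly parallel to the matching of $\cF^+ D^c(\hf)^* = X \cdot \cF^+ D(\hf)^*$ with $\cF^- D^c(\hf)^* = \cF^- D(\hf)^*$ established in the $p$-adic Hodge theory section, and so presents no new difficulty.
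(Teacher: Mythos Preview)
Your proposal is correct and follows essentially the same approach as the paper: the paper's proof is the one-sentence ``This follows from the fact that the Iwasawa cohomology is torsion-free, and the specialisations away from $X = 0$ have the required vanishing property,'' and you have simply unpacked both ingredients (vanishing of $H^0$ for torsion-freeness, and the $\cF^+$-factorisation from \cite{JLZ} for the classical specialisations) in more detail.
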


\begin{proof}
This follows from the fact that the Iwasawa cohomology is torsion-free, and the specialisations away from $X = 0$ have the required vanishing property.
\end{proof}

\subsection{Leading terms at $X = 0$}

\begin{proposition}
The image of $\kappa(\hf,K)$ in the cohomology of the quotient
\[ \frac{\tfrac{1}{X} V^{c,\ac}(\hf)^*}{V^{\ac}(\hf)^*} \cong K_{\fp} (\tau^{-1})(1+r) \otimes \mathcal H_{\Gamma^{\ac}}(-\hj) \] is zero.
\end{proposition}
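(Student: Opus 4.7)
My plan is to follow the pattern used for the analogous statements in the preceding two sections. First I would localize at $\fp$. The quotient
\[ \frac{\tfrac{1}{X}V^{c,\ac}(\hf)^*}{V^{\ac}(\hf)^*} \cong K_\fp(\tau^{-1})(1+r) \otimes \mathcal{H}_{\Gamma^\ac}(-\hj) \]
is, as a $G_{K_\fp}$-representation, identified (via the shift by $t^{r+1}$ described in the $p$-adic Hodge theory section) with a subquotient of the anticyclotomic twist of $\tfrac{1}{X}\cF^- D^c(\hf)^*$, since the $(\varphi,\Gamma)$-module of $V^c(f_\beta)^*_{\mathrm{quo}}$ sits inside $\cF^- D^c(f_\beta)^*$. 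The previous proposition already gives $\loc_\fp \kappa(\hf,K) = 0$ in $H^1\bigl(K_\fp, \tfrac{1}{X}\cF^- D^c(\hf)^*\bigr)$, so its image in the local cohomology of the quotient vanishes as well.

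Thus the image of $\kappa(\hf,K)$ in the global cohomology of the quotient lies in the kernel of $\loc_\fp$, and to conclude it suffices to prove that this kernel is zero. This is the elliptic-units analogue of \cref{prop:Selvanish}: writing $\Psi$ for the Gr\"ossencharacter such that our representation restricts, after trivialising the anticyclotomic variable, to $V_p(\Psi)^*(1-\hj_K)$, the anticyclotomic Iwasawa cohomology $H^1(K, V_p(\Psi)^*(1-\hj_K))^{\sim}$ is a finitely-generated torsion-free $\mathcal{H}_{\Gamma^\ac}$-module which, by \cref{es2}, contains the canonical Coates--Wiles elliptic-units class $\kappa(\Psi,K)$. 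Its localisation at $\fp$ is sent by the Coleman map $\operatorname{Col}_{\fp,\Psi}$ to Katz's $p$-adic $L$-function $L_\fp^{\mathrm{Katz}}(\Psi)$, which is not a zero divisor on the anticyclotomic line. Combined with the fact that the Iwasawa cohomology has generic rank one over $\mathcal{H}_{\Gamma^\ac}$, this forces $\loc_\fp$ to be injective on the full $H^1$, which gives the desired vanishing.

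The main obstacle is the rank-one assertion for the anticyclotomic Iwasawa cohomology; once one knows the generic rank is one, the non-vanishing of $L_\fp^{\mathrm{Katz}}(\Psi)$ together with torsion-freeness forces $\loc_\fp$ to be injective. The rank-one statement is a standard consequence of global Euler characteristic formulas for imaginary quadratic fields plus the weak Leopoldt conjecture in the anticyclotomic direction, but the precise set-up requires carefully tracking the identifications made in \cref{es2} and \cref{es3} (the sign conventions for characters, the twist $\hj^\ac$ versus $\hj$, and the shift by $(1+r)$ in the Hodge--Tate weight) to ensure the elliptic-units Euler system is applied to the correct character.
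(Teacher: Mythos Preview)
Your argument follows the template of the two preceding sections: show local vanishing at $\fp$, then kill the kernel of $\loc_\fp$ by a Selmer-vanishing argument, here supplied by elliptic units and the non-vanishing of Katz's $p$-adic $L$-function. This is not the paper's route. The paper's proof is a one-line appeal to the local property of Heegner classes from \cite[Proposition~6.3.2]{JLZ}, namely that $\loc_\fp \kappa(\hf,K)$ already factors through the cohomology of the rank-one submodule $\cF^+_\fp D(\hf)^* \subset D(\hf)^*$; and the remark immediately following the proof explicitly contrasts this with the earlier sections, stating that here the result ``is automatic and does not require any reciprocity law nor any result from the theory of $p$-adic $L$-functions.''

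The structural reason for the difference is that in the Beilinson--Flach and diagonal-cycle cases the quotient representation still carried a rank~$\ge 2$ factor (from $V(\hg)^*$, resp.\ $V(\hg)^*\otimes V(\hh)^*$), so the local property only cut the image down to a nontrivial Greenberg-type subspace, which then had to be killed globally. In the Heegner case the quotient is already one-dimensional, and the Heegner local condition at $\fp$ from \cite{JLZ} is the zero subspace; there is no residual Selmer group left to analyse.

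Your route via elliptic units is not incorrect in spirit, but it imports substantially more machinery than needed, and the obstacle you yourself flag --- proving that $\loc_\fp$ is injective on the full anticyclotomic Iwasawa $H^1$, which requires the rank-one statement plus non-vanishing of the anticyclotomic restriction of $L_\fp^{\mathrm{Katz}}$ --- is a genuine extra input that the paper's direct argument avoids entirely.
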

\begin{proof}
This follows from the local properties of Heegner points \cite[Proposition 6.3.2]{JLZ} (in particular, the fact that $\loc_{\fp} \kappa(\hf, K)$ factors through the anticyclotomic Iwasawa cohomology of the rank 1 submodule $\cF^+_p D(\hf)^* \subset D(\hf)^*$, as discussed in \cref{es3}).
\end{proof}

Note that this result on previous sections relied on the vanishing of particular Selmer groups (and the fact that certain Greenberg conditions were too strong). In this case, this is automatic and does not require any reciprocity law nor any result from the theory of $p$-adic $L$-functions.

\begin{corollary}
The class $\kappa(\hf,K)$ lifts (uniquely) to $H^1(K, V^{\ac}(\hf)^*)$, and thus has a well-defined image in the module \[ \hat{\kappa}(f_{\beta},K) \in H^1(K, K_{\fp}(\psi^{-1}) \otimes \mathcal H_{\Gamma^{\ac}}(-\hj)).\]
\end{corollary}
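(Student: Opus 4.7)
The plan is to deduce this corollary formally from the preceding proposition and a short diagram chase, in direct parallel with the Beilinson--Flach and diagonal-cycle cases already treated in the paper; no new input from reciprocity laws or Selmer-group bounds is required here.

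First I would invoke the long exact sequence of $G_K$-cohomology attached to the short exact sequence of $\cO(V_1\times\cW)[G_K]$-modules
\[
 0 \longrightarrow V^{\ac}(\hf)^* \longrightarrow \tfrac{1}{X} V^{c,\ac}(\hf)^* \longrightarrow Q \longrightarrow 0,
\]
where $Q \cong K_{\fp}(\tau^{-1})(1+r) \otimes \cH_{\Gamma^{\ac}}(-\hj)$ is the rank-one quotient identified in the previous proposition. Since that proposition tells us $\kappa(\hf, K)$ maps to zero in $H^1(K, Q)$, exactness produces a preimage $\tilde\kappa \in H^1(K, V^{\ac}(\hf)^*)$, proving existence of the lift.

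For uniqueness I need the connecting map $H^0(K, Q) \to H^1(K, V^{\ac}(\hf)^*)$ to vanish; it is enough to show $H^0(K, Q) = 0$. On $Q$ the group $G_K$ acts by the finite-order character $\tau^{-1}\eps^{1+r}$ twisted by the universal anticyclotomic character $\hj^{\ac}$, and the latter is non-trivial on every open subgroup of $G_K$ since it specialises to infinitely many distinct Gr\"ossencharacters. Hence no non-zero element of $Q$ can be $G_K$-fixed. (As a cross-check, one can instead use the already-established $X$-torsion-freeness of the $H^1$ in combination with Zariski-density of non-critical specialisations, where $V^c$ and $V$ coincide and the lift is automatic.)

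Finally, to produce the image $\hat\kappa(f_\beta, K)$, I would reduce $\tilde\kappa$ modulo $X$ to land in $H^1(K, V(f_\beta)^{*,\ac})$, and then push forward along the Galois-equivariant surjection $V(f_\beta)^* \twoheadrightarrow L(\psi^{-1})$ coming from the short exact sequence \eqref{ses-coh2}, tensored throughout with $\cH_{\Gamma^{\ac}}(-\hj)$. The only delicate point I foresee is keeping the roles of the characters $\psi^{-1}$ and $\tau^{-1}\eps^{1+r}$ straight: because the subrepresentations of $V(f_\beta)^*$ and $V^c(f_\beta)^*$ are swapped (as reviewed around \eqref{ses-coh2c}), one must verify that the two natural-looking reductions mod $X$ -- the one on $\tfrac1X V^{c,\ac}(\hf)^*/V^{\ac}(\hf)^*$ giving $\tau^{-1}$ and the one on $V^{\ac}(\hf)^*/X$ giving a filtration with $\psi^{-1}$ quotient -- are precisely the pieces produced by the chain of inclusions \eqref{chain}. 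Granted this, the image is well-defined, completing the argument.
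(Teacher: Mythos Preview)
Your proposal is correct and matches the paper's intended argument: the paper states this corollary without proof, as an immediate formal consequence of the preceding proposition (exactly parallel to the Beilinson--Flach and diagonal-cycle cases), and your long-exact-sequence diagram chase together with the vanishing of $H^0(K,Q)$ (or equivalently the $X$-torsion-freeness already established) is precisely the standard justification being left implicit.
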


\begin{proposition}
The image of $\hat{\kappa}(f_{\beta},K)$ in the above module is divisible by the logarithm distribution $\log^{[r+1]} \in \mathcal H_{\Gamma^{\ac}}$.
\end{proposition}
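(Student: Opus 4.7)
The plan is to follow the same template as the proofs of \cref{div-log} and its diagonal-cycle analogue. The first step is to specialise $\kappa(\hf, K)$ at points $(r, \chi)$, where $\chi$ ranges over the locally-algebraic characters of $\Gamma^{\ac}$ whose weight $j$ is one of the $r+1$ integers at which $\log^{[r+1]}$ vanishes. The goal is to show that each such specialisation is zero; Zariski density and the torsion-freeness of Iwasawa cohomology will then immediately yield the divisibility.

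As in the earlier proofs, the specialisation at such a point factors through the image of $\mathscr{D}_{U-j} \otimes \operatorname{TSym}^j$ inside $\mathscr{D}_{U-(r+1)} \otimes \operatorname{TSym}^{(r+1)}$, and on this image the overconvergent projector $\Pr_{\hf}^{[j]}$ coincides with $\Pr_{\hf}^{[r+1]}$ up to a nonzero scalar. Since $\Pr_{\hf}^{[j]}$ has no pole at $X = 0$ for $0 \le j \le r$, the specialisation $\kappa(\hf, K)(r, \chi)$ is well-defined, and it can be identified with the projection, onto the $(E_{r+2}^{\crit}(\psi, \tau), \chi)$-eigenspace, of the Abel--Jacobi class of the classical Heegner cycle for that eigensystem, regarded in classical \'etale cohomology.

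The key geometric input is then to show this projection vanishes. Heegner cycles are algebraic cycles on a Kuga--Sato variety fibered over $X_1(N)$, and they extend across the cuspidal boundary to a smooth compactification; consequently their Abel--Jacobi images lift from $H^1_c$ into non-compactly-supported cohomology. On the other hand, as recalled in the proposition preceding \eqref{ses-eis}, the natural map $H^1_c(\overline{Y_1(N)}, \mathscr{V}_r)[T = f] \to H^1(\overline{Y_1(N)}, \mathscr{V}_r)[T = f]$ is zero on Eisenstein eigenspaces, the two 1-dimensional pieces carrying the non-isomorphic Galois actions $\psi$ and $\tau\eps^{-1-r}$ respectively. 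Any class that lifts to $H^1$ must therefore have trivial image in the $\psi$-eigenspace of $H^1_c$ where the Heegner-cycle projection naturally lives, forcing the vanishing we need.

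I expect the geometric step to be the principal obstacle: for Beilinson--Flach and diagonal cycles it sufficed to compactify one factor of a product of open modular curves, whereas here one has to control the Abel--Jacobi map on a Kuga--Sato variety and check carefully that its extension across the cuspidal boundary is compatible with the Hecke-eigenspace decomposition used above. The remaining steps -- overconvergent projector compatibility, Zariski density, and torsion-freeness -- are essentially formal, being direct transcriptions of the arguments already used for the Beilinson--Flach and diagonal-cycle cases.
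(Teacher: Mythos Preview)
Your approach is correct and matches the paper's own proof, which simply says ``This follows the same argument of \cref{div-log}, replacing the cyclotomic algebra with the anticyclotomic one.'' You have correctly unpacked what that entails: specialise at the $r+1$ locally-algebraic anticyclotomic characters in the geometric range, use the compatibility of the overconvergent projectors $\Pr_{\hf}^{[j]}$ to identify the specialisations with projections of classical Heegner classes to the critical-Eisenstein eigenspace, and then invoke the geometric fact that these projections vanish.

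One small point of phrasing: your third paragraph is slightly tangled. The cleanest way to state the geometric step is that Heegner cycles are constructed on (generalised) Kuga--Sato varieties fibred over the \emph{compact} modular curve $X_1(N)$, so their Abel--Jacobi images lie in the image of $H^1(\overline{X_1(N)}, \mathscr{V}_r) = H^1_{\mathrm{par}}(\overline{Y}, \mathscr{V}_r)$ inside $H^1(\overline{Y}, \mathscr{V}_r)$; since the Eisenstein eigenspace in $H^1(\overline{Y}, \mathscr{V}_r)$ is disjoint from the parabolic part, the projection is zero. Your formulation in terms of ``lifting from $H^1_c$ to $H^1$'' and then having ``trivial image in the $\psi$-eigenspace of $H^1_c$'' reverses the direction of the maps and mixes up which eigenspace lives where. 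Your concern about the Kuga--Sato compactification is reasonable to flag, but it is not a genuine obstacle: the construction of Heegner cycles (already in \cite{JLZ} and the references there) is carried out over the compactified base from the outset.
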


\begin{proof}
This follows the same argument of \cref{div-log}, replacing the cyclotomic algebra with the anticyclotomic one.
\end{proof}

Since the Iwasawa cohomology is torsion-free, there is a unique class \[ \kappa(f_{\beta},K) \in H^1(K, K_{\fp}(\psi^{-1}) \otimes \mathcal H_{\Gamma^{\ac}}(-\hj)) \] such that \[ \hat{\kappa}(f_{\beta},K) = \log^{[r + 1]} \cdot \kappa(f_{\beta},K). \]


\subsection{The $p$-adic $L$-function}

Recall the anticyclotomic $p$-adic $L$-function $L_{\fp}^\mathrm{BDP}(\hf)$, that was introduced in \cref{es3} as a function over $V_1 \times \cW$. For this construction, the interpolation property also works at $k = r$, and the complex $L$-functions factors as \[ L(E_{r+2}(\psi,\tau)/K \times \chi_{\ac}^j,1) = L(\psi/K \times \sigma^{r+2+j} \bar{\sigma}^{-j},1) \cdot L(\tau/K \times \sigma^{j+1} \bar{\sigma}^{-j-r-1},1) = L(\psi/K \times \sigma^{r+2+j} \bar{\sigma}^{-j},1)^2. \] Note that we have used that $L(\tau/K \times \sigma^{j+1} \bar{\sigma}^{-j-r-1},1)=L(\psi/K \times \sigma^{r+2+j} \bar{\sigma}^{-j},1)$, which follows from the functional equation together with the condition that $\psi \tau = 1$. This automatically gives an equality of $p$-adic $L$-functions \[ L_{\fp}^\mathrm{BDP}(E_{r+2}(\psi,\tau))(\chi_{\ac}^j) = L_{\fp}^\mathrm{Katz}(\psi)(\sigma^{r+2+j} \bar{\sigma}^{-j}). \] (Alternatively, it directly follows from the construction of \cite{BDP13} that both functions agree.)

\subsection{Perrin-Riou maps}

We want to relate the $p$-adic $L$-function $L_{\fp}^\mathrm{BDP}(\hf)$ to the image of $\text{loc}_{\fp}(\kappa(\hf,K))$. This factors through the natural map \[ H^1(K_{\fp}, D^{\ac}(\hf)^*) \rightarrow H^1(K_{\fp}, \mathcal F^+ D^{\ac}(\hf)^*). \]
Perrin-Riou's regulator gives a map
\[
  \operatorname{Col}_{\mathbf{b}_{\hf}^+} = \left\langle \mathcal{L}^{\mathrm{PR}}_{\cF^+V(\hf)^*}(-), \mathbf{b}_{\hf}^+ \right\rangle :
  H^1(K_{\fp}, \cF^{+} D^{\ac}(\hf)^*) \to \cO(V_1 \times \cW)
\]
which interpolates the Perrin-Riou regulators for $f_k$. More precisely, for $z \in \left(\cF^{+} D^{\ac}(\hf)^*\right)^{\varphi^{-1} = 1}$, this map sends $z$ to
 \[ \langle\iota((1 - \varphi) z), \mathbf{b}_{\hf}^+ \rangle, \]
 where $\iota$ is the inclusion
 \[
  \left(\cF^{+} D^{\ac}(\hf)^*\right)^{\varphi^{-1} = 0} \into \left(\cF^{+} D^{\ac}(\hf)^*[1/t]\right)^{\varphi^{-1} = 0}
  = \Dcris\left(\cF^{+}D^{\ac}(\hf)^*(-1-\hk)\right)\otimes \cH_{\Gamma^{\ac}}.
 \]

Since multiplication by $t^{r+1}$ corresponds to multiplication by $\log^{[r + 1]}$ on the $\cH_{\Gamma^{\ac}}$ side, we conclude that
 \[ \operatorname{Col}_{\mathbf{b}_{\hf}^+}(\kappa(\hf,K)) \bmod X = c(\hk) \left\langle \mathcal{L}^{\mathrm{PR}}_{\psi^{-1}} (\kappa(\hf,K)), \eta_{f_k}^{\alpha}\right\rangle.
 \]

\begin{theorem}
We have
\[ \operatorname{Col}_{\mathbf{b}_{\hf}^+}(\kappa(\hf,K)) = c(\hk) \cdot L_{\fp}^\mathrm{BDP}(\hf), \]
where $c(\hk)$ is a meromorphic function on $V_1$ alone, regular and non-vanishing at all integer weights $k \ge -1$ except possibly at $k=r$ itself, where it is regular.
 \end{theorem}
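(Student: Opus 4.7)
The approach is to mirror the template already used in the Beilinson--Flach and diagonal-cycle settings of the previous sections. The task is to show that
\[ c(\hk) := \frac{\operatorname{Col}_{\mathbf{b}_{\hf}^+}(\kappa(\hf,K))}{L_{\fp}^{\mathrm{BDP}}(\hf)} \]
is a meromorphic function on $V_1$ depending only on the weight variable, non-vanishing at every classical weight $\ne r$, and regular at $\hk = r$.

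First I would specialise at an integer point $k \in V_1 \cap \ZZ_{\ge -1}$ with $k \ne r$. By our running hypothesis, $f_k$ is a non-critical-slope cuspidal eigenform, so the explicit reciprocity law of \cite[Theorem B]{JLZ} applies and gives, throughout the anticyclotomic direction $\cW$,
\[ \operatorname{Col}_{\omega_{f_k}}\bigl(\loc_{\fp}\kappa(f_k,K)\bigr) = (-1)^{k/2 + \hj}\, L_{\fp}^{\mathrm{BDP}}(f_k). \]
On the other hand, by the interpolation property of $\mathbf{b}_{\hf}^+$ recorded in \cref{sect:dRcoh}, its specialisation at such a $k$ is $c_{f_k}\,\omega_{f_k}$ for a nonzero scalar $c_{f_k} \in L^\times$ depending only on $k$. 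Combining these, the ratio defining $c$ is identically equal to $\pm c_{f_k}$ along the slice $\{k\}\times\cW$. Since such slices accumulate at $k = r$ and $H^1(K, V^{\ac}(\hf)^*)$ is $X$-torsion-free, Zariski density in the $\hk$-direction forces $c$ to be a function of $\hk$ alone, with the stated nonzero classical values.

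Next I would verify regularity at $\hk = r$. For this it suffices to show that $L_{\fp}^{\mathrm{BDP}}(\hf)$ is not identically zero along $\{r\}\times\cW$, since a pole of $c$ at $r$ would then force the regulator side to vanish identically on a Zariski-dense subset, contradicting the fact that $\kappa(\hf,K)$ is itself nontrivial in a neighbourhood of $\hk = r$. Non-vanishing of the restriction to $\{r\}\times\cW$ is exactly what the factorisation
\[ L_{\fp}^{\mathrm{BDP}}\bigl(E_{r+2}(\psi,\tau)\bigr)(\chi_{\ac}^j) = L_{\fp}^{\mathrm{Katz}}(\psi)\bigl(\sigma^{r+2+j}\bar{\sigma}^{-j}\bigr) \]
established in the previous subsection reduces to, and the right-hand side is a non-zero rigid analytic function of $j$ by the standard non-vanishing properties of Katz's two-variable $p$-adic $L$-function on the relevant component of weight space.

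The only point I would verify with real care -- and thus the principal obstacle -- is the compatibility between the family class $\kappa(\hf,K) \in H^1(K,\tfrac{1}{X}V^{c,\ac}(\hf)^*)$ and the individual classical Heegner classes $z_{f_k,r}$ of \cite{JLZ} \emph{uniformly in the anticyclotomic variable $\hj$}; this is what legitimises pulling the reciprocity law out of the family along a Zariski-dense set of classical slices. Once this bookkeeping is in place, the argument follows verbatim the proofs of the analogous statements for Beilinson--Flach and diagonal-cycle classes given earlier.
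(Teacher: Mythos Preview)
Your proposal is correct and follows essentially the same route as the paper: specialise at classical weights $k\ne r$, invoke the explicit reciprocity law for Heegner classes from \cite{JLZ} together with the relation $\mathbf{b}^+_{\hf}|_{k}=c_{f_k}\omega_{f_k}$ to identify the ratio with the nonzero constant $c_{f_k}$, and then use non-vanishing of $L_{\fp}^{\mathrm{BDP}}(\hf)$ along $\{r\}\times\cW$ (via the factorisation into the Katz $p$-adic $L$-function) to rule out a pole at $k=r$. The ``principal obstacle'' you flag is already handled by the interpolation theorem for $\kappa(\hf,K)$ stated just before, so no extra work is needed there.
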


\begin{proof}
It follows from the reciprocity laws for Heegner points that the quotient $ \operatorname{Col}_{\mathbf{b}_{\hf}^+}(\kappa(\hf,K)) / L_p(\hf, K)$ is a function of $\hk$ alone, and this ratio does not vanish at any integer $k \ge -1$ where $f_k$ is classical; it is equal to the fudge-factor $c_k$ defined above. Since $L_{\fp}^\mathrm{BDP}(\hf)$ is well-defined and non-zero along $\{ r \} \times \cW$, we conclude that $c(\hk)$ does not have a pole at $\hk$.
\end{proof}

\subsection{Leading terms}
If $c(r) \ne 0$, then we have thus constructed a class in Iwasawa cohomology of $V(\psi)^*$ whose regulator agrees with the Euler system of elliptic units. If $c(r) = 0$, then in fact $\kappa(\hf,K)$ is divisible by $X$, so we can divide out a factor of $X$ from both $\kappa(\hf,K)$ and $c(\hk)$, and repeat the argument. Since $c$ is not identically 0 this must terminate after finitely many steps.

\begin{proposition}
  Let $n \ge 0$ be the order of vanishing of $c_{\hf}$ at $k = r$. Then $X^{-n} \hat \kappa(\hf, K)$ is well-defined and non-zero modulo $X$; and this leading term projects non-trivially into the quotient $ H^1(\QQ, K_{\fp}(\psi^{-1}) \otimes \cH_{\Gamma^{\ac}})$. Its image under the Perrin-Riou regulator is given by
  \[ c^{*}_{\hf}(r) \cdot \log^{[r+1]} \cdot L_{\fp}^\mathrm{BDP}(E_{r + 2}(\psi, \tau)), \]
  where $c^{*}_{\hf}(r) \in L^\times$.
 \end{proposition}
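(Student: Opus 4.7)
The plan is to mirror almost verbatim the leading-term arguments carried out for Beilinson--Flach elements and diagonal cycles in the preceding two sections, proceeding by induction on the order of vanishing $n$ of $c_\hf$ at $k = r$.

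In the base case $n = 0$, one has $c_\hf(r) \ne 0$, so it suffices to reduce the identity $\operatorname{Col}_{\mathbf{b}^+_\hf}\bigl(\kappa(\hf, K)\bigr) = c_\hf(\hk) \cdot L_\fp^{\mathrm{BDP}}(\hf)$ of the previous theorem modulo $X$, then expand via the comparison $\mathbf{b}^+_{f_\beta} = c_r\, t^{r+1}\, \eta^\alpha_{f_r} \otimes e_{-(r+1)}$ recorded in \cref{sect:dRcoh}. Under Perrin-Riou's regulator, multiplication by $t^{r+1}$ on the $(\varphi,\Gamma)$-module side corresponds to multiplication by the distribution $\log^{[r+1]}$ on the anticyclotomic side, producing the advertised formula with $c^*_\hf(r)$ equal, up to the universal constant $c_r \in L^\times$, to the non-zero value $c_\hf(r)$.

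For the inductive step, I would show that if $c_\hf(r) = 0$ then $\kappa(\hf, K)$ is itself divisible by $X$. By the preceding local proposition, $\loc_\fp \kappa(\hf, K)$ takes values in the anticyclotomic Iwasawa cohomology of the rank-one submodule $\cF^+ D^{\ac}(\hf)^*$, on which the Perrin-Riou regulator is injective modulo torsion; combined with the $\cO(\cW)$-torsion-freeness of that Iwasawa cohomology, this yields outright injectivity, and so the vanishing of the right-hand side $c_\hf(\hk)\cdot L_\fp^{\mathrm{BDP}}(\hf)$ modulo $X$ forces $\loc_\fp \kappa(f_\beta, K) = 0$. To upgrade this local vanishing to a global one, I would invoke that the ambient module $H^1(K, K_\fp(\psi^{-1}) \otimes \cH_{\Gamma^{\ac}}(-\hj))$ is essentially a rank-one torsion-free $\cH_{\Gamma^{\ac}}$-module generated (up to reflexive closure) by the elliptic-unit class $\kappa(\psi, K)$, whose $\fp$-localisation maps to Katz's non-zero $p$-adic $L$-function $L_\fp^{\mathrm{Katz}}(\psi)$ under the Coates--Wiles reciprocity law; consequently $\loc_\fp$ is injective on this module, so $\kappa(f_\beta, K) = 0$. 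Therefore $X \mid \kappa(\hf, K)$; replacing $\kappa(\hf, K)$ and $c_\hf(\hk)$ by $X^{-1} \kappa(\hf, K)$ and $X^{-1} c_\hf(\hk)$ respectively and iterating, the procedure terminates after exactly $n$ steps, at which point the base-case calculation applies to $X^{-n}\kappa(\hf, K)$ and yields the leading-term formula.

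The main obstacle I anticipate is making the injectivity of the Perrin-Riou regulator precise at the critical fibre, where the $(\varphi, \Gamma)$-module $\cF^+ D^{\ac}(f_\beta)^*$ is no longer \'etale; this requires invoking the generalisation of Perrin-Riou's theory to arbitrary $(\varphi,\Gamma)$-modules over the Robba ring developed in \cite{KPX}. In contrast with the Beilinson--Flach and diagonal-cycle cases, no Selmer-group vanishing statement is needed here, since the classical theory of elliptic units directly controls the target module; once the injectivity point is granted, the remainder of the argument is entirely formal.
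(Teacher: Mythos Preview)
Your approach is essentially the same as the paper's, which is extremely terse here: the paper simply asserts ``if $c(r)=0$, then in fact $\kappa(\hf,K)$ is divisible by $X$, so we can divide out a factor of $X$ \dots and repeat,'' and states the proposition without further proof. Your account is in fact more detailed than the paper's, particularly in spelling out why the Perrin--Riou regulator on $\cF^+ D^{\ac}(f_\beta)^*$ is injective and why injectivity of $\loc_\fp$ on the global module follows from the Coates--Wiles reciprocity law for elliptic units.

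One small point to tighten: your inductive step only shows that the image of $\kappa(\hf,K)\bmod X$ in the $\psi^{-1}$-\emph{quotient} of $V^{\ac}(f_\beta)^*$ vanishes. To conclude that the full class is divisible by $X$ in $H^1(K,V^{\ac}(\hf)^*)$, you must also show that the resulting lift to the $\tau^{-1}(1+r)$-\emph{sub} vanishes. But this is automatic by the same mechanism as the earlier Proposition showing the image in $\tfrac{1}{X}V^{c,\ac}(\hf)^*/V^{\ac}(\hf)^*\cong K_\fp(\tau^{-1})(1+r)\otimes\cH_{\Gamma^{\ac}}(-\hj)$ is zero: the local condition $\loc_\fp\kappa(\hf,K)\in H^1(K_\fp,\cF^+D^{\ac}(\hf)^*)$ persists after dividing by $X$, and you have just argued that this local class vanishes, so the same elliptic-units injectivity argument (now for the character $\tau^{-1}$ rather than $\psi^{-1}$, which coincide since $\psi\tau=1$) kills the global class in the sub as well. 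The paper does not spell this out either; once noted, the iteration proceeds exactly as you describe.
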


 We denote the resulting class by $\hat\kappa^*(f_\beta, K)$. If $n = 0$, we have seen above that this class is divisible by $\log^{[r+1]}$; for $n > 0$ this is less obvious, but it follows from the proof of the next theorem:

\begin{theorem}
We have
\[ \hat\kappa^*(f_\beta, K) = \left(C \cdot \log^{[r+1]} \cdot (-1)^{\frac{r}{2}+\hj} \right) \cdot \kappa(\psi,K)(\sigma^{-1-r-\hj} \bar{\sigma}^{1+\hj}), \]
for some nonzero constant $C$, and where $\kappa(\psi,K)(\sigma^{-1-r-\hj} \bar{\sigma}^{1+\hj})$ is the specialization at $\sigma^{-1-r-\hj} \bar{\sigma}^{1+\hj}$ of the system of elliptic units defined in \cref{es2}.
 \end{theorem}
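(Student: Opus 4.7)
The plan is to follow the same template already deployed for Beilinson--Flach elements and diagonal cycles: both sides of the asserted identity live in the module
\[ H^1(K, K_{\fp}(\psi^{-1}) \otimes \cH_{\Gamma^{\ac}}(-\hj)), \]
and the comparison reduces to a regulator computation plus a Selmer-type rigidity argument. The universal character $\operatorname{Col}_{\mathbf{b}_{\hf}^+}$ constructed above interpolates the Perrin-Riou regulator for the unramified one-dimensional piece at $\fp$; by the torsion-freeness of Iwasawa cohomology and the injectivity of the anticyclotomic Perrin-Riou map on $H^1(K_\fp, K_\fp(\psi^{-1}) \otimes \cH_{\Gamma^{\ac}}(-\hj))$, it suffices to show that the Coleman images of both classes agree, up to a Selmer-type bound that we discuss at the end.

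First, I would identify the Coleman image of the left-hand side. The preceding proposition (on leading terms) gives directly
\[ \operatorname{Col}_{\mathbf{b}_{\hf}^+}\!\bigl(\hat\kappa^*(f_\beta, K)\bigr) \;=\; c^{*}_{\hf}(r) \cdot \log^{[r+1]} \cdot L_{\fp}^{\mathrm{BDP}}\!\bigl(E_{r+2}(\psi,\tau)\bigr). \]
Second, I would compute the Coleman image of the right-hand side: the Coates--Wiles explicit reciprocity law recalled in \cref{es2} yields
\[ \operatorname{Col}_{\fp, \psi}\!\bigl(\loc_{\fp}\kappa(\psi, K)\bigr) \;=\; L_{\fp}^{\mathrm{Katz}}(\psi), \]
which, after specialising at the anticyclotomic character $\sigma^{-1-r-\hj}\bar{\sigma}^{1+\hj}$ and using the factorisation identity
\[ L_{\fp}^{\mathrm{BDP}}\!\bigl(E_{r+2}(\psi,\tau)\bigr)(\chi_{\ac}^{j}) \;=\; L_{\fp}^{\mathrm{Katz}}(\psi)\!\bigl(\sigma^{r+2+j}\bar{\sigma}^{-j}\bigr) \]
established just above, produces the same object up to the explicit sign $(-1)^{r/2+\hj}$ (inherited from the sign in the Heegner-class reciprocity law of \cref{es3}). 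Setting $C = c^{*}_{\hf}(r)$ then gives equality of the Coleman images.

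The step I expect to be the main obstacle is passing from equality of regulators to equality of global cohomology classes. In the two preceding theorems this was provided by \cref{prop:Selvanish} and \cref{sel-van-diag}, whose proofs invoked Kato's Euler system to bound a Greenberg Selmer group. In the present imaginary-quadratic setting the analogous input is a Selmer-vanishing statement for the one-dimensional representation $K_\fp(\psi^{-1})$ twisted by the universal anticyclotomic character, which is essentially equivalent to the divisibility ``characteristic ideal $\subset$ Katz $p$-adic $L$-function'' in the anticyclotomic main conjecture, a theorem of Rubin under our class-number-one and splitness hypotheses. Granting this Selmer vanishing, the difference of the two candidate classes has trivial localisation at $\fp$ and lies in the relevant Selmer group, hence vanishes, completing the proof.
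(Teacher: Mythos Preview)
Your proposal is correct and follows essentially the same route as the paper: compute the Coleman image of $\hat\kappa^*(f_\beta,K)$ via the leading-term proposition, compute the Coleman image of the elliptic-unit class via the Coates--Wiles reciprocity law, match them using the factorisation $L_{\fp}^{\mathrm{BDP}}(E_{r+2}(\psi,\tau)) = L_{\fp}^{\mathrm{Katz}}(\psi)$ and the sign from the Heegner reciprocity law, and set $C = c^*_{\hf}(r)$.

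The only divergence is at the very last step. The paper's proof is extremely terse here: it simply asserts that equality of images under the regulator forces equality of the global classes, without invoking any auxiliary Selmer-vanishing statement. You are more scrupulous, flagging that one needs the kernel of $\loc_\fp$ on $H^1(K, K_\fp(\psi^{-1})\otimes\cH_{\Gamma^{\ac}}(-\hj))$ to be trivial, and proposing to obtain this from Rubin's anticyclotomic main conjecture. Your caution is well placed, and indeed your argument is more complete than the paper's; the paper is presumably treating this as routine in the rank-one imaginary-quadratic setting (the Coleman map is an isomorphism on the local $H^1$ at $\fp$, and the relevant Selmer group---classes unramified everywhere and trivial at $\fp$---vanishes by standard Iwasawa theory over $K$), but it does not spell this out. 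So: same approach, with you filling in a step that the paper elides.
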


\begin{proof}
Take as before $C = c_{\hf}^*(r)$. This follows from the previous proposition, together with the explicit reciprocity law for elliptic units, that both of the cohomology classes we are considering have the same image under the regulator, and so they must be equal.
 \end{proof}

\let\MR\undefined
\newlength{\bibitemsep}
\setlength{\bibitemsep}{0.75ex plus 0.05ex minus 0.05ex}
\newlength{\bibparskip}
\setlength{\bibparskip}{0pt}
\let\oldthebibliography\thebibliography
\renewcommand\thebibliography[1]{%
 \oldthebibliography{#1}%
 \setlength{\parskip}{\bibparskip}%
 \setlength{\itemsep}{\bibitemsep}%
}

 \bibliographystyle{../amsalphaurl}
 \bibliography{../references}

\newcommand{\etalchar}[1]{$^{#1}$}
\providecommand{\bysame}{\leavevmode\hbox to3em{\hrulefill}\thinspace}
\providecommand{\MR}[1]{%
 MR \href{http://www.ams.org/mathscinet-getitem?mr=#1}{#1}.
}
\providecommand{\href}[2]{#2}
\newcommand{\articlehref}[2]{\href{#1}{#2}}
\begin{thebibliography}{BCD{\etalchar{+}}14}

\bibitem[AI21]{andreattaiovita21}
\textsc{F.~Andreatta} and \textsc{A.~Iovita},
  \articlehref{http://doi.org/10.1215/00127094-2020-0076}{\emph{Triple product
  {$p$}-adic {$L$}-functions associated to finite slope {$p$}-adic families of
  modular forms}}, Duke Math. J. \textbf{170} (2021), no.~9, 1989--2083.
  \MR{4278669}

\bibitem[AIS15]{andreattaiovitastevens}
\textsc{F.~Andreatta, A.~Iovita,} and \textsc{G.~Stevens},
  \articlehref{http://doi.org/10.1017/S1474748013000364}{\emph{Overconvergent
  {E}ichler--{S}himura isomorphisms}}, J. Inst. Math. Jussieu \textbf{14}
  (2015), no.~2, 221--274. \MR{3315057}

\bibitem[Bel12]{bellaiche11a}
\textsc{J.~Bella{\"i}che},
  \articlehref{http://doi.org/10.1007/s00222-011-0358-z}{\emph{Critical
  {$p$}-adic {$L$}-functions}}, Invent. Math. \textbf{189} (2012), no.~1,
  1--60. \MR{2929082}

\bibitem[BC06]{bellaichechenevier06}
\textsc{J.~Bella{\"i}che} and \textsc{G.~Chenevier},
  \articlehref{http://doi.org/10.1017/S1474748006000028}{\emph{Lissit\'{e} de
  la courbe de {H}ecke de {$\operatorname{GL}_2$} aux points {E}isenstein
  critiques}}, J. Inst. Math. Jussieu \textbf{5} (2006), no.~2, 333--349.
  \MR{2225045}

\bibitem[BD15]{bellaichedasgupta15}
\textsc{J.~Bella\"{\i}che} and \textsc{S.~Dasgupta},
  \articlehref{http://doi.org/10.1112/S0010437X1400788X}{\emph{The {$p$}-adic
  {$L$}-functions of evil {E}isenstein series}}, Compos. Math. \textbf{151}
  (2015), no.~6, 999--1040. \MR{3357177}

\bibitem[BCD{\etalchar{+}}14]{BCDDPR}
\textsc{M.~Bertolini, F.~Castella, H.~Darmon, S.~Dasgupta, K.~Prasanna,} and
  \textsc{V.~Rotger},
  \articlehref{http://doi.org/10.1017/CBO9781107446335.004}{\emph{{$p$}-adic
  {$L$}-functions and {E}uler systems: a tale in two trilogies}}, Automorphic
  forms and {G}alois representations. {V}ol. 1, London Math. Soc. Lecture
  Notes, vol. 414, Cambridge Univ. Press, 2014, pp.~52--101. \MR{3444223}

\bibitem[BD14]{bertolinidarmon14}
\textsc{M.~Bertolini} and \textsc{H.~Darmon},
  \articlehref{http://doi.org/10.1007/s11856-013-0047-2}{\emph{Kato's {E}uler
  system and rational points on elliptic curves {I}: a p-adic {B}eilinson
  formula.}}, Israel J. Math. \textbf{199} (2014), no.~1, 163--188.
  \MR{3219532}

\bibitem[BDP12]{BDP12}
\textsc{M.~Bertolini, H.~Darmon,} and \textsc{K.~Prasanna},
  \articlehref{http://doi.org/10.2140/pjm.2012.260.261}{\emph{{$p$}-adic
  {R}ankin {$L$}-series and rational points on {CM} elliptic curves}}, Pacific
  J. Math. \textbf{260} (2012), no.~2, 261--303. \MR{3001796}

\bibitem[BDP13]{BDP13}
\bysame,
  \articlehref{http://doi.org/10.1215/00127094-2142056}{\emph{Generalized
  {H}eegner cycles and {$p$}-adic {R}ankin {$L$}-series}}, Duke Math. J.
  \textbf{162} (2013), no.~6, 1033--1148, with an appendix by Brian Conrad.
  \MR{3053566}

\bibitem[BDV22]{BDV}
\textsc{M.~Bertolini, H.~Darmon,} and \textsc{R.~Venerucci},
  \articlehref{https://doi.org/10.1016/j.aim.2021.108172}{\emph{Heegner points
  and {B}eilinson-{K}ato elements: a conjecture of {P}errin-{R}iou}}, Adv.
  Math. \textbf{398} (2022), Paper No. 108172, 50. \MR{4379203}

\bibitem[BSV24]{BSV1-erratum}
\textsc{M.~Bertolini, M.~Seveso,} and \textsc{R.~Venerucci},
  \articlehref{http://www.esaga.uni-due.de/f/massimo.bertolini/publications/Errata-AstBSV1.pdf}{\emph{Erratum
  to reciprocity laws for balanced diagonal classes}}, preprint, 2024.

\bibitem[BSV22]{BSV1}
\textsc{M.~Bertolini, M.~A. Seveso,} and \textsc{R.~Venerucci},
  \emph{Reciprocity laws for balanced diagonal classes}, Ast\'{e}risque (2022),
  no.~434, 77--174.

\bibitem[BPS18]{buyukboduk-pollack-sasaki}
\textsc{K.~Büyükboduk, R.~Pollack,} and \textsc{S.~Sasaki},
  \articlehref{http://arxiv.org/abs/1811.08216}{\emph{$p$-adic
  {G}ross--{Z}agier formula at critical slope and a conjecture of
  {P}errin-{R}iou}}, 2018, preprint, \path{arXiv:1811.08216}.

\bibitem[Cas20]{castella-variation}
\textsc{F.~Castella},
  \articlehref{http://doi.org/10.1017/s1474748019000094}{\emph{On the
  {$p$}-adic variation of {H}eegner points}}, J. Inst. Math. Jussieu
  \textbf{19} (2020), no.~6, 2127--2164. \MR{4167004}

\bibitem[CW77]{coateswiles77}
\textsc{J.~Coates} and \textsc{A.~Wiles},
  \articlehref{http://doi.org/10.1007/BF01402975}{\emph{On the conjecture of
  {B}irch and {S}winnerton-{D}yer}}, Invent. Math. \textbf{39} (1977), no.~3,
  223--251. \MR{0463176}

\bibitem[DR14]{darmonrotger14}
\textsc{H.~Darmon} and \textsc{V.~Rotger},
  \articlehref{http://doi.org/10.24033/asens.2227}{\emph{Diagonal cycles and
  {E}uler systems {I}: a {$p$}-adic {G}ross--{Z}agier formula}}, Ann. Sci.
  \'Ecole Norm. Sup. (4) \textbf{47} (2014), no.~4, 779--832. \MR{3250064}

\bibitem[DR18]{DR3}
\bysame, \emph{Stark--{H}eegner points and generalised {K}ato classes},
  preprint, 2018.

\bibitem[Han15]{Hansen-Iwasawa}
\textsc{D.~Hansen},
  \articlehref{http://www.math.columbia.edu/~hansen/bigzetaone.pdf}{\emph{Iwasawa
  theory of overconvergent modular forms}}, preprint, 2015.

\bibitem[Hid16]{hida-adjoint}
\textsc{H.~Hida}, \emph{Arithmetic of adjoint {$L$}-values}, {$p$}-adic aspects
  of modular forms, World Sci. Publ., Hackensack, NJ, 2016, pp.~185--236.
  \MR{3587958}

\bibitem[JLZ21]{JLZ}
\textsc{D.~Jetchev, D.~Loeffler,} and \textsc{S.~L. Zerbes},
  \articlehref{http://doi.org/10.1112/plms.12363}{\emph{Heegner points in
  {C}oleman families}}, Proc. Lond. Math. Soc. \textbf{122} (2021), no.~1,
  124--152. \MR{4210260}

\bibitem[Kat04]{kato04}
\textsc{K.~Kato},
  \articlehref{http://smf4.emath.fr/en/Publications/Asterisque/2004/295/html/smf_ast_295_117-290.html}{\emph{{$P$}-adic
  {H}odge theory and values of zeta functions of modular forms}},
  Ast{\'e}risque \textbf{295} (2004), 117--290, Cohomologies $p$-adiques et
  applications arithm{\'e}tiques. III. \MR{2104361}

\bibitem[Kat76]{katz76}
\textsc{N.~M. Katz},
  \articlehref{http://doi.org/10.2307/1970966}{\emph{{$P$}-adic interpolation
  of real analytic {E}isenstein series}}, Ann. of Math. (2) \textbf{104}
  (1976), no.~3, 459--571. \MR{0506271}

\bibitem[KPX14]{KPX}
\textsc{K.~S. Kedlaya, J.~Pottharst,} and \textsc{L.~Xiao},
  \articlehref{http://doi.org/10.1090/S0894-0347-2014-00794-3}{\emph{Cohomology
  of arithmetic families of {$(\varphi,\Gamma)$}-modules}}, J. Amer. Math. Soc.
  \textbf{27} (2014), no.~4, 1043--1115. \MR{3230818}

\bibitem[KLZ17]{KLZ17}
\textsc{G.~Kings, D.~Loeffler,} and \textsc{S.~L. Zerbes},
  \articlehref{http://doi.org/10.4310/CJM.2017.v5.n1.a1}{\emph{{R}ankin--{E}isenstein
  classes and explicit reciprocity laws}}, Cambridge J. Math. \textbf{5}
  (2017), no.~1, 1--122. \MR{3637653}

\bibitem[KLZ20]{KLZ20}
\bysame,
  \articlehref{http://doi.org/10.1353/ajm.2020.0002}{\emph{{R}ankin--{E}isenstein
  classes for modular forms}}, Amer. J. Math. \textbf{142} (2020), no.~1,
  79--138.

\bibitem[Kit94]{kitagawa}
\textsc{K.~Kitagawa},
  \articlehref{http://doi.org/10.1090/conm/165/01611}{\emph{On standard
  {$p$}-adic {$L$}-functions of families of elliptic cusp forms}}, {$p$}-adic
  monodromy and the {B}irch and {S}winnerton-{D}yer conjecture ({B}oston, {MA},
  1991), Contemp. Math., vol. 165, Amer. Math. Soc., Providence, RI, 1994,
  pp.~81--110. \MR{1279604}

\bibitem[LSZ20]{LSZ-asai}
\textsc{D.~Loeffler, C.~Skinner,} and \textsc{S.~L. Zerbes},
  \articlehref{http://doi.org/10.2969/aspm/08610595}{\emph{Syntomic regulators
  of {A}sai--{F}lach classes}}, Development of {I}wasawa Theory -- the
  centennial of {K.~I}wasawa's birth, Adv. Stud. Pure Math., vol.~86, Math.
  Soc. Japan, 2020, pp.~595--638.

\bibitem[LVZ15]{LVZ}
\textsc{D.~Loeffler, O.~Venjakob,} and \textsc{S.~L. Zerbes},
  \articlehref{http://doi.org/10.1215/21562261-2848124}{\emph{Local epsilon
  isomorphisms}}, Kyoto J. Math. \textbf{55} (2015), no.~1, 63--127.
  \MR{3323528}

\bibitem[LZ12]{loefflerzerbes12}
\textsc{D.~Loeffler} and \textsc{S.~L. Zerbes},
  \articlehref{http://doi.org/10.1515/crelle.2012.012}{\emph{Wach modules and
  critical slope {$p$}-adic {$L$}-functions}}, J. reine angew. Math.
  \textbf{679} (2012), 181--206. \MR{3065158}

\bibitem[LZ14]{loefflerzerbes14}
\bysame, \articlehref{http://doi.org/10.1142/S1793042114500699}{\emph{Iwasawa
  theory and {$p$}-adic {$L$}-functions over {$\mathbf{Z}_p^2$}-extensions}},
  Int. J. Number Theory \textbf{10} (2014), no.~8, 2045--2096. \MR{3273476}

\bibitem[LZ16]{loefflerzerbes16}
\bysame,
  \articlehref{http://doi.org/10.1186/s40687-016-0077-6}{\emph{Rankin--{E}isenstein
  classes in {C}oleman families}}, Res. Math. Sci. \textbf{3} (2016), no.~29,
  special collection in honour of Robert F.\ Coleman. \MR{3552987}

\bibitem[Och03]{ochiai03}
\textsc{T.~Ochiai}, \articlehref{http://doi.org/10.1353/ajm.2003.0027}{\emph{A
  generalization of the {C}oleman map for {H}ida deformations}}, Amer. J. Math.
  \textbf{125} (2003), no.~4, 849--892. \MR{1993743}

\bibitem[Oht99]{ohta99}
\textsc{M.~Ohta},
  \articlehref{http://doi.org/10.1023/A:1000556212097}{\emph{Ordinary
  {$p$}-adic {\'e}tale cohomology groups attached to towers of elliptic modular
  curves}}, Compos. Math. \textbf{115} (1999), no.~3, 241--301.

\bibitem[PS13]{pollackstevens13}
\textsc{R.~Pollack} and \textsc{G.~Stevens},
  \articlehref{http://doi.org/10.1112/jlms/jds057}{\emph{Critical slope
  {$p$}-adic {$L$}-functions}}, J. London Math. Soc. \textbf{87} (2013), no.~2,
  428--452. \MR{3046279}

\bibitem[Poz19]{pozzi-thesis}
\textsc{A.~Pozzi}, \emph{The {E}igencurve at {W}eight {O}ne {E}isenstein
  {P}oints}, Ph.D. thesis, McGill University, Montr\'eal, 2019. \MR{4197785}

\bibitem[Sha81]{shahidi81}
\textsc{F.~Shahidi}, \articlehref{http://doi.org/10.2307/2374219}{\emph{On
  certain {$L$}-functions}}, Amer. J. Math. \textbf{103} (1981), no.~2,
  297--355. \MR{610479}

\bibitem[Was97]{washington97}
\textsc{L.~C. Washington},
  \articlehref{http://doi.org/10.1007/978-1-4612-1934-7}{\emph{Introduction to
  cyclotomic fields}}, second ed., Graduate Texts in Mathematics, vol.~83,
  Springer, Berlin, 1997. \MR{1421575}

\end{thebibliography}

\end{document}